\def\c{\gamma}
\def\ra{\rightarrow}
\def\lra{\longrightarrow}
\def\.{\cdot}
\def\OO{\mathrm{O}}
\def\nb{\nabla}
\def\beq{\begin{equation}}
\def\eeq{\end{equation}}
\def\bi{\begin{enumerate}}
\def\ei{\end{enumerate}}
\def\bea{\begin{eqnarray*}}
\def\eea{\end{eqnarray*}}
\def\beay{\begin{eqnarray}}
\def\eeay{\end{eqnarray}}
\def\ba{\begin{array}}
\def\ea{\end{array}}
\def\x{\times}
\def\e{\varepsilon}
\def\s{\sigma}
\def\r{\end{proof}}
\def\ot{\otimes}
\def\ci{\mathcal{C}^\infty}
\def\Lb{\Lambda}
\def\g{\gamma}
\def\dg{\dot\gamma}
\def\ddg{\ddot\gamma}
\def\dddg{\dddot\gamma}
\def\te{\theta}
\def\wtl{\widetilde}
\def \R{\mathbb{R}}
\def \T{\mathbb{T}}
\def\End{{\rm End}}
\def\tl{\tilde}
\def\9{[\! (}
\def\0{)\! ]}
\def\[{\pmb{[}}
\def\]{\pmb{]}}
\def\rank{\mathrm{rank}}
\def\dl{{\delta}}
\def\lra{\longrightarrow}
\def\we{\wedge}
\def\d{{\partial}}
\def\Ric{\mathrm{Ric}}
\def\be{\begin{equation}}
\def\ee{\end{equation}}
\def\scal{\rm{Scal}}
\def\tr{\mathrm{tr}}
\def\GL{\mathrm{GL}}
\def\CO{\mathrm{CO}}
\def\SO{\mathrm{SO}}
\def\so{\mathfrak{so}}
\def\co{\mathfrak{co}}
\def\ric{\mathrm{Ric}}
\def\scal{\mathrm{Scal}}
\def\sm{\smallsetminus}
\def\opr{\stackrel{\perp}{\oplus}}
\def\idt{\mathrm{Id}}
\newtheorem{prop}{Proposition}[section]
\newtheorem{cor}[prop]{Corollary}%[chapter]
\newtheorem{lem}[prop]{Lemma}%[chapter]
\newtheorem{thrm}[prop]{Theorem}%[chapter]
\theoremstyle{definition}
\newtheorem{defi}[prop]{Definition}
\theoremstyle{remark}
\newtheorem{rem}[prop]{Remark}
\newtheorem{example}[prop]{Example}
\def\hs{\mathrm{Hess}}
\def\M{\mathcal{M}}
\def\Lp{\mathcal{L}}
\def\mx{\mu}
\def\rs{\rho}
\def\weyl{\mathrm{Weyl}}
\title{Geodesics and submanifold structures 
  in conformal geometry}
\author{Florin Belgun}
\address{Florin Belgun\\ Fachbereich Mathematik\\ Universit{\"a}t
 Hamburg\\ Bundesstr. 55\\ D-20146 Hamburg, Germany}
 \email{florin.belgun@math.uni-hamburg.de}
\address{Institutul de Matematic\u a al Academiei Rom\^ane, P.O. Box
  1--764, RO 014700 Buchrest, Romania}
\begin{document}

\begin{abstract}
A conformal structure on a manifold $M^n$ induces natural second order
conformally invariant operators, called M\"obius and Laplace
structures, acting on specific  weight bundles of $M$, provided that
$n\ge 3$. By extending the notions of M\"obius and Laplace
structures to the case of surfaces and curves, we develop here the
theory of extrinsic conformal geometry for submanifolds, find
tensorial invariants of a conformal embedding, and use these
invariants to characterize various  forms of geodesic submanifolds.
 \bigskip

\noindent
2010 {\it Mathematics Subject Classification}: Primary 53A30, 53A55.

\medskip
\noindent{\it Keywords:} Conformal structure, Laplace
structure, M\"obius structure, Schouten-Weyl tensor, fundamental form,
conformal invariant, geodesic. 
\end{abstract}

\maketitle

\section{Introduction}
The existence of a unique covariant derivative makes differential
calculus, including the concept of totally geodesic submanifolds (or,
more generally, the tensorial invariants of a Riemannian embedding) in
a Riemannian manifold straightforward.

On a conformal
manifold, where there is no such canonical covariant derivative
(there
is, however, a {\em Cartan connection} on an enlarged bundle, for
dimension at least 3, \cite{cap}), a
concept of {\em conformal geodesics} is also given \cite{ferr},
\cite{ga} starting from dimension 3 onwards.

These {\em conformal geodesics} are curves that are solutions of a 3rd order ODE
that depends on the conformal structure alone.

In this paper, we intend to characterize higher-dimensional
submanifolds that fulfil some {\em geodesic} properties in the
conformal setting and describe the geometric properties and invariants
of a conformal embedding.

To make the theory fully general, a first inconsistency of conformal
geometry has to be overcome: indeed, while in dimensions larger than 3
a conformal manifold admits an associated Cartan connection and is,
therefore, {\em rigid}, on curves, a conformal structure means just a
differential structure, and on surfaces, a conformal structure and an
orientation are equivalent to a complex structure -- both are examples
of {\em flexible structures}.

Here, we call a geometric structure on $M$ {\em
  rigid} if on every open set, the dimension of the space of infinitesimal
transformations (vector fields) preserving the given structure is
bounded by a number that depends only on $\dim M$ and the corresponding
structure. Otherwise it is {\em flexible}. Examples of rigid
structures are Riemannian metrics, conformal structures if $\dim M\ge
3$, $CR$ structures and, more generally, all structures that admit a canonical Cartan connection;
symplectic, complex, contact structures are examples of flexible
structures.

Using the concept of a {\em M\"obius
  structure}, defined by D. Calderbank as a linear second order
differential operator of a certain type \cite{ca}, and also using a {\em Laplace structure} (a variant of the {\em conformal
  Laplacian}) to rigidify a curve, we create a setting
conformal-M\"obius-Laplace for which the questions of submanifold
geometry can be studied without conditions on the dimension.

In particular, on a M\"obius
surface or a Laplace curve, the concept of a conformal (or rather
M\"obius, resp. Laplace) geodesic is well-defined, and denotes, as well, the
family of curves that are solutions to a 3rd order ODE. Technically, these equations are given, in terms of a conformal
covariant derivative ({\em Weyl structure}) and of its associated
{\em Schouten-Weyl tensor}, and this tensor is defined, in low
dimensions, precisely by the corresponding additional structure
(M\"obius, resp. Laplace). The invariants of, and induced structures
on  a conformal embedding
are also defined in terms of these Schouten-Weyl tensors, the
distinction between them being the following:
\begin{itemize}\item
An intrinsic structure is one that can be defined and
considered in terms of the submanifold alone, without any refernce to
the embedding: it is the case of the induced conformal, M\"obius and
Laplace structures.
\item An extrinsic kind of structure refers explicitly to (some
  infinitesimal version - like the normal bundle - of) the embedding
  of the submanifold in its conformal (or M\"obius) ambient space: it
  is the case of some tensorial invariants of the embedding and of the
  induced connection on the weightless normal bundle. \end{itemize}
% Moreover, the
% theory of submanifolds in conformal geometry can be extended provided
% that we suppose any conformal surface endowed with a M\"obius structure, and
% any curve endowed with a Laplace structure.

Geometrically, a Laplace structure on a curve is a {\em projective
  structure} \cite{hitch}, \cite{guha} and the global projective
geometry of a closed curve in a conformal (or M\"obius) ambient space
turns out to be a very interesting, and largely unknown problem, as a
forthcoming paper shows \cite{psc}.

After a preliminary section where we recall some basic facts of
conformal geometry (weight bundles, Weyl structures, and curvature
decompositions), with a particular focus on low dimensions, we recall
in Section 3 the definition of the conformal geodesic equation and
define certain tensorial invariants of a conformal embedding.

In the 4th Section, the M\"obius and Laplace structures turn out to
be implicit on a conformal manifold of dimension at least 3, and they
only need to be specified explicitly in low dimensions. On the other
hand, embedded submanifolds in higher-dimensional conformal manifolds
turn out to admit induced M\"obius, resp. Laplace
structures. Relating these induced structures to the implicit
ones (if the submanifold has dimension at least 3) leads again to some
of the tensorial invariants of Section 3.

 Theorem \ref{existt} shows that, besides the particular case
of hypersurfaces, when one of the invariants vanishes identically, as
a consequence of the Gau\ss-Codazzi equations, any given such
tensorial objects on a given manifold and on its normal bundle can be
realized as the invariants of an embedding in some ambient space.

In Section 5, we show that these {\em invariant tensors} of an
embedding turn out to be obstructions for various properties that
generalize, in the conformal context, the {\em totally geodesic
  submanifolds} of Riemannian geometry.

More precisely, a submanifold is called {\em totally umbilic} if it is
totally geodesic for some metric in the conformal class, it is {\em
  weakly geodesic} if it is spanned by conformal geodesics in the
ambient space, and {\em strongly geodesic} if its conformal geodesics
are also conformal geodesics in the ambient space (for dimensions 1 or
2, the conformal structure of the (sub)manifold needs to be completed
(for rigidity) by a Laplace, resp. M\"obius structure).

Finally, these different kinds of geodesic properties of a submanifold
are shown to satisfy some implications (among which the fact that {\em
  strongly geodesic} implies {\em weakly geodesic} turns out to be
non-trivial), and can be characterized by the vanishing of some of the
above mentionned tensorial invariants, Theorem \ref{implic}. 

% As a last remark, we show that a weaker version of a strongly geodesic
% submanifold does not imply that the respective submanifold is totally
% umbilical, 
 % {\bf Acknowledgement:} The research presented here was made possible
 % by the formalism that Paul Gauduchon developped in
 % his unpublished work  \cite{ga}, see also
 % \cite{beg}.

\section{Preliminaries on conformal geometry}
In this section, we review the main notions needed in conformal
geometry. Good references are \cite{beg} and \cite{ga1}, however we need to push some
of the formulas beyond their usual lower bound for the dimension, like
in \cite{ca} (in
particular for the Schouten-Weyl tensor and the normalized scalar
curvature); a reader familiar with the formalism of weight bundles,
Weyl structures, etc., may jump directly to Proposition \ref{curw}.
\subsection{Weight bundles}
Let $M$ be a $m$-dimensional manifold with density bundle
$|\Lambda|M$. This is an oriented line bundle, hence 
trivial, whose positive sections are the {\em volume elements}
of $M$, allowing the integration of functions on the manifold; it is
isomorphic, if $M$ is oriented, with $\Lambda^m M$, the bundle of
$m$-forms on $M$ (the isomorphism depends on the orientation). 

A {\em conformal structure} on $M$ 
is a {\em positive-definite} symmetric bilinear form $c$ on $TM$ with values
in the line bundle $L^2:=L\otimes L$, or, equivalently, a
non-degenerate section $c\in C^\infty(S^2M\otimes L^2)$ (Here we
denote by $S^2M$ the bundle of symmetric bilinear forms on $TM$), with the
following normalization condition: 
$$|\det c|:(\Lb^mTM)^2\ra (L^2)^m$$
is the identity.
% It induces,
% as a bilinear form on the $m$th exterior power of $TM$ with values in
% the $n$th tensor power of $L^2$, the canonical isomorphism
% $$c^\we:(\Lb^mTM)^2\ra (L^2)^m.$$ 
(Note that $(\Lb^mM)^2\simeq (|\Lb|M)^2\simeq L^{2m}$.)

%, even if a (local) isomorphism of $|\Lb|M=L^{-n}:=(L^*)^n$ with $\Lb^nM$ is equivalent to a
%choice of a (local) orientation, the squares of these bundles are always
%canonically isomorphic.

% In this paper, we restrict to the case where $c$ is {\em positive
%   definite} (note that $L$ is canonically oriented, hence {\em
%   positive} weights are well-defined), thus $(M,c)$ is a
% conformal {\em Riemannian} manifold. Some of the facts we discuss here
% do not, however, depend on the signature of $c$.

\begin{rem} Each positive section $l$ of $L$ trivializes it, hence
  $$g_l:=l^{-2}c:TM\otimes TM\ra \R$$
is a Riemannian metric on $M$. If $l':=e^fl$ is another positive
section (for $f:M\ra\R$ a smooth function), then the metric
$g_{l'}=e^{-2f}g_l$ is conformally equivalent to $g_l$, and they belong
to the same conformal class, defined by $c$.
\end{rem}

\begin{rem} Because $L$ is a trivial bundle, not only natural powers
  (defined as multiple tensor products: $L^k=L\ot...\ot L$) or
  negative integer powers ($L^{-1}$ is the dual of $L$ and
  $L^{-k}:=L^*\ot...\ot L^*$) are well-defined, but also {\em real}
  powers of $L$: indeed, the bundle $L^k$, $k\in\R$, is the  bundle associated to the frame
  bundle $\mathrm{GL}(M)$ and the representation $|\det|^{\frac k n}$.\end{rem}
\begin{rem} A% s a Riemannian metric is equivalent to a reduction of the
  % frame bundle $\GL(M)$ (with structure group $\GL(m)$) to $\OO(M)$
  % (with structure group $\OO(m)$), the bundle of adapted orthonomal
  % frames, a 
  conformal structure is equivalent to a reduction of $\GL(m)$ to the
  $\CO(m)=\OO(m)\times\R^*_+$--bundle of conformal frames $\CO(M)$.\end{rem}
{\noindent\bf Convention. } The usual
identifications of vectors and covectors from Riemannian
geometry can be applied in the conformal setting, but it involves a
tensor product with a corresponding weight bundle: $T^*M\simeq TM\ot
L^{-2}$. In general, an irreducible representation of $\CO(m)$ is the
tensor product of an irreducible representation of $\OO(m)$  and one of
$\R^*_+$, the latter being the multiplication by the $k$th power of an
element of $\R_+^*$, where $k\in\R$ is called the {\em conformal
  weight} of the representation. In particular, for the associated
bundles, $TM$ has conformal weight $1$, $T^*M$ has conformal weight
$-1$, a $k$-weighted 
$(r,s)$ tensor bundle $A\subset \otimes^r T^*M\otimes \otimes^s TM\ot L^k$ has weight
$s-r+k$ (in particular, any endomorphism bundle has conformal weight zero).
Two irreducible bundles are isomorphic as $\CO(M)$--bundles if and only
if they are isomorphic as $\OO(M)$--bundles (for any metric in the
conformal class) {\em and} they have the same conformal weight.

\subsection{Weyl structures}
Unlike in (semi-) Riemannian geometry, a conformal manifold does not
carry a canonical affine connection. Instead, there is a family of adapted
connections, the {\em Weyl structures}:

\begin{defi} A Weyl structure $\nb$ on a conformal manifold $(M,c)$ is a
  torsion-free, {\em conformal} connection on $TM$, {\em i.e.} $\nb
  c=0$.
\end{defi}
% \begin{rem} The expression $\nb c$ has the following meaning: $\nb$
%   induces a connection, still denoted by $\nb$ (see, however, the
%   exception below), on all associated bundles to $TM$, in
%   particular on $|\Lambda|M$ and on $L$ itself, therefore $\nb$ induces
%   a connection on $S^2M\otimes L^2$, and $\nb c$ is the covariant
%   derivative of $c$ with respect to this connection.%  Sometimes we will
%   % denote the induced connection on $L$ and its real powers with $\nb^L$
% \end{rem}
The fundamental theorem of conformal geometry can now be stated:
\begin{thrm}\label{fund}\cite{weyl} Let $(M,c)$ be a conformal manifold and
  denote, for a Weyl structure $\nb$, by $\nb^L$ the connection
  induced by $\nb$ on $L$. The correspondence 
$$\left\{\mbox{Weyl structures on $M$}\right\}\lra\left\{\mbox{connections
  on $L$}\right\},$$
given by $\nb\mapsto \nb^L$ is one-to-one.
\end{thrm}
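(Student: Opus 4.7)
The plan is to prove bijectivity of $\nabla \mapsto \nabla^L$ by treating injectivity and surjectivity separately, using an auxiliary metric $g$ in the conformal class to linearise both sides and reduce the problem to algebra on the fibres.

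First I would fix a positive section $l \in C^\infty(L)$, producing the representative metric $g = g_l$, its Levi-Civita connection $\nabla^g$, and the induced connection $\nabla^{g,L}$ on $L$. Every connection on $L$ then has the unique form $\nabla^{g,L} + \theta$ with $\theta \in \Omega^1(M)$, and every candidate Weyl structure differs from $\nabla^g$ by a unique tensor $A \in \Gamma(T^*M \otimes \End(TM))$. The two defining conditions of a Weyl structure translate into pointwise algebraic constraints on $A$: torsion-freeness is equivalent to $A_X Y = A_Y X$ (since $\nabla^g$ is torsion-free), and $\nabla c = 0$ (combined with $\nabla^g c = 0$) forces each $A_X$ into the conformal Lie algebra $\co(T_xM) = \R\cdot\idt \oplus \so(T_xM, g_x)$. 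Moreover the scalar component of $A_X$ records precisely the discrepancy between $\nabla^L$ and $\nabla^{g,L}$, up to a dimensional constant $1/m$ coming from the weight-one nature of $L$.

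For injectivity, two Weyl structures inducing the same connection on $L$ have a difference $A$ whose scalar component vanishes, so each $A_X$ is $g$-skew. Lowering the endomorphism index, the trilinear form $A(X,Y,Z) := g(A_X Y, Z)$ is symmetric in $(X,Y)$ and antisymmetric in $(Y,Z)$; the classical three-sign-flip cycle through all six permutations of the indices then forces $A(X,Y,Z) = -A(X,Y,Z)$, hence $A \equiv 0$.

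For surjectivity, given a prescribed $\theta$ I would write down the ansatz $A_X Y := \theta(X) Y + \theta(Y) X - g(X,Y)\theta^\sharp$ and directly verify that $\nabla^g + A$ is torsion-free (each summand of $A_X Y$ is manifestly symmetric under $X \leftrightarrow Y$), that each $A_X$ lies in $\co(T_xM)$, and that $\tr(A_X) = m\,\theta(X)$, which after the factor $1/m$ recovers exactly the prescribed correction $\theta$ to $\nabla^{g,L}$. The main obstacle I anticipate is purely combinatorial: correctly calibrating the normalization constants in the trace-to-$L$-action dictionary so that the weight conventions match. No conceptual difficulty arises once this bookkeeping is in place, and the injectivity step moreover guarantees that the resulting Weyl structure is independent of the auxiliary choice of $l$, closing the argument.
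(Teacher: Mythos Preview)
Your argument is correct. The paper, however, does not spell out a proof at all: it simply states that the inverse map is given by the \emph{conformal Koszul formula}
\[
2c(\nabla_X Y, Z) = \nabla^L_X(c(Y,Z)) + \nabla^L_Y(c(X,Z)) - \nabla^L_Z(c(X,Y)) + c([X,Y],Z) + c([X,Z],Y) - c([Y,Z],X),
\]
which is the direct conformal analogue of the Riemannian Koszul formula, with the derivatives of the metric replaced by $\nabla^L$-derivatives of the $L^2$-valued form $c$. Your approach linearises around a chosen metric $g = g_l$ and reduces both sides to affine spaces modelled on $\Omega^1(M)$; the explicit ansatz you write down for surjectivity, $A_X Y = \theta(X)Y + \theta(Y)X - g(X,Y)\theta^\sharp$, is exactly the tensor $\tilde\theta_X Y$ that the paper derives in equation~\eqref{e1} as a consequence of the Koszul formula. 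So the two routes converge on the same formula: the paper's is more intrinsic (no auxiliary metric), while yours makes the underlying linear algebra --- in particular the injectivity step via the classical symmetric/antisymmetric six-cycle --- fully explicit.
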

More precisely, the inverse map is given by the following {\em
  conformal Koszul formula} \cite{ca}, \cite{ga1}, \cite{weyl}:
\be\label{kosz}\begin{split}
2c\left(\nb_XY,Z\right) =&
\nb^L_X\left(c(Y,Z)\right)+
\nb^L_Y\left(c(X,Z)\right)-
\nb^L_Z\left(c(X,Y)\right)+\\  
&c\left([X,Y],Z\right)+ 
c\left([X,Z],Y\right)-
c\left([Y,Z],X\right).
\end{split}\ee 
%  This correspondence between Weyl structures and linear connections on
%  the weight bundle $L$ motivates the following definition:

% \begin{defi} A Weyl structure $\nb$ is called {\em closed} if the induced
%   connection $\nb^L$ on $L$ is flat ({\em i.e.} its curvature vanishes
%   identically); it is called {\em exact} if $\nb^L$ is the (trivial)
%   connection induced by a global trivialization of $L$.\end{defi}

% Since a {\em positive} trivialization of $L$ is nothing but a metric
% in the conformal class,
% %(two metrics $g_1,g_2$ are homothetic
% %if one is a constant multiple of the other), 
% we conclude that the exact Weyl structures on $(M,c)$ are precisely
% the Levi-Civita connections of compatible {\em global} metrics, and
% the closed Weyl structures are {\em locally} equal to Levi-Civita
% connections of {\em local} metrics in the conformal class of $c$.

A consequence of Theorem \ref{fund} and
(\ref{kosz}) is the relation between two Weyl structures: as the
difference between two linear connections ${\nb'}^L$ and $\nb^L$  on the line
bundle $L$ is a $1$--form $\te$, the difference between the
corresponding Weyl structures $\nb'$ and $\nb$ must be given by a tensor
$\tilde\te$ that depends linearly on $\te$. More precisely, applying (\ref{kosz})
we get:

\be\label{e1}\nb'_XY-\nb_XY=\tilde\te_XY:=(\te\wedge X)(Y)+\te(X)Y,\ee
where $\te\wedge X$, the wedge product of a $1$--form and a vector, is
the skew-symmetric endomorphism of $TM$ defined by
\be\label{e2}(\te\wedge X)(Y):=\te(Y)X-c(X,Y)\te .\ee
Here, note that the $1$--form $\te$ is a section of $T^*M\simeq
TM\otimes L^{-2}$ and thus $c(X,Y)\te $ is a section of $TM\otimes
L^{-2}\otimes L^2\simeq TM$.

% In the particular case where $\nb=\nb^g$ is the Levi-Civita connection
% of a Riemannian metric $g$, the form $\te$ above is called the {\em
%   Lee form} of $\nb'$ w.r.t. $g$. Note that, if $\nb'$ is the
% Levi-Civita connection of $g'=e^{-2f}g$, $\te=df$. It turns out that
% the Lee form of a fixed Weyl structure $\nb$ changes with an exact form when
% the referring metric changes, therefore $d\te$ depends on $\nb$ alone;
% indeed, $d\te$ is the curvature of $\nb^L$ and is called the {\em
%   Faraday form} of $\nb$). 

\begin{rem}\label{rqtl} The difference tensor $\tilde\te_X$ from
  \eqref{e1} lies in the adjoint bundle $\mathfrak{co}(M)$ of the
  bundle of conformal frames $\CO(M)$, and thus the difference of the
  induced connections by $\nb,\nb'$ satisfying \eqref{e1} on some
  (weighted) tensor bundle $E$ of conformal weight $k\in\R$ is given by
$$\nb'_X\xi-\nb_X\xi=\tilde\te_X\xi=(\te\we X)\xi+k\te(X)\xi,$$
where $(\te\we X)\xi$ is the usual action of skew-symmetric
endomorphisms on a tensor $\xi$, % (disregarding the conformal weight)
and $k\te(X)\xi$ is the Lie algebra action of $\te(X)\idt$ on a
representation of conformal weight $k$.
%  ; this adjoint bundle splits as
%   the sum $\mathfrak{co}(M)=\mathfrak{so}(M)\oplus \idt\cdot\R$ of
%   the bundle $\mathfrak{so}(M)$ of skew-symmetric endomorphisms of
%   $TM$ and the line bundle of multiples of the identity $\idt:TM\ra TM$. 
% The corresponding components of $\tilde\te_X$ are
%   $(\te\wedge X)$ and, respectively $\te(X)\idt$, and they act on a tensor
%   by the Lie algebra action of $\mathfrak{co}(n)$ on the corresponding
%   tensor powers of $\R^n$ and of $(\R^n)^*$. In particular, the
%   action of the skew-symmetric part is the usual $\mathfrak{so}(n)$
%   action and thus all identifications between vectors and co-vectors
%   are allowed (disregarding the conformal weights), the Lie algebra action
%   of $\idt$ on a tensor is the multiplication by its conformal weight.
\end{rem}
For example, for a section $l$ of $L^k$ we have
\beq\label{der}\nb'_Xl-\nb_Xl=k\theta(X)l,
\eeq
% for a $1$--form $\alpha$, we have
% $$\nb'_X\alpha-\nb_X\alpha=\tilde\te_X\alpha=(\te\wedge X)\alpha \mathbf{-}
% \theta(X)\alpha= -\alpha\circ(\te\wedge X)-\theta(X)\alpha,$$
and, for an endomorphism $A:TM\ra TM$, we have
\be\label{end}\nb'_XA-\nb_XA=[\te\wedge X,A],\ee
where the square bracket is the commutator of endomorphisms.%  Note that
% the factor $\te(X)\idt$ acts trivially on the weightless tensor $A$
.

\subsection{Curvature}
The curvature of a Weyl structure $\nb$ is defined by 
$$R^\nb_{X,Y}Z:=\nb_X(\nb_YZ)-\nb_Y(\nb_XZ)-\nb_{[X,Y]}Z,$$
and can be seen as a $2$--form with values in $\mathfrak{co}(M)$. The
identity component of this 2--form is the {\em Faraday}
form $F^\nb\in\Lambda^2M$, which is the curvature of the connection
$\nb^L$, more precisely
$$R^\nb_{X,Y}=(R^\nb_{X,Y})^{skew}+F^\nb \otimes \idt.$$
$R^\nb$ satisfies the Bianchi identities (tensorial and differential), 
the tensorial (or the {\em first}) Bianchi identity being: 
\be\label{bi1}R^\nb_{X,Y}Z+R^\nb_{Y,Z}X+R^\nb_{Z,X}Y=0,\ \forall
X,Y,Z\in TM.\ee 
\begin{defi} The {\em suspension} of a bilinear form
$A\in T^*M\otimes T^*M$ by the identity is a $2$--form $A\we\idt$ with
values in $\so(M)$, given by
\be\label{susp}(A\wedge
\idt)_{X,Y}:=A(Y,\cdot)\wedge X-A(X,\cdot)\wedge Y,\ee
where we note, as in (\ref{e2}), that the wedge product between a
vector and a 1--form (here $A(Y,\cdot)$) is a skew-symmetric
endomorphism.\end{defi} 
The tensorial Bianchi identity \eqref{bi1} is satisfied by the tensor 
\be\label{tF}\tilde F^\nb:=-\frac 12 F^\nb\wedge \idt + F^\nb\otimes \idt.\ee
If we define the {\em Riemannian component} of $R^\nb$ as
$$R^{\nb,Riem}_{X,Y}:=(R^\nb_{X,Y})^{skew}+\frac12F^\nb\wedge\idt,$$
then it is a $2$-form with values in $\mathfrak{so}(M)$ that satisfies
the first Bianchi identity (like a Riemannian curvature tensor).

\begin{rem}\label{kersus} The linear map
$$\End(TM)\ni A\longmapsto A\we\idt\in\Lb^2M\ot\so(M)$$
is injective for $n\ge 3$, and its kernel consists of all trace-free
endomorphisms if $n=2$. On the other hand, every tensor in
$\Lb^2M\ot\co(M)$ satisfies the Bianchi identity for $n=2$, hence the
correction term $-\frac{1}{2}F^\nb\we\idt$ (which  is actually zero) is not
needed there.\end{rem}

% The curvature $R^\nb$ decomposes thus in two components, both of which
% satisfy the tensorial Bianchi identity (\ref{bi1}):
% $$R^\nb=R^{\nb,Riem}+\tilde F^\nb.$$
% $R^{\nb,Riem}$ is a 2--form with values in $\mathfrak{so}(M)$, the bundle of
% skew-symmetric endomorphisms, and satisfies the Bianchi identity
% (\ref{bi1}), therefore it is a Riemannian curvature tensor.

In general, the {\em Ricci contraction} of a curvature tensor is the
map
$$\ric:\Lb^2M\ot\co(M)\ra\End(TM),$$
$$\ric(R)(X,Y):=\tr\left(R_{\cdot,X}Y\right),$$
and the {\em Ricci tensor} of $\nb$ is $\ric^\nb:=\ric(R^\nb)$. 

A straightforward computation shows
\be\label{cric} \ric(A\we\idt)=(n-2)A+\tr_c A\cdot c.\ee
This implies
$$\ric(\tl F^\nb)=-\frac{n-2}2F^\nb-F^\nb=-\frac{n}2F^\nb.$$
On the other hand, the Ricci contraction, applied to the Riemannian
component $R^{\nb,Riem}$ of $R^\nb$, produces a symmetric tensor 
$$\ric^{\nb,s}:=\ric(R^{\nb,Riem})\in S^2M.$$
Therefore, the Ricci tensor $\ric^\nb:=\ric(R^\nb)$ of a Weyl
structure $\nb$ has a skew-symmetric part, equal to $-n/2F^\nb$, and a
symmetric part, equal to
$\ric^{\nb,s}=\ric(R^{\nb,Riem})$. % (\ref{cric}) also implies that the
% other contraction  
% $$\tl\ric(R)(X,Y):=\tr_c\left( Z\mapsto c\left(R_{X,Z}Z,Y\right)\right),$$
% which also defines the Ricci tensor in Riemannian geometry, yields, for
% $R=R^\nb$ as above, $$\tl\ric^\nb=\ric^{\nb,s}+\frac{4-n}{2}F^\nb,$$
% hence $\ric^\nb$ contains informations on both $ \ric^{\nb,s}$ and
% $F^\nb$, while $\tl\ric^\nb$ is always symmetric in dimension 4.

THe relation (\ref{cric}) also implies that, if $n\ge 3$, then, for every given
bilinear form $A\in \otimes^2M$, there exists another bilinear form
$h(A)\in \otimes^2M$ such that $\ric(h(A)\wedge \idt)=A$. 

Straightforward computations show that the linear map
$h:\otimes^2M\ra\otimes^2M$ has the expression
\be\label{h}h(A)=\frac1{n-2}A^s_0+\frac1{2n(n-1)}\tr_cA\cdot
c-\frac12A^{skew},\ee
where $A^s_0\in S^2_0M$ is the symmetric, trace-free part of $A$, $\tr
A\in L^{-2}$ is the trace of $A$ w.r.t. $c$, and $A^{skew}\in \Lb^2(M)$
is the skew-symmetric part of $A$. 
\begin{defi} 
The {\em scalar curvature} of an Weyl structure $\nb$ on the conformal
manifold $(M,c)$ is the density of weight $-2$
\be\label{defsc}\scal^\nb:=\tr_c(\ric^\nb),\ee 
given by the trace (with respect to $c$)
of the Ricci tensor of $\nb$.

If $n:=\dim M\ge 3$, the {\em Schouten-Weyl tensor} $h^\nb$ of a Weyl
  structure $\nb$ on $(M,c)$ is the tensor
  \be\label{defsch}h^\nb:=h(\ric^\nb)\in\otimes^2M.\ee
 Its symmetric part
  $h^{\nb,s}:=h(\ric^{\nb,s})$ is called the symmetric Schouten-Weyl
  tensor of $\nb$. 
\end{defi}
In particular, for $\nb$ the Levi-Civita connection of the metric $g$,
we have the well-known formula
\be\label{RSg}h^g=\frac1{n-2}\ric^g_0+\frac1{2n(n-1)}\scal^g\cdot
g.\ee
\begin{rem}\label{huniq} From Remark \ref{kersus} it follows that, even if the
  Schouten-Weyl tensor on $\nb$ is only defined if $n\ge 3$, there
  always exist bilinear forms $h\in\ot^2M$ such that $\ric(h\we
  \idt)=\Ric^\nb$: for $n\ge 3$ this requires $h$ to be the Schouten-Weyl tensor
  $h^\nb$ as defined in \eqref{defsch}; for $n\ge 2$ the skew-symmetric part of $h$ has to be
  $-\frac12F^\nb$ and the pure trace part has to be
  $\frac14\scal^\nb\cdot c\in S^2M$
  (the trace-free part of $h$ is undetermined if $n=2$); and for $n=1$ $h$ is
  an undetermined function.  We can therefore define 
\be\label{defsig}\sigma^\nb:=\frac{1}{2(n-1)}\scal^\nb\in L^{-2}\ee
as the {\em normalized scalar curvature}, for $n\ge 2$, and $\frac1n\sigma^\nb\cdot c$ is the pure trace part of any
of the tensors $h$ above (in particular, if $n\ge 3$, $\sigma^\nb$ is
the
trace of the Schouten-Weyl tensor  $h^\nb$) .\end{rem} 
The Schouten-Weyl tensor is sometimes called the normalized Ricci
tensor; it is equal to $k$ times the metric on every Riemannian
manifold of constant sectional curvature $k$.

% $$R^\nb=R^{\nb,Riem}+\tilde F^\nb=W^\nb+\rho^\nb\wedge \idt+\tilde F^\nb,$$
% where $\rho^\nb$ is a symmetric bilinear form on $TM$, called the {\em
% symmetric Schouten-Weyl tensor} of $\nb$,
% %and $W$ is the
% % trace-free part of $R^{\nb,Riem}$ and is called the
% % {\em Weyl tensor} of $(M,c)$ (that it depends only on $c$ will become
% % clear in the Proposition \ref{curch} below),
% defined by
% $$\rho^\nb:=\frac{1}{n-2}\ric^{\nb,s}_0+\frac{1}{2n(n-1)}\scal^\nb c.$$
% The full {\em Schouten-Weyl tensor} of $\nb$, $h^\nb$ is defined by
% \be\label{schw}
% h^\nb=\frac{1}{n-2}\ric^{\nb,s}_0+\frac{1}{2n(n-1)}\scal^\nb
% c-\frac{1}{2}F^\nb.
% \ee
% Here the {\em scalar curvature} $\scal^\nb$ (which is not a scalar function,
% but a density of weight $-2$) of a Weyl structure $\nb$ is defined to
% be the trace (with respect to $c$) of the Ricci tensor: 
% $$\scal^\nb:=\tr_c (\ric^\nb),$$
%  and is a section in $L^{-2}$. The trace of
% $h^\nb$, denoted by $\s^\nb$, is equal to $(n-1)\s^\nb$, and is
% therefore well-defined also for $n=2$. We denote by $h^\nb_0$ and
% $\rho^\nb_0$ the trace-free parts of $h^\nb$, resp. $\rho^\nb$.

% Recall that if
% $F^\nb=0$, and thus $\nb$ is the Levi-Civita connection of a (local)
% metric $g$, then $R^\nb=R^{Riem}$ and, in this case, $\rho^\nb$ is the
% usual Schouten-Weyl 
% tensor of $g$, $\scal^\nb=\scal^g$ is the usual scalar curvature (we
% trivialize $L$ by $l_g$). If $\nb$ is a general 
% Weyl structure, we have:
After substracting from $R^{\nb,Riem}$ the suspension of $h^{\nb,s}$,
we obtain a tensor $W^\nb\in\Lb^2M\ot\so(M)$ that satisfies the Bianchi
identity, and is trace-free (up to a constant, the only non-trivial
contraction (or trace) on the space of the Riemannian curvature
tensors is the Ricci contraction). This tensor is called the {\em Weyl
  tensor} of $\nb$.
 
The following decomposition is a direct consequence of \eqref{cric}
and of the definition above:
\begin{prop}\label{curw} The curvature $R^\nb$ of a Weyl structure
  $\nb$ on a conformal manifold $(M,c)$ of dimension $n\ge 3$ decomposes as
\be\label{curvdec}R^\nb=h^\nb\we \idt + W+F^\nb\ot \idt,\ee
where % $$R^\nb=R^{\nb,Riem}+\tl{F^\nb}=\left(\rho^\nb\we \idt + W^\nb\right)
% +\left(\frac{1}{2}F^\nb\we \idt+F^\nb\ot \idt\right),$$
% where 
$W^\nb$ is the Weyl tensor, $F^\nb$ is the Faraday $2$-form, and
% $\rho^\nb$ is the symmetric Schouten-Weyl tensor. Equivalently,
$$h^\nb=h^{\nb,s}-\frac{1}{2}F^\nb$$
 is the full Schouten-Weyl tensor of $\nb$.
\end{prop}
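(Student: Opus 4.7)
The proposition is essentially a bookkeeping statement: all the pieces of the decomposition have already been introduced, and the plan is to assemble them from two orthogonal directions, namely the algebraic splitting of the fibres of $\mathfrak{co}(M)$ and the Ricci contraction analysis.

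First I would use the splitting $\mathfrak{co}(M)=\mathfrak{so}(M)\oplus\mathbb{R}\cdot\idt$ of the adjoint bundle to write the tautological decomposition
$$R^\nb=(R^\nb)^{skew}+F^\nb\otimes\idt,$$
which is just the definition of the Faraday form together with the isolation of the scalar component. Next, I would substitute the definition
$R^{\nb,Riem}=(R^\nb)^{skew}+\tfrac12 F^\nb\wedge\idt$ to re-express the skew part: $(R^\nb)^{skew}=R^{\nb,Riem}-\tfrac12 F^\nb\wedge\idt$. At this point $R^{\nb,Riem}$ lives in $\Lambda^2M\otimes\mathfrak{so}(M)$ and satisfies the first Bianchi identity, so it is an ordinary algebraic Riemannian curvature tensor on the fibres.

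The core step is now the definition of the Weyl component: set $W^\nb:=R^{\nb,Riem}-h^{\nb,s}\wedge\idt$. I have to check that this makes sense and that $W^\nb$ has vanishing Ricci contraction. By the definition $h^{\nb,s}=h(\ric^{\nb,s})$ and formula \eqref{cric}, $\ric(h^{\nb,s}\wedge\idt)=(n-2)h^{\nb,s}+\tr_c h^{\nb,s}\cdot c$; plugging in the explicit form \eqref{h} of $h$ applied to the symmetric tensor $\ric^{\nb,s}$, a short trace computation recovers $\ric^{\nb,s}$ exactly (the trace-free part and the pure trace part come out with the right coefficients, this is the content of choosing the coefficients $\tfrac{1}{n-2}$ and $\tfrac{1}{2n(n-1)}$ in \eqref{h}, and it is the only place where $n\ge3$ is needed). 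Hence $\ric(W^\nb)=\ric^{\nb,s}-\ric^{\nb,s}=0$, and the first Bianchi identity for $W^\nb$ follows from that of $R^{\nb,Riem}$ and of the suspension $h^{\nb,s}\wedge\idt$.

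Assembling the substitutions yields
$$R^\nb=W^\nb+h^{\nb,s}\wedge\idt-\tfrac12 F^\nb\wedge\idt+F^\nb\otimes\idt=W^\nb+\bigl(h^{\nb,s}-\tfrac12 F^\nb\bigr)\wedge\idt+F^\nb\otimes\idt,$$
and invoking the identification $h^\nb=h^{\nb,s}-\tfrac12 F^\nb$ from Remark \ref{huniq} gives exactly \eqref{curvdec}. The only genuine obstacle is verifying by hand that $h$ as given in \eqref{h} inverts $A\mapsto\ric(A\wedge\idt)$ on the symmetric sector; everything else is the rearrangement of definitions already in place.
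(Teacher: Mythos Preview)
Your argument is correct and is exactly the approach the paper takes: the paper simply states that the proposition is ``a direct consequence of \eqref{cric} and of the definition above'' (namely $W^\nb:=R^{\nb,Riem}-h^{\nb,s}\wedge\idt$), and what you have written is precisely the unpacking of that sentence. There is nothing to add.
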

Note that $W=0$ if $n=3$ by dimension reasons. 

If $n=2$, the curvature decomposition is
even simpler, since $\Lb^2M\ot\co(M)$ is 2-dimensional (and
automatically satisfies the Bianchi identity):
$$R^\nb=\frac12\s^\nb\cdot c\we\idt+F^\nb\ot\idt,$$
where $\s^\nb=\frac12\scal^\nb$ is a section of $L^{-2}$.

We conclude that the curvature
tensor $R^\nb$ of a conformal manifold $(M,c)$ of dimension $n$ is determined by\bi
\item its Faraday curvature $F^\nb\in\Lb^2M$ (for $n\ge 2$)
\item its scalar curvature $\scal^\nb:=tr_c\ric^\nb\in L^{-2}$, or,
  equivalently, its normalized scalar curvature
  $\s^\nb=\frac1{2(n-1)}\scal^\nb$ (for $n\ge 2$)
\item its trace-free symmetric Ricci tensor $\ric^{\nb,s}_0\in S^2_0M$
  (for $n\ge 3$)
\item its trace-free part $W^\nb$, the Weyl tensor (for $n\ge 4$).\ei
All these components are sections in $\CO(n)$-irreducible vector
bundles. 

% A Weyl manifold
% $(M,c,\nb)$ of dimension $n\ge 3$ is {\em Einstein-Weyl} iff the
% symmetric trace-free part of $h^\nb$ (or of $\rho^\nb$, or of
% $\ric^\nb$) vanishes. Note that (\ref{tF}) implies that the
% skew-symmetric part of $h^\nb$ is $\frac{1}{2}F^\nb$. 

We give now the transformation rule for the curvature tensors
corresponding to two Weyl structures, more precisely, the
transformation rules for their $\CO(n)$ irreducible components in the
list above:

%% In our more general, conformal setting, we define

%% $$Ric^\nb(X,Y):=\tr \left(R^\nb_{\cdot,X}Y\right),\ Scal^\nb:=\sum_{i=1}^n
%% Ric(e_i,e_i)c,$$ 
%% where $\{e_i\}$ is a conformal basis ({\em i.e.} 
\begin{prop}\label{curch}
For two Weyl structures $\nb'=\nb+\tilde\theta$ on a conformal
manifold $(M,c)$ of dimension $n\ge 3$, the corresponding
Schouten-Weyl tensors are related by:
\be\label{scht}h^{\nb'}-h^\nb=-\nb\theta
+\theta\otimes\theta-\frac{1}{2}c(\theta,\theta)c.\ee 
Moreover, the Weyl tensor $W$ is independent on the Weyl structure and
depends on the conformal structure only. The Faraday curvature changes
as follows:
$$F^{\nb'}=F^\nb+d\te.$$
\end{prop}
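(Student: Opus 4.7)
The plan is to compute $R^{\nb'}-R^\nb$ directly from the change rule $\nb'_XY=\nb_XY+\tilde\theta_XY$ of \eqref{e1}--\eqref{e2}, and then to read off the three claims by projecting onto the $\CO(n)$--irreducible summands of the decomposition of Proposition \ref{curw}. The Faraday statement is the easy piece: Theorem \ref{fund} together with \eqref{der} shows that ${\nb'}^L=\nb^L+\theta$ on the line bundle $L$, and since $L$ has rank one the curvature of any connection on it changes by the exterior derivative of the difference $1$-form, so $F^{\nb'}=F^\nb+d\theta$ follows at once.

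For the full tensor I would use the standard curvature expansion
\[R^{\nb'}_{X,Y}-R^\nb_{X,Y}=(\nb_X\tilde\theta)_Y-(\nb_Y\tilde\theta)_X+[\tilde\theta_X,\tilde\theta_Y],\]
and substitute $\tilde\theta_X=\theta\wedge X+\theta(X)\idt$ from \eqref{e1}. The covariant-derivative piece reduces, by a short bookkeeping using $\nb c=0$, to $(\nb_X\theta)\wedge Y-(\nb_Y\theta)\wedge X+[(\nb_X\theta)(Y)-(\nb_Y\theta)(X)]\idt$; the first two terms combine into $-(\nb\theta\wedge\idt)_{X,Y}$ by \eqref{susp}, while the torsion-freeness of $\nb$ turns the last pair into $d\theta(X,Y)\idt$. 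In the commutator the scalar summands $\theta(X)\idt$ commute with everything and drop out, leaving only $[\theta\wedge X,\theta\wedge Y]$; using the identity $(\theta\wedge X)(\theta)=c(\theta,\theta)X-\theta(X)\theta$ from \eqref{e2}, a direct expansion of the six cross terms collapses this commutator to the suspension $(B\wedge\idt)_{X,Y}$ of the bilinear form $B:=\theta\otimes\theta-\tfrac12 c(\theta,\theta)c$. Summing,
\[R^{\nb'}-R^\nb=\left(-\nb\theta+\theta\otimes\theta-\tfrac12 c(\theta,\theta)c\right)\wedge\idt+d\theta\otimes\idt.\]

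Matching this against $R^\nb=h^\nb\wedge\idt+W^\nb+F^\nb\otimes\idt$ from Proposition \ref{curw} settles all three claims. The right-hand side contains no summand in the Weyl direction, so $W^{\nb'}=W^\nb$; the $\idt$-component reproduces the Faraday formula already derived; and the suspension component, combined with the injectivity of $A\mapsto A\wedge\idt$ for $n\ge 3$ from Remark \ref{kersus}, yields \eqref{scht}. The only genuinely calculational step is the identification of $[\theta\wedge X,\theta\wedge Y]$ with the suspension of $B$: one must organize the six quadratic cross terms into a pure-square piece $\theta\otimes\theta$ and a pure-trace piece $-\tfrac12 c(\theta,\theta)c$, and this algebraic grouping is what pins down the specific coefficient $-\tfrac12$ in \eqref{scht}.
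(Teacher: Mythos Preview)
Your proof is correct and follows essentially the same route as the paper's: both compute $R^{\nb'}-R^\nb$ via the standard expansion, reduce the commutator $[\tilde\theta_X,\tilde\theta_Y]$ to $[\theta\wedge X,\theta\wedge Y]$ and identify it as the suspension of $\theta\otimes\theta-\tfrac12 c(\theta,\theta)c$, then read off the three claims from the decomposition of Proposition~\ref{curw}. Your explicit appeal to the injectivity of $A\mapsto A\wedge\idt$ for $n\ge 3$ (Remark~\ref{kersus}) to extract \eqref{scht}, and your separate line-bundle argument for the Faraday formula, are small refinements over the paper's presentation but not a different approach.
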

\begin{proof}
Let us compute the curvature of $\nb'$ by deriving (\ref{e1}), and
using (\ref{end}) at a point where the $\nb$--derivatives of the
involved vector fields vanish:
\be\label{11}\nb'_X\nb'_YZ=\nb_X\nb_YZ+(\nb_X\theta\wedge
Y)(Z)+(\tl\theta_X)\circ(\tl\theta_Y)(Z)+(\nb_X\theta)(Y)Z,\ee 
Therefore
$$R^{\nb'}_{X,Y}Z-R^\nb_{X,Y}Z=(\nb_X\te\we Y)(Z)-(\nb\te_Y\te\we
X)(Z)+d\te(X,Y)Z+[\tl\te_X,\tl\te_Y](Z).$$ 
Note that $[\tl\te_X,\tl\te_Y]=[\theta\we X,\theta\we Y]$, hence we get
$$R^{\nb'}_{X,Y}Z=R^\nb_{X,Y}Z-(\nb\theta\wedge \idt)_{X,Y}Z+[\theta\wedge
  X,\theta\wedge Y](Z)+d\theta(X,Y)Z.$$
We compute directly
\bea[\theta\wedge X,\theta\wedge Y]&=&(\theta\ot\theta)(Y)\wedge
X-(\theta\ot\theta)(X)\wedge Y+c(\theta,\theta)X\wedge Y\\
&=&\left(((\theta\ot\theta)\wedge
\idt)_{X,Y}-\frac{1}{2}c(\theta,\theta)(c\wedge \idt)_{X,Y}\right),\eea
that implies
$$F^{\nb'}=F^\nb+d\theta,\quad W^{\nb'}=W^\nb$$
and the result claimed in \eqref{scht}.
\end{proof}
\begin{cor}\label{scalcf} Let $\nb,\nb'$ be Weyl structures on the
  conformal manifold $(M,c)$ of dimension at least $3$, such that
  \eqref{e1} holds. The relations between the trace-free parts,
  resp. the traces of the Schouten-Weyl tensors of $\nb$ and $ \nb'$ are:
\beay h^{\nb'}_0&=&h^\nb_0-(\nb\te)_0+(\te\ot\te)_0,\label{h0}\\
\s^{\nb'}&=&\s^\nb+\dl^\nb\te+\frac{2-n}{2}c(\te,\te).\label{s0}\eeay
Here $\dl^\nb\te:=-\tr_c\nb\te=-\sum_{i=1}^n(\nb_{e_i}\te)(e_i)$, for $\{e_i\}$ an
$c$-orthonormal basis of $TM$.
\end{cor}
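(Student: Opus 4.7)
The plan is to derive both identities by decomposing the full Schouten-Weyl transformation rule \eqref{scht} of Proposition~\ref{curch} into its trace-free symmetric part and its $c$-trace, since by construction these are exactly $h^\nb_0$ and $\s^\nb$. The remaining skew-symmetric component $-\frac{1}{2}F^\nb$ of the full Schouten-Weyl tensor is already accounted for by the Faraday rule $F^{\nb'}=F^\nb+d\te$ in the same proposition, so the two equations \eqref{h0}--\eqref{s0} truly are the ``missing pieces''. As preparation I would record the decomposition $h^\nb=h^\nb_0+\frac{1}{n}\s^\nb\cdot c-\frac{1}{2}F^\nb$ implicit in \eqref{h} and Remark~\ref{huniq}, noting in particular that $\tr_c h^\nb=\s^\nb$ because $\tr_c F^\nb=0$.

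For \eqref{h0}, I would apply the trace-free symmetric projection $(\cdot)_0$ to both sides of \eqref{scht}. On the left this gives $h^{\nb'}_0-h^\nb_0$ by the decomposition above. On the right, the pure-trace summand $-\frac{1}{2}c(\te,\te)c$ projects to zero; the bilinear form $\te\ot\te$ is already symmetric and contributes $(\te\ot\te)_0$; and $\nb\te$ contributes $-(\nb\te)_0$. This reads off \eqref{h0} directly.

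For \eqref{s0}, I would take $\tr_c$ of \eqref{scht}. The left-hand side becomes $\s^{\nb'}-\s^\nb$. On the right, the definition $\dl^\nb\te:=-\tr_c\nb\te$ handles the first term, $\tr_c(\te\ot\te)=c(\te,\te)$ the second, and $\tr_c c=n$ combined with the coefficient $-\frac{1}{2}c(\te,\te)$ produces $-\frac{n}{2}c(\te,\te)$; summing the three contributions yields $\dl^\nb\te+\frac{2-n}{2}c(\te,\te)$ as claimed. I do not anticipate any serious obstacle: the whole argument is a projection of the already-established identity \eqref{scht} onto the two $\CO(n)$-irreducible pieces of $\otimes^2 M$ that support $h^\nb_0$ and $\s^\nb$, and the only small care to exercise is to verify that the skew part of $h^{\nb'}-h^\nb$ extracted from \eqref{scht} (namely $-\frac{1}{2}d\te$) is consistent with the Faraday rule, which it is.
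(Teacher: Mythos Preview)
Your proposal is correct and is exactly the argument the paper intends: the corollary is stated without proof precisely because it is the trace-free and trace projections of the already-established identity \eqref{scht}, and you have carried out those two projections accurately.
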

\begin{rem}\label{RS} 
Proposition \ref{curch} holds regardless of the dimension $n$ of $M$
in the following sense: {\em assuming} that the curvature tensors $R^\nb$ has the expression
(\ref{curvdec}), for {\em some} tensor $h^\nb$ (of  Schouten-Weyl type), then
$R^{\nb'}$ has also an expression (\ref{curvdec}), with $h^{\nb'}$
given by \eqref{scht}. As mentioned in Remark \ref{huniq}, if $n=2$ only the
skew-symmetric part, and the pure trace part of $h^\nb$ are determined
by the Ricci tensor $\ric^\nb$. The transformation rule \eqref{s0} is,
thus, also valid for $n=2$.

Note, however, that even if the curvature of a Weyl structure does neither
define a tensor of type $h^\nb_0$ in $S^2_0M$ for $n= 2$ nor a density
of type $\s^\nb$ in $L^{-2}$ for
$n=1$, these bundles are not zero themselves. We will see that some
extra geometric structures on $(M,c)$ induce, for all Weyl structures
$\nb$, sections $h^\nb_0$, resp. $\s^\nb$ in these bundles, such that
the transformation rules from Corollary \ref{scalcf} hold.

Indeed, following D. Calderbank \cite{ca}, we introduce the 
{\em M\"obius differential operator} on a conformal manifold:
\end{rem}

% the Schouten-Weyl tensor is only defined by the
%   curvature decomposition for $\dim M\ge 3$, the curvature itself, and
%   the Ricci tensor in particular, are well-defined in all dimensions,
%   and we have
% $$h^\nb=\frac{1}{n-2}Sym_0(Ric^\nb)+\frac{1}{2n(n-1)}Scal^\nb\cdot
% c-\frac{1}{2}F^\nb,$$
% where the Ricci tensor is defined by the usual trace of the curvature:
% $$\Ric^\nb(X,Y):=\tr (R^\nb_{\cdot,X}Y).$$
% Equivalently,
% $$\Ric^\nb=(n-2)\rho^\nb+\tr_c\rho^\nb\cdot c -\frac{n}{2}F^\nb,$$
% which also implies
% $$\scal^\nb=2(n-1)\tr_c\rho^\nb=2(n-1)\tr_c h^\nb.$$
% \end{rem}
% Note that, for $\nb$ a general Weyl structure, it is important which
% trace of the curvature we consider: the other trace (in the arguments
% corresponding to the vectors $X$ and $Y$ in the above expression) 
% is a related Ricci-like tensor:
% $$\wtl{\Ric^\nb}(X,Y):=\tr_c \left(c(R^\nb_{X,\cdot}\cdot,Y)\right),$$
% which has the same symmetric part, but the skew-symmetric part
% changes: 
% \be\label{tlric}\wtl{\Ric^\nb}=\Ric^\nb-2F^\nb.\ee
% Since the skew part of $\Ric^\nb$ is $-(n/2) F^\nb$, it means that
% $\wtl{\Ric^\nb}$ is {\em symmetric} if $n=4$.

%% In our more general, conformal setting, we define

%% $$Ric^\nb(X,Y):=\tr \left(R^\nb_{\cdot,X}Y\right),\ Scal^\nb:=\sum_{i=1}^n
%% Ric(e_i,e_i)c,$$ 
%% where $\{e_i\}$ is a conformal basis ({\em i.e.} 

% Let us denote by $h^\nb_0\in S^2_0M$ the trace-free symmetric part (the first
% term in the decomposition (\ref{RS})) of $h^\nb$ and by $\sigma^\nb\in L^{-2}$ 
% the conformal trace of $h^\nb$, such that 
% $$h^\nb=h^\nb_0+\frac{1}{n}\sigma^\nb\cdot c.$$

\section{M\"obius and Laplace structures on conformal manifolds}

% A Weyl manifold
% $(M,c,\nb)$ is {\em Einstein-Weyl} iff the symmetric trace-free part
% of $h^\nb$ vanishes. Note that (\ref{tF}) implies that the
% skew-symmetric part of $h^\nb$ is $\frac{1}{2}F^\nb$. The trace of $h^\nb$ 
% corresponds to a multiple of the scalar curvature in Riemannian
% geometry, and is a section in $L^{-2}$.

\begin{prop}\label{mobex}\cite{beg}, \cite{bc}, \cite{ca}, \cite{ga}. Let $(M,c)$ be
  a conformal manifold of dimension $m\ge 3$, and let $\hs_0^\nb$,
  $h_0^\nb$ be trace-free Hessian, resp. trace-free Schouten tensor of
  a Weyl structure $\nb$. Then the second order differential operator
  $\M^\nb:C^\infty(L)\ra C^\infty(S^2_0M\otimes L)$  defined by
\beq\label{cm}\M^\nb_{(X,Y)}l:=\hs^\nb_0(X,Y)l+h^{\nb,s}_0(X,Y)l,\eeq
is independent of $\nb$. Here $h^{\nb,s}_0$ is the symmetric
trace-free Schouten-Weyl tensor of $\nb$.
\end{prop}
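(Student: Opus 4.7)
The plan is to check directly that $\M^{\nb'}l = \M^\nb l$ for any two Weyl structures related by $\nb' = \nb + \tilde\theta$, with $\theta \in \Gamma(T^*M)$ a $1$-form. By \eqref{der} applied to $L$ (weight $k=1$) and by \eqref{e1}-\eqref{e2}, I have the elementary change formulas
\[
\nb'_X l = \nb_X l + \theta(X)\,l, \qquad \nb'_X Y = \nb_X Y + \theta(Y)X + \theta(X)Y - c(X,Y)\,\theta^\sharp,
\]
and I would substitute these into $\hs^{\nb'}(X,Y)\,l = \nb'_X\nb'_Y l - \nb'_{\nb'_X Y} l$ and simplify. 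After cancellation of the mixed $\theta(X)\nb_Y l$ and $\theta(Y)\nb_X l$ terms, a straightforward expansion yields
\[
\hs^{\nb'}(X,Y)\,l - \hs^\nb(X,Y)\,l = (\nb_X\theta)(Y)\,l - \theta(X)\theta(Y)\,l + c(X,Y)\bigl[\nb_{\theta^\sharp}l + c(\theta,\theta)\,l\bigr].
\]

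The term multiplied by $c(X,Y)$ is pure trace and is annihilated by the symmetric trace-free projection $(\cdot)^s_0$, so
\[
\hs^{\nb'}_0 l - \hs^\nb_0 l = \bigl[(\nb\theta)^s\bigr]_0\,l - (\theta\otimes\theta)_0\,l.
\]
On the Schouten-Weyl side, Proposition \ref{curch} gives $h^{\nb'} - h^\nb = -\nb\theta + \theta\otimes\theta - \tfrac12 c(\theta,\theta)\,c$; taking the symmetric trace-free part (the $c(\theta,\theta)c$ piece again drops) produces
\[
h^{\nb',s}_0 - h^{\nb,s}_0 = -\bigl[(\nb\theta)^s\bigr]_0 + (\theta\otimes\theta)_0,
\]
and the two differences cancel exactly. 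Summing establishes $\M^{\nb'}l = \M^\nb l$.

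The main obstacle is the bookkeeping in the first step: one must keep careful track of conformal weights (so that $\nb'_X$ acts correctly on functions such as $\theta(Y)$, on $l \in L$, and on $\nb_Y l \in L$) and then identify precisely which quantities collapse into the pure-trace $c(X,Y)$-multiple. Once the formula for $\hs^{\nb'}l - \hs^\nb l$ is correctly isolated, the cancellation with the transformation law of $h^{\nb,s}_0$ from Corollary \ref{scalcf} is automatic. Note that the skew part of $\hs^\nb l$ (which, via $\nb_X\nb_Y - \nb_Y\nb_X - \nb_{[X,Y]}$ on $L$, equals $\tfrac12 F^\nb \otimes l$) is irrelevant here, since $\M^\nb$ takes values in $S^2_0 M \otimes L$; this is consistent with $F^{\nb'} - F^\nb = d\theta$ accounting for the skew part of the difference in $\nb^2 l$.
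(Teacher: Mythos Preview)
Your proof is correct and follows essentially the same approach as the paper: both compute the change of the Hessian under $\nb\mapsto\nb'=\nb+\tilde\theta$, take the trace-free symmetric part, and cancel it against the transformation law \eqref{scht} for $h^{\nb,s}_0$. The only cosmetic difference is that the paper records the Hessian variation for arbitrary weight $k$ (equation \eqref{hess}) and then specializes to $k=1$, whereas you work directly at $k=1$; your displayed formula for $\hs^{\nb'}l-\hs^\nb l$ is exactly \eqref{hess} with $k=1$.
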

\begin{proof}
Using \eqref{der} we easily get 
\beq\label{hess}\begin{split}
\hs^{\nb'}(X,Y)l-&\hs^\nb(X,Y)l=(k-1)[\theta(X)\nb_Yl+\theta(Y)\nb_Xl]
+c(X,Y)\nb\theta l\\
& +k[(\nb_X\theta)(Y)+(k-2)\theta(X)\theta(Y)+c(X,Y)c(\theta,\theta)]l.
\end{split}\eeq
Taking $k=1$ and using \eqref{scht} shows that
$\mbox{Hess}^\nb_0(X,Y)l+h^{\nb,s}_0(X,Y)l$ does not depend on the choice
of the Weyl structure $\nb$. 
\end{proof}
As an immediate corollary, we see that the difference of the
trace-free Hessians associated to two arbitrary Weyl structures is a
scalar operator on $L$. This motivates the following
\begin{defi}\label{mob}{\bf (Calderbank \cite{ca})} A {\em M\"obius structure} on a 
conformal manifold $(M,c)$ of dimension at least $2$ is a second order linear
  differential operator 
$$\M:C^\infty(L)\ra C^\infty(S^2_0M\otimes L)$$
such that $\M-\hs^\nb_0$ is a scalar operator 
for some (and thus all) Weyl structures $\nb$ on $(M,c)$.
\end{defi}
The M\"obius structure defined by \eqref{cm} is called 
the {\em canonical M\"obius structure} of $(M,c)$ and will be denoted
by $\M^c$. 
\begin{rem}\label{moeb3} If $\dim M\ge 3$, then every M\"obius
  structure $\M$ is the sum of the canonical M\"obius structure $\M^c$
  and a trace-free symmetric bilinear form on $M$. If $\dim M=2$, we {\em
    define} $h^{\M,\nb}_0:=\M-\hs^\nb_0$ as a symmetric bilinear form
  and we conclude from \eqref{hess} that $h^{\M,\nb}_0$ satisfies the
  transformation rule \eqref{h0}. In fact, we have:\end{rem}
\begin{prop}\label{moeb-h}\cite{ca} A M\"obius structure $\M$ on a conformal
  manifold $(M,c)$ is equivalent to a map 
$$h^\M_0:\{\mbox{Weyl structures on $(M,c)$}\}\ra C^\infty (S^2_0M)$$
such that if $\nb,\nb'$ are Weyl structures satisfying \eqref{e1}, then
$h^{\M,\nb}_0:=h^\M_0(\nb)$ and $h^{\M,\nb'}_0:=h^\M_0(\nb')$ satisfy
\eqref{h0}.
\end{prop}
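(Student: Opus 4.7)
The plan is to establish the bijection by writing down each direction explicitly and then verifying that they are mutually inverse. Given a M\"obius structure $\M$, I define, for every Weyl structure $\nb$, the tensor $h^{\M,\nb}_0 \in C^\infty(S^2_0 M)$ by the requirement that $\M - \hs^\nb_0$ acts as multiplication by $h^{\M,\nb}_0$ on sections of $L$; this is legitimate because, by Definition \ref{mob}, $\M - \hs^\nb_0$ is a scalar zeroth-order operator with values in $S^2_0 M\otimes L$, hence identifies with a unique section of $S^2_0 M$. Conversely, given a map $h^\M_0$ satisfying the transformation rule of Corollary \ref{scalcf}, I pick any Weyl structure $\nb$ and set $\M := \hs^\nb_0 + h^{\M,\nb}_0$, where again the bilinear form is regarded as a scalar operator on $L$.

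Both directions reduce to a single compatibility identity, which is the key step: under a change $\nb \mapsto \nb' = \nb + \tl\theta$, the change of $\hs^\nb_0$ (viewed as an operator on $L$) and that of $h^{\M,\nb}_0$ (as an algebraic tensor in $S^2_0 M$) must be exact negatives of each other. Concretely, I specialise \eqref{hess} to $k = 1$, so that the terms linear in $\nb l$ vanish and the remaining terms are zeroth-order in $l$. Projecting onto the trace-free part kills the terms proportional to $c(X,Y)$, and one is left with the identity
$$\hs^{\nb'}_0 - \hs^\nb_0 = \bigl[(\nb\theta)(X,Y) - \theta(X)\theta(Y)\bigr]_0,$$
understood as a scalar operator on $L$. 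Comparing with \eqref{h0} shows that this difference equals $-(h^{\nb'}_0 - h^{\nb}_0)$. Hence in the forward direction the newly defined $h^{\M,\nb}_0 = \M - \hs^\nb_0$ transforms according to \eqref{h0}, while in the backward direction the sum $\hs^\nb_0 + h^{\M,\nb}_0$ is independent of $\nb$, so $\M$ is a well-defined operator, and it is a M\"obius structure by construction.

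The main subtlety is notational rather than computational: in \eqref{h0} the symbol $(\nb\theta)_0$ must be interpreted as the trace-free symmetric part of the a priori non-symmetric bilinear form $(X,Y)\mapsto(\nb_X\theta)(Y)$, while in \eqref{hess} the object $\hs^\nb(X,Y)l$ is already symmetric in $X,Y$ by the torsion-freeness of $\nb$, so its trace-free part directly matches the left-hand side of \eqref{h0}. Once these conventions are fixed, the verification is merely the subtraction of two known formulas. A small additional remark is that, although Proposition \ref{curch} (and hence Corollary \ref{scalcf}) was stated for $n \ge 3$, the equivalence at $n = 2$ is essentially the content of Remark \ref{moeb3}: there \eqref{h0} is used as the \emph{definition} of the transformation rule for $h^{\M,\nb}_0$, and the computation \eqref{hess} itself is valid for all $n\ge 2$ since it makes no appeal to the curvature decomposition. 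This ensures that the proposition holds in the whole range where a M\"obius structure has been defined.
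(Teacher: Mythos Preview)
Your proof is correct and follows essentially the same approach as the paper: the paper does not spell out a separate proof of this proposition but indicates the argument in Remark~\ref{moeb3}, namely defining $h^{\M,\nb}_0:=\M-\hs^\nb_0$ and invoking the $k=1$ case of \eqref{hess} to obtain the transformation rule \eqref{h0}. Your explicit treatment of the converse direction and of the symmetry convention for $(\nb\theta)_0$ simply makes precise what the paper leaves implicit.
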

\begin{rem}\label{schtm} On a M\"obius surface, the trace-part of the
  Schouten-Weyl tensor, $\sigma^\nb$, is well-defined for every Weyl
  structure $\nb$ (it uses just the underlying conformal structure, see
  Remark \ref{RS}), and the same holds for the Faraday form (hence for
  the skew-symmetric part of what should be the Schouten-Weyl
  tensor). The M\"obius structure, in turn, associates to 
  $\nb$ the symmetric, trace-free {\em M\"obius Schouten-Weyl tensor}
  $h^{\M,\nb}_0$ as above. This means that, on a M\"obius surface,
  each Weyl structure has its own Schouten-Weyl tensor, just like in
  the case of a conformal manifold of higher dimension.\end{rem}

A similar construction (which yields the well-known conformal Laplacian) 
holds for the trace of the Hessian of Weyl structures.  
Note that the Schouten-Weyl tensor is only defined for conformal manifolds
of dimension $m>2$, but, in dimension 2, its trace
$\sigma^\nb$ (which is a multiple of the scalar curvature) is still
well-defined, and we have the following well-known fact: 

\begin{prop}\label{lapex} On a conformal manifold of dimension $m\ge 2$, the 
second order differential operator $\Lp^\nb:C^\infty(L^k)\ra C^\infty(L^{k-2})$ 
$$\Lp l=\Lp^c l:=\tr \left(\hs^\nb l\right)
+\left(1-\frac{m}{2}\right)\sigma^\nb l,$$ where $\nb$ is any Weyl
structure and $\hs^\nb$, $\sigma^\nb$ are the corresponding Hessian,
resp. pure-trace part of Schouten tensor of $\nb$, is independent on
$\nb$ for $k=1-\frac m2$.
\end{prop}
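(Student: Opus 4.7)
The plan is to verify that $\Lp^{\nb'} l = \Lp^\nb l$ whenever $\nb, \nb'$ are two Weyl structures related by $\nb' = \nb + \tilde\theta$ as in \eqref{e1}, provided $l$ is a section of $L^k$ with $k = 1-\frac{m}{2}$. Since any two Weyl structures differ in this way, independence of $\nb$ will follow.

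First, I would take the $c$-trace of the formula \eqref{hess} established in the proof of Proposition \ref{mobex}. This yields $\tr(\hs^{\nb'} l) - \tr(\hs^\nb l)$ as a linear combination of three elementary scalar quantities in $L^{k-2}$: a directional derivative $\nb_{\theta^\sharp} l$ (with $\theta^\sharp$ the $c$-dual of $\theta$), a divergence term $\delta^\nb\theta\cdot l$, and a quadratic term $c(\theta,\theta) l$, appearing respectively with coefficients $2(k-1)+m$, $-k$, and $k(k-2+m)$. This is just careful bookkeeping, using $\tr_c(c) = m$ and $\tr_c(\nb\theta) = -\delta^\nb\theta$.

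Next, I would use \eqref{s0} of Corollary \ref{scalcf}, which by Remark \ref{RS} is valid also for $m=2$, to compute the change $(1-\frac{m}{2})\sigma^{\nb'}l - (1-\frac{m}{2})\sigma^\nb l$. It contributes an extra $(1-\frac{m}{2})\delta^\nb\theta\cdot l$ and an extra $(1-\frac{m}{2})^2 c(\theta,\theta) l$ to the total (using the convenient identity $(2-m)/2 = 1-m/2$).

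Combining, $\Lp^{\nb'} l - \Lp^\nb l$ becomes a linear combination of the three independent expressions $\nb_{\theta^\sharp} l$, $\delta^\nb\theta\cdot l$, and $c(\theta,\theta) l$. Setting the coefficient of the first to zero yields $k=1-\frac{m}{2}$; the coefficient of the second vanishes at precisely the same value; and a direct substitution confirms that the quadratic coefficient $k(k-2+m) + (1-\frac{m}{2})^2$ of $c(\theta,\theta) l$ also vanishes at this $k$. This simultaneous triple cancellation is the whole content of the statement and singles out $k = 1 - \frac{m}{2}$ as the distinguished weight for the conformal Laplacian. The only real obstacle is careful bookkeeping in the trace of \eqref{hess}, in particular distinguishing $\nb_{\theta^\sharp} l$ from $\delta^\nb\theta\cdot l$ and making sure that \eqref{s0} — and not the full Schouten-Weyl transformation \eqref{scht}, which need not define a tensor for $m=2$ — is the ingredient used for the pure-trace piece.
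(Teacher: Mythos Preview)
Your proof is correct and is exactly the approach the paper takes: the paper's one-line proof reads ``This follows directly after taking the conformal trace in \eqref{scht} and \eqref{hess},'' and your argument carries this out in detail, using \eqref{s0} (which is precisely the trace of \eqref{scht}) together with the trace of \eqref{hess}. Your careful remark that one should invoke \eqref{s0} rather than the full \eqref{scht} so as to cover the case $m=2$ is a nice refinement of the paper's terse reference.
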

\begin{proof} This follows directly after taking the conformal trace
  in \eqref{scht} and \eqref{hess}.
\end{proof}
The operator $\Lp^c$ is the {\em conformal Laplacian} of the conformal
manifold $(M,c)$, or the {\em Yamabe operator}.

Proposition \ref{lapex} motivates the following
\begin{defi}\label{lap} A {\em Laplace} structure on an
$m$--dimensional conformal
manifold $(M,c)$ is a second order linear differential operator
$$\Lp:C^\infty(L^k)\ra C^\infty(L^{k-2}),$$
where $k:=1-m/2$, such that $\Lp-\tr \left(\hs^\nb \right)$ is a
scalar operator for some (and thus all) Weyl structures $\nb$ on
$(M,c)$. 
\end{defi}

The Laplace structure defined by Proposition \ref{lapex} is called 
the {\em canonical Laplace structure} of $(M,c)$. 

\begin{rem}\label{lap2} If $\dim M\ge 2$, then every Laplace
  structure $\Lp$ is the sum of the canonical Laplace structure $\Lp^c$
  and a section of $L^{-2}$. If $\dim M=1$ and we have fixed a Laplace
  structure $\Lp$ on the curve $M$, we {\em
    define} $\s^{\M,\nb}:=\Lp-\hs^\nb$ as a section of $L^{-2}$
  and we conclude from \eqref{hess} that $\s^{\M,\nb}$ satisfies the
  transformation rule \eqref{s0}. In fact, as in Proposition
  \ref{moeb-h}, we immediately get:\end{rem}
\begin{prop}\label{laplace}\cite{bc} A Laplace structure $\Lp$ on a conformal
  manifold $(M,c)$ is equivalent to a map 
$$\s^\Lp:\{\mbox{Weyl structures on $(M,c)$}\}\ra C^\infty (L^{-2})$$
such that if $\nb,\nb'$ are Weyl structures satisfying \eqref{e1}, then
$\s^{\Lp,\nb}:=\s^\Lp(\nb)$ and $\s^{\Lp,\nb'}:=\s^\Lp(\nb')$ satisfy
\eqref{s0}.
\end{prop}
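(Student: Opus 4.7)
The plan is to follow the template of Proposition \ref{moeb-h}, substituting the trace-free Hessian and Schouten data used there by their pure-trace analogues, namely $\tr_c\hs^\nb$ and $\s^\nb$. The single computation driving everything is the $c$-trace of \eqref{hess} specialized to the conformal weight $k=1-m/2$: this is precisely the identity used (in one direction) in the proof of Proposition \ref{lapex}. For that $k$ the first-order terms in $\te$ drop out, and what survives is exactly $-(1-m/2)\bigl[\dl^\nb\te+(1-m/2)\,c(\te,\te)\bigr]\cdot l$, i.e.\ $-(1-m/2)$ times the right-hand side of \eqref{s0}.

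For the forward direction, given a Laplace structure $\Lp$ and a Weyl structure $\nb$, Definition \ref{lap} identifies $\Lp-\tr_c\hs^\nb$ with multiplication by a section of $L^{-2}$. When $m\neq 2$ I would define $\s^{\Lp,\nb}\in C^\infty(L^{-2})$ as the unique section with
$$\Lp\, l \;=\; \tr_c(\hs^\nb l)\;+\;\Bigl(1-\frac{m}{2}\Bigr)\,\s^{\Lp,\nb}\, l,\qquad l\in C^\infty(L^k).$$
Varying $\nb$ and using that $\Lp l$ is independent of $\nb$, the trace identity above forces $\s^{\Lp,\nb'}-\s^{\Lp,\nb}=\dl^\nb\te+\frac{2-m}{2}c(\te,\te)$, which is exactly \eqref{s0}. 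Conversely, given a map $\s^\Lp$ satisfying \eqref{s0}, I would take the displayed formula as the \emph{definition} of $\Lp l$ relative to an arbitrary Weyl structure $\nb$, and re-run the same trace computation to verify that the right-hand side is $\nb$-independent. This yields a well-defined operator $C^\infty(L^k)\to C^\infty(L^{k-2})$ which is by construction a Laplace structure with associated map $\s^\Lp$, and the two assignments are mutually inverse by inspection.

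The main obstacle is the borderline case $m=2$: here $1-m/2=0$, so the defining formula no longer uniquely extracts $\s^{\Lp,\nb}$ from $\Lp-\tr_c\hs^\nb$. However, the trace identity now says that $\Lp-\tr_c\hs^\nb$ is already independent of $\nb$ as a section of $L^{-2}$; calling it $\phi^\Lp$, one sets $\s^{\Lp,\nb}:=\s^\nb+\phi^\Lp$, and \eqref{s0} is automatic because $\s^\nb$ already satisfies it. In dimension $m=1$ the coefficient $1-m/2=\tfrac{1}{2}$ is nonzero, so the argument goes through as in higher dimensions; this recovers, up to the scaling factor $\tfrac{1}{2}$, the ad hoc definition $\s^{\Lp,\nb}:=\Lp-\hs^\nb$ from Remark \ref{lap2}.
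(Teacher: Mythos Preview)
Your proposal is correct and follows exactly the route the paper indicates: the paper's own ``proof'' is the single line ``as in Proposition~\ref{moeb-h}, we immediately get'', and you have faithfully carried out that template by tracing \eqref{hess} at weight $k=1-m/2$ and matching against \eqref{s0}. Your explicit treatment of the edge cases $m=1$ and $m=2$ (including the normalization discrepancy with Remark~\ref{lap2}) goes beyond what the paper spells out, but is in the same spirit; the only point you leave implicit is the inverse map for $m=2$, which is immediate from your forward construction since both sides are affine over $C^\infty(L^{-2})$.
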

\begin{rem}\label{schtl} Like in the case of a M\"obius surface
  (Remark \ref{schtm}), on a Laplace curve $(C,\Lp)$,
  $\sigma^{\Lp,\nb}$ plays therefore the role of the Schouten-Weyl
  tensor associated to the connection $\nb$. 
% A  non-vanishing local
%   section $l$ of $L^{\frac12}$ that satisfies $\Lp l=0$ defines a
%   metric, thus a connection $\nb^l$, whose Schouten-Weyl tensor
%   $\sigma^{\Lp,\nb^l}$ vanishes. Conversely, if
%   $\sigma^{\Lp,\nb}\equiv 0$, every $\nb$-parallel section of
%   $L^{\frac12}$ satisfies $\Lp l=0$.
In \cite{bc}, the authors use also the name {\em M\"obius structures}
for 1-dimensional Laplace structures. 
\end{rem}

It turns out that M\"obius and Laplace structures, although
implicitely present in
higher dimensions as well, are particularly important in dimensions 2
and 1 respectively, because they provide to an otherwise {\em flexible}
conformal structure is such a low dimension the rigidity that is
implicit in higher dimensions.

%  On curves in
% particular, conformal structures, as well as a M\"obius structures,
% are void, but any Laplace structure defines a projective structure,
% \cite{hitch}, \cite{guha}, see also the forthcoming paper \cite{psc}
% for details. 

%Moreover, any  curve in a
%conformal manifold $(M^m,c)$ inherits a Laplace structure provided that
%$m\ge 3$, or $m=2$ and $M$ is endowed with a M\"obius structure.

%\obs The weight $k=1-m/2$ is chosen such that the (Laplace) operator in 
%the definition above has no first order term when expressed by a Weyl
%structure. See also below for the definition of the conformal Laplacian.
%\eobs

\begin{rem} Obviously, any M\"obius, resp. Laplace structure
  determine, through their principal symbols, the underlying conformal
  structure. The two propositions above show that, for higher
  dimensions, the converse also holds (but in a non-trivial way). In
  the next section we will see that any submanifold in a conformal
  $m$-dimensional manifold $(M,c)$ inherits an {\em induced} M\"obius
  structure (for $m\ge 3$) and an {\em induced} Laplace structure (if
  $m\ge 3$ or if $m=2$ and $M$ has, additionally, a M\"obius structure). 
\end{rem}
\begin{rem} The trace-free, resp. trace of \eqref{hess} shows that,
  unless the weight $k$ is equal to $1$, resp. to $\frac{2-m}{2}$, the
  trace-free Hessian, resp. the trace of the Hessian can not be corrected
  merely by a scalar term in order to produce a conformally invariant
  operator (in general, a first-order correction term is also needed). This implies that the M\"obius and the Laplace operators, as
  defined in Propositions \ref{mobex} and \ref{lapex} (including the
  specified conformal weights) are the only second-order, linear
  differential operators on weight bundles, that can be canonically
  associated to a conformal structure.

\subsection{Geometric meaning}
M\"obius and Laplace structures have the following geometric interpretation: a non-vanishing section $l$ of the
corresponding weight bundle satisfies $\mathcal{P}l=0$ if and only if
the correction term in the defining formula from Proposition
\ref{mobex}, resp. \ref{lapex}, when using the Levi-Civita connection $\nb^l$ of the
Riemannian metric $g^l$ associated to $l$, vanishes identically, i.e.
$$\M=\hs_0^{\nb^l},\mbox{ resp. }\Lp=\tr(\hs^{\nb^l}).$$
This in turn means that $\M l=0$ (for $n>2$) if and only if the metric $g^l$ is
Einstein, and $\Lp l=0$ (for $n>1$) if and only if $g^l$ is
scalar-flat, which is a solution to the $s=0$ {\em Yamabe problem}: finding
metrics with (constant) scalar curvature equal to $s$ (here $s=0$) in
the conformal class. The general Yamabe problem (finding metrics with
constant, non-zero, scalar curvature in the conformal class)
corresponds to the 
eigenvectors of $\Lp$ (in order to identify $L^k$ with $L^{k-2}$, so
that the concept of eigenvalue makes sense, the solution $l$ itself is
used, which turns the general Yamabe problem into a non-linear one). 
\end{rem}
A little weaker than to find solutions to $\M l=0$ is to look for Weyl
structures, such that their trace-free Hessian is equal to $\M$
(therefore, the trace-free symmetric Schouten-Weyl tensor
(or, equivalently, Ricci tensor) vanishes); these connections are
called {\em Einstein-Weyl structures}. For a M\"obius surface, {\em
  M\"obius-Einstein-Weyl structures} can be defined as Weyl structures
$\nb$ such that $\M=\hs^\nb_0$ (in \cite{ca}, the author defines
2-dimensional Einstein-Weyl structures on a conformal surface as
above, but requiring in addition that $\M$ -- which is not fixed in
this case -- is some {\em flat} M\"obius structure, i.e., it is
locally given by a conformal chart).

Analoguously, the question of finding parametrizations
(note that a metric on a curve is equivalent to a parametrization) of
a given Laplace curve $(C,\Lp)$ such that the zero-order term of 
$\Lp$, in this parametrization, is constant, can be seen as an
extension to Laplace (or, equivalently, projective) curves of the
Yamabe problem \cite{psc}.

\section{Extrinsic conformal geometry}
In this section we define three conformally invariant tensors that
characterize the embedding of two conformal manifolds, just like the
second fundamental form does in the Riemannian setting. The goal is to
define various geometric notions of {\em totally geodesic} conformal
embeddings (using the {\em conformal geodesics}, to be defined below)
and characterize them by the vanishing of (some of) the above mentioned invariants.

\subsection{Conformal geodesics}
Let $M^m,$ be a smooth manifold, endowed with a conformal structure if
$m\ge 3$, a M\"obius structure if $m=2$ or a Laplace structure if $m=1$.
There is a way to define a {\em conformal acceleration} of an (immersed) parametrized curve in $M$:
\begin{defi} Let $\c:I\to C\subset M$ be an immersion. Choose any Riemannian metric $g\in c$ and denote by $h^g$
  its Schouten tensor defined by \eqref{RSg} for $m>2$, and by
  Proposition \ref{moeb-h} for $m=2$.
Then the following vector field along the curve $\c$
\beq\label{acc}
a(\g):=g(\dg,\dg) (h^g(\dg))^\sharp-\dddg+3\frac{g(\dg,\ddg)}{g(\dg,\dg)}\ddg+
\bigg(-6\frac{g(\dg,\ddg)^2}{g(\dg,\dg)^2}+\frac32\frac{g(\ddg,\ddg)}{g(\dg,\dg)}+
2\frac{g(\dg,\dddg)}{g(\dg,\dg)}\bigg)\dg 
\eeq
is called the
{\em conformal acceleration} of $\g$. The notation $(h^g(\dg))^\sharp$ stands here for the vector field associated by the metric $g$ to the 1-form $h^g(\dg)$ along $\gamma$ and $\ddg:=\nabla^g_{\dg}\dg$,  $\dddg:=\nabla^g_{\dg}\ddg$.
\end{defi}

It is well known  (cf. \cite[p. 67]{ga}, \cite[p. 218]{baea} or Lemma \ref{accel} below) that the conformal acceleration of a parametrized curve does not depend on the metric $g$.

The following observation is folklore (cf. e.g. \cite{baea}):

\begin{lem} The normal part of $a(\g)$ does not depend on the
parametrization $\g$ of $C$. \end{lem}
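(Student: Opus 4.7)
The plan is to prove the lemma by showing that under an arbitrary reparametrization $\tl\g(s):=\g(\phi(s))$ the conformal acceleration $a(\tl\g)$ differs from a simple rescaling of $a(\g)$ only by a term along $\dg$, so that its component orthogonal to $\dg$ is determined by $C$ itself. Fix any Riemannian metric $g$ in the conformal class; by the preceding lemma $a(\g)$ is independent of this choice, and since the $g$-tangential projector $v\mapsto \frac{g(v,\dg)}{g(\dg,\dg)}\dg$ does not change under a conformal rescaling of $g$, the splitting of a vector along $\g$ into its $\dg$-tangent and normal parts is in fact an invariant of the conformal class.

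First I would use that $\nb^g$ is torsion-free to expand
\begin{align*}
\dot{\tl\g}&=\phi'\dg,\qquad \ddot{\tl\g}=(\phi')^2\ddg+\phi''\dg,\\
\dddot{\tl\g}&=(\phi')^3\dddg+3\phi'\phi''\ddg+\phi'''\dg,
\end{align*}
where the right-hand sides are evaluated at $\phi(s)$. Next I would substitute these into \eqref{acc} (applied to $\tl\g$) and examine, summand by summand, the contributions in the $(h^g(\dg))^\sharp$-, $\dddg$-, $\ddg$- and $\dg$-directions. The first summand $g(\dot\g,\dot\g)(h^g(\dot\g))^\sharp$ immediately acquires the factor $(\phi')^3$. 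For the combination $-\dddot{\tl\g}+3\,\frac{g(\dot{\tl\g},\ddot{\tl\g})}{g(\dot{\tl\g},\dot{\tl\g})}\,\ddot{\tl\g}$ the crucial check is that the $\ddg$-piece $-3\phi'\phi''\ddg$ coming from $-\dddot{\tl\g}$ is cancelled exactly by the $\ddg$-piece $+3\phi'\phi''\ddg$ produced by the $\phi''/\phi'$ contribution in the quotient $g(\dot{\tl\g},\ddot{\tl\g})/g(\dot{\tl\g},\dot{\tl\g})$ times $(\phi')^2\ddg$; what survives from these two summands is $(\phi')^3$ times the $\dddg$- and $\ddg$-terms of $a(\g)$, plus residual contributions proportional to $\phi'''$, $(\phi'')^2/\phi'$ and $\phi'\phi''\,g(\dg,\ddg)/g(\dg,\dg)$, all of which lie in the $\dg$-direction. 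The fourth summand of \eqref{acc} is by construction a scalar multiple of the velocity, so it too contributes only in the $\dg$-direction for both $\g$ and $\tl\g$.

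Putting these pieces together, I expect an identity of the form
$$a(\tl\g)(s)=(\phi'(s))^3\,a(\g)(\phi(s))+\mu(s)\,\dg(\phi(s))$$
for some scalar function $\mu$ depending on $\phi$ and $\g$. Projecting orthogonally to $\dg$ then gives the lemma: the normal component of $a(\tl\g)$ at $\tl\g(s)$ equals $(\phi'(s))^3$ times the normal component of $a(\g)$ at $\g(\phi(s))$, so as an element of the normal direction in $T_{\g(\phi(s))}M/T_{\g(\phi(s))}C$ (up to a positive scale) it does not depend on the parametrization.

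The hard part will be the bookkeeping in the substitution step: verifying that the $\phi''$-contributions from the top-order piece $-\dddot{\tl\g}$ exactly kill those from the quadratic piece $3\,g(\dot{\tl\g},\ddot{\tl\g})/g(\dot{\tl\g},\dot{\tl\g})\,\ddot{\tl\g}$, and that no $\phi''$-, $\phi'''$- or $(\phi'')^2/\phi'$-correction leaks into the $\dddg$- or $\ddg$-direction. The coefficients $-1,3,-6,\tfrac32,2$ in \eqref{acc} are tailored precisely so that these cancellations occur; checking them is essentially what the lemma asks us to do.
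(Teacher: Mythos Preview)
The paper does not actually prove this lemma; it records it as folklore and refers to \cite{baea}. Your direct-substitution approach is the standard argument, and the computations you sketch are correct: with $\dot{\tl\g}=\phi'\dg$, $\ddot{\tl\g}=(\phi')^2\ddg+\phi''\dg$, $\dddot{\tl\g}=(\phi')^3\dddg+3\phi'\phi''\ddg+\phi'''\dg$, the crucial cancellation of the $-3\phi'\phi''\ddg$ from $-\dddot{\tl\g}$ against the $+3\phi'\phi''\ddg$ from $3\frac{g(\dot{\tl\g},\ddot{\tl\g})}{g(\dot{\tl\g},\dot{\tl\g})}\ddot{\tl\g}$ happens exactly as you say, and the remaining reparametrization debris lies entirely in the $\dg$-direction.

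One point of interpretation is worth making explicit. Your identity $a(\tl\g)^\perp(s)=(\phi'(s))^3\,a(\g)^\perp(\phi(s))$ shows that the normal part is \emph{not} literally invariant as a vector field along $C$; rather, $a(\g)^\perp$ transforms as a section of $\nu\otimes (T^*C)^{\otimes 3}$ (equivalently, $a(\g)^\perp\,dt^3$ is intrinsic to $C$). You already flag this (``up to a positive scale''), and for the paper's purposes this is exactly what is needed: the vanishing of $a(\g)^\perp$ is a parametrization-independent condition, which is all that Definition~\ref{cgeod} requires for unparametrized conformal geodesics. It would strengthen your write-up to state the invariance in this density form rather than leaving it at ``up to scale''.
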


{\em Conformal geodesics} on a conformal manifold (resp. a
M\"obius surface or a Laplace curve) are then defined, by analogy with
the notion of a geodesic of a connection, by the condition that the conformal
acceleration vanishes \cite{ferr}, \cite{phys}, \cite{ga}:

\begin{defi}\label{cgeod}
A regular curve $\c:I\ra M$ in a conformal manifold
$(M^m,c)$, with $m\ge 3$, a M\"obius surface for $m=2$, or a Laplace
curve for $m=1$, is called {\em parametrized 
  conformal geodesic} if its conformal acceleration
$a(\c)$ vanishes identically and {\em
unparametrized conformal geodesic} if the normal part of the
conformal acceleration vanishes.\end{defi}
The Cauchy-Lipschitz Theorem easily shows that every
unparametrized conformal geodesic has local parametrizations turning
it into a parametrized conformal geodesic. 

\begin{rem} The notion of conformal geodesic is also known in the
  literature to emerge out of the canonical {\em Cartan connection}
  associated to a given conformal $n$-manifold $(n\ge 3)$, resp. a
  M\"obius surface: The Cartan connection is a total parallelism 
$$\omega: \T P\ra \mathfrak{g},$$ on a
  certain principal bundle $P$ over $M$, with values in the Lie
  algebra of the M\"obius group $G:=\SO(n+1,1)$. The integral curves of
  the fundamental vector fields $X^A$ on $P$, defined by elements
  $A\in\mathfrak{g}$, are parametrized curves on $P$ that project on
  parametrized conformal geodesics on $M$ in the sense of the Definition \ref{cgeod}, see
  \cite{cap}, \cite{parab}.
\end{rem}

\begin{rem} E. Musso \cite{mus} uses the terminology {\em ``conformal geodesics''} to
  denote some curves that are extremal with respect to a functional
  which consists of integrating the {\em
    conformal arc length} \cite{ssu}, \cite{su} (which, in our terms,
  is the square root of the norm of the normal component of $a(\c)$)
  along the curve $\c$, assuming that this quantity is nowhere
  vanishing. Note that for our definition of the conformal geodesics,
  this ``conformal arc length'' vanishes identically, therefore Musso's
  notion of ``conformal geodesics'' never coincides with the one we use
  here. In fact, we rather consider Musso's ``conformal arc length''
  to be a conformal analogon of the {\em (extrinsic) curvature} of the
  curve; the {\em arc length parametrizations} of a curve in
  a Riemannian manifold is rather an {\em intrinsic} structure on the
  curve, as is the induced {\em Laplace} (or projective) structure on
  a curve in a conformal manifold / M\"obius surface (see below and,
  for more details, \cite{psc}).
\end{rem}

Equivalently, the conformal geodesics can be defined as follows
(see also \cite{ferr}, \cite{ga}): first, Corollary \ref{adapt} below
shows that every parametrized curve is geodesic for a certain Weyl
structure which is then called {\em
  adapted}; then, the conformal, or M\"obius,
resp. Laplace geodesics can be defined as follows: 
\begin{defi}\label{cmg} Let $(M,c)$ be a conformal manifold of
  dimension at least  3, a M\"obius surface or a Laplace curve. A regular curve
  $\gamma:I\ra M$ is a parametrized {\em conformal}, resp. {\em
    M\"obius geodesic} if any adapted Weyl structure $\nb$ (i.e., for
  which $\nb_{\dg}\dg=0$)  satifies
\be\label{cg}h^\nb(\dg)=0\ee
on $I$. It is an unparametrized conformal geodesic if the section of $\nu$
induced by $h^\nb(\dg)$ vanishes identically. \end{defi}
\begin{rem} The transformation rule \eqref{scht} shows that the
  quantity $h^\nb(\dg)$ depends only on the restriction of the Weyl
  structure $\nb$ along the curve $\gamma(I)$. Note that no other
  similar tensor defined in the previous section ($h^{\nb,s}$,
  $\sigma^\nb$, $\ric^\nb$, etc.) has this property.\end{rem}  
The equivalence of this definition with the previous one is a
consequence of
\begin{lem}\label{accel} The conformal acceleration of a parametrized curve $\c$ is
  equal to the vector field $c(\dg,\dg)h^\nb(\dg)$ along $\c$,
for any Weyl structure $\nabla$ on $(M,c)$, adapted to $\g$.
\end{lem}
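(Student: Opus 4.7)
I will fix a metric $g\in c$, build one explicit Weyl structure $\nb$ adapted to $\g$ out of the Levi-Civita connection $\nb^g$, compute $c(\dg,\dg)h^\nb(\dg)$ via the Schouten transformation rule \eqref{scht}, and match it against \eqref{acc}. Independence of the result from the choice of adapted Weyl structure is essentially automatic: since $\tl\eta_{\dg}\dg=2\eta(\dg)\dg-g(\dg,\dg)\eta^\sharp$, pairing $\tl\eta_{\dg}\dg=0$ with $\dg$ under $g$ yields $\eta(\dg)=0$, whence $\eta^\sharp=0$ on $\g$ and $\eta|_\g\equiv 0$; hence any two adapted Weyl structures coincide along $\g$, so by the Remark following Definition \ref{cmg} the vector $c(\dg,\dg)h^\nb(\dg)$ depends only on this common restriction. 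Metric-independence of $a(\g)$ will simultaneously drop out of the same computation.

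First, writing $\nb=\nb^g+\tl\te$ and imposing $\nb_{\dg}\dg=0$, then pairing with $\dg$ under $g$, yields
$$\te(\dg)=-\frac{g(\dg,\ddg)}{g(\dg,\dg)},\qquad \te^\sharp\big|_\g=\frac{\ddg}{g(\dg,\dg)}-\frac{2g(\dg,\ddg)}{g(\dg,\dg)^2}\dg;$$
any smooth extension off $\g$ defines a global Weyl structure via Theorem \ref{fund}. Next, Proposition \ref{curch} gives
$$h^\nb(\dg)=h^g(\dg)-\nb^g_{\dg}\te+\te(\dg)\te-\tfrac12 c(\te,\te)c(\dg,\cdot);$$
taking the $g$-sharp (which commutes with $\nb^g$ since $\nb^g g=0$) and multiplying by $g(\dg,\dg)$ produces $g(\dg,\dg)(h^g(\dg))^\sharp$ plus three correction terms. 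The first summand matches the $g(\dg,\dg)(h^g(\dg))^\sharp$ term of \eqref{acc}, so only the three corrections remain to be checked.

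The bookkeeping in this last step is the main (and essentially only) obstacle. Differentiating $\te^\sharp$ along $\g$ via the product rule, using $\tfrac{d}{dt}g(\dg,\dg)=2g(\dg,\ddg)$ and $\tfrac{d}{dt}g(\dg,\ddg)=g(\dg,\dddg)+g(\ddg,\ddg)$, produces $-\dddg$ together with explicit multiples of $\ddg$ and $\dg$; the quadratic term $g(\dg,\dg)\te(\dg)\te^\sharp$ contributes further summands in $\ddg$ and $\dg$; and the elementary identity $g(\te^\sharp,\te^\sharp)=g(\ddg,\ddg)/g(\dg,\dg)^2$ (a one-line computation from the explicit form of $\te^\sharp$ above) supplies precisely the $\tfrac32 g(\ddg,\ddg)/g(\dg,\dg)\cdot\dg$ coefficient. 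A straightforward collection of coefficients reproduces \eqref{acc} term by term, completing the proof.
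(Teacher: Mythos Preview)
Your proposal is correct and follows essentially the same route as the paper: fix a metric $g\in c$, solve $\nb_{\dg}\dg=0$ for the $1$-form $\te$ (your formula for $\te^\sharp$ is exactly the paper's $T=\ddg/f^2-2(f'/f^3)\dg$ with $f^2=g(\dg,\dg)$), apply the Schouten transformation rule \eqref{scht}, and expand. The paper also remarks in passing that this computation exhibits uniqueness of the adapted Weyl structure along $\g$, which you isolate as a separate preliminary step. One minor imprecision: the term $-\tfrac12 c(\te,\te)c(\dg,\dg)\dg$ alone contributes $-\tfrac12\,g(\ddg,\ddg)/g(\dg,\dg)\cdot\dg$, not $\tfrac32$; the remaining $+2$ comes from the $\dg$-component of $-g(\dg,\dg)\nb^g_{\dg}\te^\sharp$, so your sentence about what ``supplies precisely'' that coefficient should be rephrased.
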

\begin{proof} Choose any metric $g\in c$, and write $c=l^2\,g$ for some section $l$ of $L$. We denote by $\nabla^g$ the Levi-Civita connection of $g$ and by $f^2:=g(\dg,\dg)$. Then 
\be\label{q1}c(\dg,\dg)=f^2l^2,\ee
and after two covariant derivatives with respect to $\dg$:
\be\label{q2}g(\ddg,\dg)=ff'\qquad \hbox{and}\qquad g(\ddg,\ddg)+g(\dg,\dddg)=f'^2+ff'',\ee
where we denoted by $f':=\dg(f)$ and $f'':=\dg(f')$.

Let $\nabla:=\nabla^g+\tilde\theta$ be a Weyl structure on $(M,c)$ and
let $T:=l^2\theta$ be the dual vector field to $\theta$ by $g$. 
By \eqref{e1} and \eqref{q1}, $\nabla$ is adapted to $\gamma$ if and only if 
$$0=\nabla^g_{\dg}\dg+2\theta(\dg)\dg-c(\dg,\dg)\theta=\nabla^g_{\dg}\dg+2l^{-2}g(T,\dg)\dg-f^2T.$$
Taking the scalar product with $\dg$ and using \eqref{q2} we obtain $g(T,\dg)=-\tfrac{f'}f$. Reinjecting this in the same equation yields
\be\label{q3}T=\frac{\ddg}{f^2}-2\frac{f'}{f^3}\dg.\ee
Incidentally, this just proves the uniqueness of $\nabla$ along
$\gamma$, fact that we will prove in Proposition \ref{transcx} to hold in a
more general setting. From \eqref{q2} and \eqref{q2} we obtain
\be\label{q4}g(T,T)=\frac{g(\ddg,\ddg)}{f^4}.\ee

On the other hand, Equation \eqref{scht} (with $\nabla'$ and $\nabla$ there replaced by $\nabla$ and $\nabla^g$ respectively) gives
$$h^\nabla=h^g-\nabla^g\theta+\theta\otimes\theta-\tfrac12c(\theta,\theta)c.$$
Plugging $\dg$ in this formula and using \eqref{q1} and \eqref{q3} yields:
\bea c(\dg,\dg)h^\nabla(\dg)&=&c(\dg,\dg)\left(h^g(\dg)-\nabla^g_{\dg}\theta-\frac{f'}{f}\theta-\frac12l^{-2}g(T,T)\dg\right)\\
&=&f^2l^2\left(h^g(\dg)-l^{-2}\nabla^g_{\dg}\left(\frac{\ddg}{f^2}-2\frac{f'}{f^3}\dg\right)-\frac{f'}{f}\theta-\frac12l^{-2}g(T,T)\dg\right)\\
&=&f^2\left((h^g(\dg))^\sharp-\nabla^g_{\dg}\left(\frac{\ddg}{f^2}-2\frac{f'}{f^3}\dg\right)-\frac{f'}{f}T-\frac12\frac{g(\ddg,\ddg)}{f^4}\dg\right).
\eea
Developing this expression using \eqref{q3} and the expressions for $f'$ and $f''$ given by \eqref{q2}, proves the desired result.
\end{proof}
% \begin{rem}\label{lg} We can extend the definition of conformal geodesics to
%   {\em Laplace geodesics} on a Laplace curve $(C,\Lp)$: indeed,
%   Proposition \ref{laplace} shows that on such a curve, every
%   connection has its Schouten-Weyl tensor, which means that \eqref{cg}
%   makes sense in this context. Of course, only the parametrization of
%   such a Laplace geodesic is here relevant, the only unparametrized
%   Laplace geodesics being pieces of the curve itself.\end{rem}

\subsection{Tensorial invariants of a conformal embedding}
In the sequel,
$N$, of dimension $n$, will be an embedded submanifold of a conformal
manifold $(M,c)$, of dimension $m>n$, and we will consider $N$ as a
conformal submanifold of $M$ endowed with the induced conformal
structure still denoted by $c$. The cases when $n$ or $m$ are
less than $3$ will be specified explicitly. 

We denote by $\nu$ the normal bundle
to $N$, therefore $$TM|_N=TN\opr\nu.$$
We will denote by greek letters $\zeta,\xi,$ etc. normal vectors to $N$,
and by capital roman letters $X,Y,$ etc. tangent vectors to $N$. For a
vector $A\in T_XM$, $x\in N$, the components of $A$ in $T_xN$ and in
$\nu_x$ will be denoted by $A^N$, resp. $A^\perp$.

For a given Weyl structure $\nb$ on $M$,  the tangential component of $\nb_XY$, for
$X,Y\in TN$ defines a Weyl structure on $(N,c)$, called the {\em
  induced} Weyl structure and denoted by $\nb^N$:
$$\nb^N_XY:=(\nb_XY)^N.$$ 
The {\em normal} component of $\nb_XY$ (for $X,Y\in TN$) is the {\em fundamental form}
of $N\subset M$ associated to $\nb$, denoted by $B^\nb$:
$$B^\nb:\ot^2TN\ra\nu,\quad B^\nb(X,Y):=(\nb_XY)^\perp.$$
For another Weyl structure $\nb'=\nb+\tilde\theta$, for $\theta$ a
1-form, we easily get
\be\label{B}B^{\nb'}-B^\nb=-\theta^\perp\cdot c,\ee
therefore we have the following well-known fact:
\begin{prop}\label{Binv}The trace-free part $B_0\in S^2_0(T^*N)\ot\nu$
  of the fundamental form of a submanifold of a conformal manifold $M$
  depends only on the conformal structure $c$ and the embedding
  $N\subset M$.\end{prop}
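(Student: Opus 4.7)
The statement claims that $B_0^\nb$ is the same for every Weyl structure $\nb$. My approach is to compare $B^\nb$ and $B^{\nb'}$ for an arbitrary pair of Weyl structures related by \eqref{e1}, show that the difference is purely ``trace'' (\emph{i.e.} a multiple of the restriction of $c$ to $TN$, with values in $\nu$), and conclude that the trace-free projections coincide. In effect this amounts to justifying the equation \eqref{B} already announced in the text, together with reading off the tensor-type of its right-hand side.

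\textbf{Step 1: derive \eqref{B}.} Take two Weyl structures $\nb'=\nb+\tl\te$ on $(M,c)$. Plugging $X,Y\in TN$ into \eqref{e1} and \eqref{e2}, one has
\[
\nb'_XY-\nb_XY=\te(Y)X+\te(X)Y-c(X,Y)\te,
\]
where $\te$ is interpreted as a section of $TM\ot L^{-2}$ via the conformal identification $T^*M\simeq TM\ot L^{-2}$. Project on $\nu$: the first two terms vanish because $X,Y\in TN$, and the third contributes $-c(X,Y)\te^\perp$, where $\te^\perp\in\nu\ot L^{-2}$ is the normal component of $\te$. Thus $B^{\nb'}(X,Y)-B^\nb(X,Y)=-c(X,Y)\,\te^\perp$, which is \eqref{B}.

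\textbf{Step 2: identify the tensor type of the difference.} The right-hand side of \eqref{B} is the tensor $c\ot(-\te^\perp)\in S^2(T^*N)\ot L^2\ot\nu\ot L^{-2}\simeq S^2(T^*N)\ot\nu$; it belongs to the ``pure trace'' summand of the $\CO(n)$-decomposition
\[
S^2(T^*N)\ot\nu\;=\;S^2_0(T^*N)\ot\nu\;\oplus\;c\cdot(\nu\ot L^{-2}).
\]
Consequently its trace-free part vanishes, and taking trace-free parts in \eqref{B} yields $B_0^{\nb'}=B_0^\nb$. Since $\nb,\nb'$ were arbitrary, $B_0$ depends only on $c$ and on the embedding $N\subset M$.

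There is essentially no obstacle here; the only point requiring minor care is the bookkeeping of the conformal weights so that $c(X,Y)\te^\perp$ lands in the correct bundle $\nu$ and the decomposition above is clean. Note also that the symmetry of $B^\nb$ (needed to even speak of its trace-free part in $S^2_0(T^*N)\ot\nu$) follows from the torsion-freeness of $\nb$ combined with $[X,Y]\in TN$ for $X,Y\in TN$.
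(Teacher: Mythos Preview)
Your proof is correct and follows exactly the paper's approach: the paper simply states equation \eqref{B} (whose derivation you carry out in detail in Step~1) and immediately declares Proposition~\ref{Binv} as a consequence, which is precisely your Step~2. Your write-up just makes explicit what the paper leaves to the reader.
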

A point of $N$ where $B_0$ vanishes is called {\em umbilic},
and if $B_0\equiv 0$, then $N$ is a {\em totally umbilical} submanifold. 
For example, every curve in a conformal manifold is trivially totally
umbilical.
% ; moreover, the
% leaves of a {\em conformal product} \cite{cp} are always totally
% umbilical, because there exists a Weyl structure for which they are
% totally geodesic. 
In general, (\ref{B}) immediately implies
\begin{prop}A conformal submanifold $N\subset M$ is totally umbilical
  iff there exists a Weyl structure $\nb$ on $M$, defined along $N$, such
  that $N$ is $\nb$-totally geodesic.\end{prop}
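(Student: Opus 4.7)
The plan is to treat the two implications separately, using as the only tools the transformation rule \eqref{B} for the fundamental form under a change of Weyl structure, together with Proposition \ref{Binv}.

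The direction ($\Leftarrow$) is immediate: if $N$ is $\nb$-totally geodesic for some Weyl structure $\nb$ along $N$, then $B^\nb\equiv 0$, so in particular its trace-free part $B^\nb_0$ vanishes identically. By Proposition \ref{Binv}, $B_0$ is independent of the chosen Weyl structure, so $B_0 = B^\nb_0 = 0$, i.e.\ $N$ is totally umbilical.

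For the converse, I would start with any Weyl structure $\nb_0$ defined along $N$ (for instance, the restriction of a global one). The total umbilicity hypothesis, combined with Proposition \ref{Binv}, yields $B^{\nb_0}_0 = 0$, so $B^{\nb_0}$ is pure trace and can be written uniquely as $B^{\nb_0} = H\cdot c|_N$, where $H$ is a section of $\nu$ (with the appropriate conformal weight) along $N$, namely the ``mean curvature'' of $N$ with respect to $\nb_0$. The transformation rule \eqref{B} says that replacing $\nb_0$ by $\nb_0 + \tilde\theta$ subtracts $\theta^\perp \cdot c$ from the fundamental form, so I only need to choose a 1-form $\theta$ along $N$ whose normal component equals $H$. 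Setting $\theta^N := 0$ and $\theta^\perp := H$ does the job, and the resulting Weyl structure $\nb := \nb_0 + \tilde\theta$ along $N$ satisfies $B^\nb = 0$, i.e.\ $N$ is $\nb$-totally geodesic.

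There is no real obstacle: both implications are purely algebraic consequences of \eqref{B} and Proposition \ref{Binv}. The only point that deserves a brief comment is the interpretation of ``Weyl structure defined along $N$''. By Theorem \ref{fund} the space of Weyl structures is affine over the $1$-forms, so specifying a Weyl structure along $N$ amounts to specifying a background Weyl structure plus a $1$-form in $T^*M|_N$ --- which is precisely the type of object the argument produces.
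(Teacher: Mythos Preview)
Your proof is correct and follows exactly the approach the paper intends: the paper simply states that the proposition ``immediately implies'' from \eqref{B}, and you have written out the details of that immediate implication. The only minor imprecision is calling $H$ ``a section of $\nu$ (with the appropriate conformal weight)'' when the paper identifies it as a section of $\nu^*\simeq L^{-2}\otimes\nu$, but this is cosmetic and does not affect the argument.
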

The trace-part of the fundamental form is the {\em mean curvature}
$$H^\nb\in\nu^*\simeq
L^{-2}\ot\nu,\
H^\nb(\xi):=-\frac{1}{n}\tr\left((\nb_\mathbf{\!\!.\;}\xi)^N\right),$$  
such that we have the following decomposition of $B^\nb$:
\be\label{fform}
B^\nb=H^\nb\cdot c+B_0.\ee
By \eqref{B}, the mean curvature transforms according to the rule
\beq\label{mc}H^{\nb'}-H^\nb=-\theta^\perp.
\eeq
A Weyl connection, resp. metric on $(M,c)$ for which the mean curvature of
$N$ vanishes is called {\em adapted} to the embedding $N\subset
M$. This property makes sense pointwise: recall
that, in general, a conformal connection on a conformal manifold
$(M,c)$ is a section in an affine bundle (the associated vector bundle
is $\Lb^1M\ot\co(M)$). A Weyl structure is a section in an affine
subbundle, called $\weyl(M)$ (the associated vector subbundle is the
one of the $1$--forms with values in $\co(M)$ of type $\tilde\te$,
thus it is isomorphic with $\Lb^1M$). % For a submanifold $N\subset M$,
% we denote by $\weyl(M)^N$ the subbundle of the affine bundle
% $\weyl(M)|_N$ consisting of {\em adapted} Weyl structures. From
% Proposition \ref{transcx} and Corollary \ref{adpt} we have then:
\begin{prop}\label{adpt} Let $N\subset (M,c)$ be a submanifold in a conformal
  manifold. Then the {\em adapting map} 
$$\phi^M_N:\weyl(N)\ra \weyl(M)|_N,$$
that extends each (pointwise) Weyl structure on $N$ at $x$ to an adapted
Weyl structure on $M$ at $x$ is a right inverse to the {\em
  restriction map}
$$\mathrm{rest}^M_N:\weyl(M)_N\ra \weyl(N).$$
Therefore, the image of the adapting map, $\weyl(M)^N$, is
isomorphic to $\weyl(N)$ and the quotient
$\weyl(M)|_N/\weyl^N$ is canonically isomorphic to the normal bundle
$\nu$.\end{prop}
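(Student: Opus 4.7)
The plan is to exploit the affine bundle structures of $\weyl(M)|_N$ and $\weyl(N)$ together with the transformation rules \eqref{e1} and \eqref{mc}. The key observation is that the $c$-orthogonal splitting $T^*_xM = T^*_xN \oplus \nu^*_x$ at each $x\in N$ decouples the two pieces of information carried by a Weyl structure along $N$: its restriction to $N$ depends only on the tangential part $\te^N$ of the perturbation 1-form, while its mean curvature depends only on the normal part $\te^\perp$.

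First, at each $x\in N$ the fiber $\weyl(M)|_x$ is affine, modeled on $T^*_xM$ via $\nb\mapsto\te$ in \eqref{e1}, and $\weyl(N)_x$ is affine, modeled on $T^*_xN$. Expanding $\tilde\te_XY$ via \eqref{e1} and \eqref{e2} for $X,Y\in T_xN$ one reads off that only $\te^N$ appears in the tangential component, so $\mathrm{rest}^M_N$ is affine with linear part the canonical surjection $T^*_xM\to T^*_xN$. In particular, any $\nb^N_0\in\weyl(N)_x$ admits lifts $\nb_0\in \weyl(M)|_x$.

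To construct $\phi^M_N(\nb^N_0)$, I pick such a lift $\nb_0$ and set $\phi^M_N(\nb^N_0):=\nb_0+\tilde\te$, where $\te\in T^*_xM$ is the unique 1-form with $\te^N=0$ and $\te^\perp=H^{\nb_0}$. By the previous step the restriction is unchanged, and by \eqref{mc} the new mean curvature is $H^{\nb_0}-\te^\perp=0$, so the result is adapted. Independence of the choice of $\nb_0$ and injectivity of $\phi^M_N$ both follow from the same observation: two adapted Weyl structures at $x$ with identical restriction to $N$ differ by a perturbation $\tilde\te$ with $\te^N=0$ (equal restrictions) and $\te^\perp=0$ (both adapted, by \eqref{mc}), hence $\te=0$. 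This shows $\mathrm{rest}^M_N\circ\phi^M_N=\mathrm{id}_{\weyl(N)}$, so $\phi^M_N$ is indeed a right inverse, its image $\weyl(M)^N$ is precisely the bundle of pointwise adapted Weyl structures, and $\mathrm{rest}^M_N|_{\weyl(M)^N}$ inverts $\phi^M_N$, giving an affine-bundle isomorphism $\weyl(M)^N\simeq\weyl(N)$.

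For the quotient statement, \eqref{mc} says that $H^\nb$ shifts linearly by $-\te^\perp$ under $\nb\mapsto\nb+\tilde\te$, so $H^\nb$ is constant on $\weyl(M)^N$-cosets (take $\te^\perp=0$) and descends to an affine map from $\weyl(M)|_N/\weyl(M)^N$ onto the normal bundle. Since its kernel is exactly $\weyl(M)^N$ and the induced linear part is the canonical projection $T^*_xM\to\nu^*_x$ given by the splitting above, this map is a bijection. The only real bookkeeping here is the identification of $\nu^*\simeq L^{-2}\otimes\nu$ with ``the normal bundle $\nu$'' in the statement; there is no substantive analytic obstacle, as the whole argument is driven by the orthogonal splitting of $T^*M|_N$ and the linear transformation rule \eqref{mc} for the mean curvature.
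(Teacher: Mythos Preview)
Your proof is correct and follows exactly the approach the paper intends: the paper's own proof is the single sentence ``The proof is a direct consequence of \eqref{B}'', and you have simply unpacked this, using the affine structure of $\weyl(M)|_N$ over $T^*M|_N$, the orthogonal splitting $T^*M|_N=T^*N\oplus\nu^*$, and the transformation rule \eqref{mc} (the trace of \eqref{B}) to construct the adapting map and identify the quotient. Your closing remark about the bookkeeping $\nu^*\simeq L^{-2}\otimes\nu$ versus ``$\nu$'' is apt---the paper is tacitly using the conformal identification here.
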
  
The proof is a direct consequence of \eqref{B}. For a result on
adapted ambient metrics, a local viepoint is necessary:
\begin{prop}\label{transcx} Let $N\subset (M,c)$ be an embedded
  submanifold in a conformal manifold of dimension $m\ge 2$, and let
  $H$ be a section in the dual normal bundle $\nu^*$. For each
  metric $g^N$ on
  $N$, there exists a metric $g^M$ on $M$, inducing $g^N$, such that
  its Levi-Civita connection has $H$ as the mean curvature of $N$. Any
  other such metric ${g^M}'$ satisfies
\be\label{gM'}{g^M}'=e^{2f}g^M,\mbox{ such that } f|_N\equiv 1 \mbox{
  and } df|_N\equiv 0.\ee\end{prop}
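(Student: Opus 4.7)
The plan is to reduce both parts of the proposition to the conformal transformation rule \eqref{mc} for the mean curvature, applied to the Levi-Civita connections of two conformally related metrics on $M$.

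For existence, I would fix a reference metric $g_0\in c$, denote by $\nb^{g_0}$ its Levi-Civita connection and by $H_0\in C^\infty(\nu^*)$ the corresponding mean curvature of $N$, and look for $g^M$ in the form $g^M=e^{2\phi}g_0$ for some $\phi\in C^\infty(M)$. The requirement $g^M|_N=g^N$ is equivalent to the scalar condition $\phi|_N=\tfrac12\log(g^N/g_0|_N)$, well-defined because $g^N$ and $g_0|_N$ lie in the same conformal class on $N$. The Levi-Civita Weyl structures of $g_0$ and $g^M$ differ by the tensor $\tilde\theta$ associated to $\theta=d\phi$; this identification follows from \eqref{der} applied to the positive sections $l_0,l_M\in C^\infty(L)$ corresponding to $g_0$ and $g^M$, using $l_M=e^{-\phi}l_0$ together with $\nb^{g_0}l_0=0$ and $\nb^{g^M}l_M=0$. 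By \eqref{mc}, the mean curvature of $N$ in $(M,g^M)$ is then $H_0-(d\phi)^\perp|_N$, and setting this equal to $H$ amounts to prescribing $(d\phi)^\perp|_N=H_0-H$. A tubular neighborhood argument shows that any pair consisting of a smooth function on $N$ and a smooth section of $\nu^*$ over $N$ can be realized as the $1$-jet along $N$ of a smooth function on a neighborhood of $N$ in $M$ (and then extended globally via a cutoff). This produces the required $\phi$, hence $g^M$.

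For uniqueness, let $g^M$ and ${g^M}'$ both satisfy the conclusion, and write ${g^M}'=e^{2f}g^M$. The equality ${g^M}'|_N=g^M|_N=g^N$ forces $e^{2f}|_N\equiv 1$, hence $f|_N\equiv 0$ (the ``$f|_N\equiv 1$'' in the statement is evidently a typographical slip). Since $f$ is constant along $N$, the tangential part $(df)^N|_N$ vanishes automatically. Applying \eqref{mc} to the Levi-Civita connections of $g^M$ and ${g^M}'$, now with $\theta=df$, the equality of mean curvatures on $N$ yields $(df)^\perp|_N\equiv 0$. Combining the two components gives $df|_N\equiv 0$, as claimed.

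The only nontrivial step is the identification $\theta=d\phi$ (resp.\ $\theta=df$) for the difference of the Levi-Civita Weyl structures of two conformally related metrics; once this is carried out via \eqref{der}, the remainder of the proposition reduces to a direct application of \eqref{mc}, together with the standard existence of a smooth function on $M$ with prescribed $1$-jet along $N$.
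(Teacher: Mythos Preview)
Your proof is correct and follows essentially the same strategy as the paper: both reduce existence, via the transformation rule \eqref{mc}, to the construction of a smooth function on $M$ with prescribed $1$-jet along $N$. The paper makes this extension step a bit more explicit (writing $f^U(x,y)=\sum a_i(x)y_i$ in adapted local coordinates and gluing by a partition of unity), whereas you invoke a tubular neighborhood argument; these are the same idea. Your treatment is in one respect more complete: the paper's proof addresses only existence and says nothing about the characterization \eqref{gM'}, while you supply the uniqueness argument via \eqref{mc} and correctly identify the misprint (``$f|_N\equiv 1$'' should read ``$f|_N\equiv 0$'').
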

\begin{proof} I
It is easy to choose $g^M$ such that it restricts to $N$ as $g^N$, and
denote by $\te$ the difference between $H$ and the mean curvature of
$N$ with respect to $g^M$. We need to show that there exists a
function $f:M\ra\R$ such that\bi
\item $N=\{x\in M\ |\ f(x)=0\}$
\item $df|_N=\te$.\ei
On a chart
  domain $U\simeq \R^m$, where $U\cap N\simeq \R^n\subset\R^m$ as the
  zero set of the coordinates $y_1,...,y_{m-n}$: then a section
  $\te$ of $\nu^*|_{U\cap N}$ is given by
$$\te(x)=\sum_{i=1}^{m-n}a_i(x)dy_i,\ \forall x\in\R^n\simeq U\cap N$$
so the function $f^U:U\ra\R$ defined by 
$$f^U(x,y):=\sum_{i=1}^{m-n}a_i(x)y_i$$ 
satisfies $f^U|_{U\cap N}\equiv 0$ and
$df^U|_{U\cap N}=\te$.

Glueing such local functions using a covering of $N$ by contractible
charts and an underlying partition of unity yields a function $f:M\ra
\R$ such that
% In general, let $N$ be covered by small contractible charts
% $(U_i)_{i\in I}$ ($I$ is some index set) that map $U_i$ to $\R^m$ and
% $U_i\cap N$ to $\R^n\subset \R^n$. This is possible because $N$ is
% embedded. Applying the Lemma above we get a family of local functions
% $(f_i)_{i/in I}$ that satisfy $df_i|_{N\cap U_i}=\te|_{N\cap U_i}$,
% $\forall i\in I$. Using an underlying partition of unity
% $(\varphi_i)_{i\in I}$ we define 
% $$f:=\sum_{i\in I}\varphi_i f_i.$$
% Then 
$f|_N\equiv 0$ and $df|_N=\te$, as required, thus the metric
$g^M:=e^{-2f}g$ restricts to $g^N$ on $N$ and its Levi-Civita
connection $\nb^M=\nb^g+\widetilde{df}$ induces the required mean curvature.   
\end{proof}
% In the particular case $H\equiv 0$, we obtain
% \begin{cor}\label{adpt} For every Weyl structure $\nb^N$,
%   resp. compatible metric $g^N$ on an embedded
%   submanifold $N$ of a conformal manifold $(M,c)$, there exists a Weyl
%   structure $\nb^M$, resp. a compatible metric $g^M$ on $(M,c)$
%   inducing $\nb^N$, resp. $g^N$ on $N$, and such that $N$ is {\em
%     minimally} embedded in $M$, for this connection / metric (i.e.,
%   the mean curvature vanishes everywhere).\end{cor}
% \begin{rem}\label{succ} If we have two succesive embeddings $P\subset
%   N\subset M$, then it is easy to see that
%   $\phi^M_P=\phi^M_N\circ\phi^N_P$ if $N\subset M$ is
%   totally umbilical. In this case, a Weyl structure
%   on $M$ that is adapted to the embedding $P\subset M$ is
%   automatically (along $P$) adapted to the embedding of $N$ in
%   $M$. \end{rem}
 In the case of a curve in $(M,c)$, Propositions \ref{adpt} and
 \ref{transcx} show that 
\begin{cor}\label{adapt} Every embedded curve $C$ in a conformal-Riemannian manifold
  $(M,c)$ is $\nb$--geodesic for some suitable ({\em adapted}) Weyl structure on
  $M$. The restriction to $TM|_C$ of this Weyl structure is determined
  by a connection $\nb^C$ on $C$ (induced by $\nb$). If $\nb^C$ is
  defined by a parametrization of $C$, $\nb$ can be chosen to be the
  Levi-Civita connection of a metric on $M$.\end{cor}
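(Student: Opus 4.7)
The plan is to derive this corollary directly from Propositions \ref{adpt} and \ref{transcx}, exploiting the fact that a $1$-dimensional manifold carries no non-zero trace-free symmetric bilinear forms.

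First, I would handle the existence statement. Pick any Weyl structure $\nb^C$ on $C$, which is the same as picking a parametrization $\gamma:I\to C$ satisfying $\nb^C_{\dg}\dg=0$. Proposition \ref{adpt} produces, via the adapting map $\phi^M_C$, an adapted Weyl structure $\nb:=\phi^M_C(\nb^C)$ on $M$ along $C$, meaning $H^\nb=0$ on $C$. Because $\dim C=1$ forces $S^2_0(T^*C)=0$, the decomposition \eqref{fform} reduces to $B^\nb=H^\nb\cdot c$, so the vanishing of $H^\nb$ gives $B^\nb=0$ and hence $(\nb_{\dg}\dg)^\perp=0$. On the other hand $(\nb_{\dg}\dg)^N=\nb^C_{\dg}\dg=0$ by construction, so altogether $\nb_{\dg}\dg=0$ along $C$, exhibiting $\gamma$ as a $\nb$-geodesic. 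Any extension of $\nb$ to a Weyl structure on all of $M$ then serves; only the values of $\nb$ along $C$ matter for the geodesic equation.

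The uniqueness of $\nb|_{TM|_C}$ given $\nb^C$ is nothing but the first part of Proposition \ref{adpt}: the adapting map is a right inverse to restriction, so the assignment $\nb^C\mapsto\phi^M_C(\nb^C)\in\weyl(M)^C\subset\weyl(M)|_C$ is a bijection, and in particular the restriction of $\nb$ is entirely determined by the induced connection $\nb^C$ on $C$.

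For the last claim, suppose $\nb^C$ comes from a parametrization $\gamma$ of $C$; equivalently, $\nb^C$ is the Levi-Civita connection of the metric $g^C$ on $C$ defined by that parametrization. Applying Proposition \ref{transcx} with $N=C$ and prescribed normal co-field $H=0$, I obtain a metric $g^M$ on $M$ that restricts to $g^C$ along $C$ and whose Levi-Civita connection has vanishing mean curvature along $C$. Setting $\nb:=\nb^{g^M}$, the same dimensional collapse argument as above gives $B^\nb=0$, while $\nb|_{TC}$ is the Levi-Civita connection of $g^C$, i.e.\ $\nb^C$; so $\nb_{\dg}\dg=0$ and $\nb$ is the required Weyl structure realized as a Levi-Civita connection.

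There is no real obstacle: the technical content lies entirely in Propositions \ref{adpt} and \ref{transcx}, and the only genuine ingredient used here is the triviality of $S^2_0(T^*C)$, which promotes the pointwise condition \emph{adapted} (zero mean curvature) to the stronger condition \emph{totally geodesic along $C$}.
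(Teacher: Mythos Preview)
Your proof is correct and follows exactly the approach indicated in the paper, which simply cites Propositions \ref{adpt} and \ref{transcx} as the source of the corollary; you have merely made explicit the key (and only) additional ingredient, namely that $S^2_0(T^*C)=0$ for a curve, so that the adapted condition $H^\nb=0$ automatically upgrades to $B^\nb=0$.
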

In other words, every parametrized curve is a geodesic for a suitable metric.
\begin{rem} In the pseudo-Riemannian conformal setting, this corollary
  still holds for embedded curves that are nowhere
  light-like.\end{rem}
We remark the following, concerning the
  connection induced on the normal bundle of $N$:
\begin{prop}\label{cxnorm} Let $\nb$ be a Weyl structure on $M$ and
  $\nb^N$ the induced Weyl structure on the submanifold $N$. Denote by
  $\nb^\perp$ the connection induced by $\nb$ on the normal bundle
  $\nu $  of $N$ in $M$:
\be\label{cxn}\nb^\perp_X\xi:=(\nb_X\xi)^\perp,\ \forall X\in TN.\ee
Then $\nb^\perp$ depends only on $\nb^N$. Moreover, the connection
$\nb^{\perp,0}$ induced on the weightless normal bundle
  $\nu^0:=\nu\otimes L^{-1}$ depends only on the embedding of $N$ in
  the conformal manifold $(M,c)$. In
  particular, its curvature 
\be\label{kap} \kappa\in\Lb^2\ot\End^{skew}(\nu)\ee
is a tensor that depends on the conformal embedding alone.\end{prop}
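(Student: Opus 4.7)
The plan is to establish the three claims in sequence by tracking how the tensor $\tilde\theta_XY$ decomposes along $TN\opr\nu$ under a Weyl transformation $\nb'=\nb+\tilde\theta$, using the formulas \eqref{e1}--\eqref{e2} and \eqref{der}.

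For the first claim, I would assume $(\nb')^N=\nb^N$ and deduce that $\theta|_{TN}=0$. For $X,Y\in TN$, the tangential part of
\[\tilde\theta_XY=\theta(Y)X+\theta(X)Y-c(X,Y)\theta\]
must vanish, giving $\theta(Y)X+\theta(X)Y=c(X,Y)\theta^N$. Contracting this equation with $X$ via $c$, and using $\theta(X)=c(\theta^N,X)$, cancels two of the three terms and leaves $\theta(Y)\,c(X,X)=0$; choosing a non-null $X\in TN$ (which exists since $c$ is positive-definite) forces $\theta|_{TN}=0$. Then for $\xi\in\nu$ and $X\in TN$ one has $c(X,\xi)=0$, so
\[\tilde\theta_X\xi=\theta(\xi)X+\theta(X)\xi,\]
whose normal component is $\theta(X)\xi=0$. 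Hence $(\nb')^\perp_X\xi=\nb^\perp_X\xi$, establishing that $\nb^\perp$ is determined by $\nb^N$.

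For the second claim, I would drop the restriction $(\nb')^N=\nb^N$ and track the induced connections on $\nu$ and $L^{-1}$ separately. The computation above, now without using $\theta|_{TN}=0$, still gives $(\nb')^\perp_X\xi-\nb^\perp_X\xi=\theta(X)\xi$ for $X\in TN$ and $\xi\in\nu$. By \eqref{der} with $k=-1$, the induced connection on $L^{-1}$ shifts by $-\theta(X)$ on any section. On $\nu^0=\nu\otimes L^{-1}$ the two shifts cancel, so $\nb^{\perp,0}$ is the same for every choice of Weyl structure on $M$ and therefore depends only on the embedding $N\subset(M,c)$.

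The third claim then follows immediately: the curvature $\kappa$ of $\nb^{\perp,0}$ is a tensor, and since $\nb^{\perp,0}$ is intrinsic to the embedding, so is $\kappa$. Skew-symmetry comes from the fact that $\nu^0$, having conformal weight zero, carries a genuine Riemannian metric inherited from $c$ that is preserved by the conformal connection $\nb^{\perp,0}$; hence $\kappa$ takes values in $\End^{skew}(\nu^0)=\End^{skew}(\nu)$. The only delicate point is the algebraic extraction of $\theta|_{TN}=0$ in the first claim; the remaining assertions are then routine bookkeeping with the transformation rules.
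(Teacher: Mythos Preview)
Your proof is correct and follows essentially the same approach as the paper's: the key computation is that for $X\in TN$ and $\xi\in\nu$ the endomorphism $(\theta\wedge X)$ applied to $\xi$ lands in $TN$ (since $c(X,\xi)=0$), so the normal part of $\tilde\theta_X\xi$ is just $\theta(X)\xi$, which then cancels against the weight shift on $L^{-1}$. The paper states this in one line as ${\nb'}^\perp=\nb^\perp+\theta\otimes\idt$ and leaves the first claim implicit (it follows at once from the fundamental theorem on $N$, since $(\nb')^N=\nb^N+\widetilde{\theta^N}$ forces $\theta|_{TN}=0$); your explicit algebraic extraction of $\theta|_{TN}=0$ is a perfectly valid alternative, just slightly more hands-on than invoking the bijection between Weyl structures on $N$ and connections on $L_N$.
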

\begin{proof} Let $\nb'=\nb+\tilde\te$, with $\te$ a $1$--form on $M$.
  Then ${\nb'}^\perp=\nb^\perp+\te\ot\idt$, because the endomorphism of
  $TN^\perp$ induced by $\te\we X$ is trivial, for all $X\in TN$. The
  connection ${\nb'}^{\perp,0}$ induced on $L^{-1}\ot\nu$ coincides
  thus with $\nb^{\perp,0}$. 
\end{proof}
% \begin{rem} This induced metric connection $\nb^{\perp,0}$ on $\nu^0$
%   (note that $c$ induces a metric on every weightless bundle) is an
%   invariant of the conformal embedding $N\subset (M,c)$. \end{rem}

We have seen before that the normal component of the ambient
Schouten-Weyl tensor on an embedded curve (which is always totally
umbilical), for an adapted Weyl structure, is an invariant that
vanishes if and only if the curve is a conformal (resp. M\"obius)
geodesic: this invariant is the normal component of the conformal
acceleration, see Lemma \ref{accel}. We generalize this fact and define the
{\em mixed Schouten-Weyl tensor of an embedding} as follows:
   \begin{prop}\label{mixt}
Let $N\subset M$ such that $m=\dim M\ge 3$ or $m=2$ and $M$ is endowed
with a fixed M\"obius structure $\M$. The {\em mixed Schouten-Weyl tensor} 
\be\label{mix}\mx:TN\ra\nu^*,\ \mx_X(\xi):=h^M(X,\xi)-\left(\nb_XH^\nb\right)(\xi)+
\frac1{n-1}(\delta^\nb B_0)(X)(\xi)\ee
is independent of the Weyl structure $\nb$ on $M$ used to compute the
Schouten-Weyl tensor $h^M:=h^\nb$, the mean curvature $H^\nb$ and the
codifferential  
$$(\delta^\nb B_0)(X):=\tr^c_{TN}\left((\nb_\cdot
  B_0)(X,\cdot)\right),$$ 
Here $\nb H^\nb$ and $\nb B_0$ are the covariant derivatives induced
by $\nb$ on $\nu$, resp. on $S^2_0N\ot\nu$. The convention for $n=1$
is to omit the term in $\delta^\nb B_0$ ($B_0$ is trivial anyway) in \eqref{mix}.
%The conformal gedesics of $N$ are -- as unparametrized curves --
%conformal geodesics in $M$ iff $B_0\equiv 0$ and $\mx\equiv 0$.
\end{prop}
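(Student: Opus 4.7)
The plan is to write $\nabla' = \nabla + \tilde\theta$ for an auxiliary Weyl structure and to show that the induced changes $\Delta_1,\Delta_2,\Delta_3$ in the three summands of $\mu^\nabla_X(\xi)$ sum to zero. First, applying \eqref{scht} to the Schouten-Weyl term, and using that $X \in TN$ and $\xi \in \nu$ are $c$-orthogonal so that the pure-trace piece $\tfrac12 c(\theta,\theta)c(X,\xi)$ drops out, I obtain
$$\Delta_1 := h^{\nabla'}(X,\xi) - h^\nabla(X,\xi) = -(\nabla_X\theta)(\xi) + \theta(X)\theta(\xi).$$

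Next, I would combine the transformation rule $H^{\nabla'} = H^\nabla - \theta^\perp$ from \eqref{mc} with the Weyl-change rule on $\nu^* \simeq L^{-2}\otimes \nu$. Proposition \ref{cxnorm} identifies the change on $\nu$ as $+\theta(X)\,\mathrm{id}$, which combined with the $-2\theta(X)$ change on $L^{-2}$ gives $(\nabla'_X - \nabla_X)\eta = -\theta(X)\eta$ on every section $\eta$ of $\nu^*$. Expanding $\nabla'_X H^{\nabla'}$ accordingly yields
$$\Delta_2 := (\nabla'_X H^{\nabla'})(\xi) - (\nabla_X H^\nabla)(\xi) = -\theta(X)H^\nabla(\xi) - (\nabla_X\theta^\perp)(\xi) + \theta(X)\theta(\xi).$$

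The central computation is $\Delta_3$. Although $B_0$ is $\nabla$-invariant by Proposition \ref{Binv}, its covariant derivative sees the induced Weyl change $\widetilde{\theta^N}$ on the two $TN$-slots and $\theta(Z)\,\mathrm{id}$ on the $\nu$-slot. Expanding $(\nabla'_Z - \nabla_Z)B_0$ produces five terms; tracing with $c^{-1}$ over $Z$ and one of the symmetric slots, and invoking $\mathrm{tr}_c B_0 = 0$ to kill one of them, I expect to find
$$(\delta^{\nabla'} - \delta^\nabla) B_0(X) = (n-1)\, B_0(X, \theta^{N,\sharp}),$$
so that $\Delta_3 = c(B_0(X,\theta^{N,\sharp}),\xi)$. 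The constant $n-1$ arises as the algebraic sum $-2+1+n$, coming from two $\theta^N$-contractions against $B_0(X,\cdot)$, a symmetry-induced reindexing of the fourth term, and $\mathrm{tr}_c c = n$; this is precisely what forces the factor $\tfrac{1}{n-1}$ in \eqref{mix}.

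To close the argument, I would derive a conformal Weingarten identity by differentiating the identically vanishing pairing $c(Y,\xi)$ with $Y \in TN$, $\xi \in \nu$: using $\nabla c = 0$ this gives $c(Y, (\nabla_X\xi)^N) = -c(B^\nabla(X,Y),\xi)$ for every $Y \in TN$. Substituting $Y = \theta^{N,\sharp}$ and decomposing $B^\nabla = H^\nabla \cdot c + B_0$ yields
$$(\nabla_X\theta)(\xi) - (\nabla_X\theta^\perp)(\xi) = \theta(X)H^\nabla(\xi) + c(B_0(X,\theta^{N,\sharp}),\xi),$$
after which $\Delta_1 - \Delta_2 + \Delta_3 = 0$ by direct substitution. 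The main difficulty will be the bookkeeping in the $\Delta_3$ step: keeping consistent track of the sharp/trace conventions and weights among the five terms that arise, and identifying precisely the combination that produces the coefficient $n-1$ expected from the statement.
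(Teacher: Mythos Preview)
Your proof is correct and proceeds by the same underlying idea as the paper's---verify that the three summands' Weyl-changes cancel---but organizes the computation differently. The paper splits the argument in two: first it takes $\theta$ purely normal ($\theta|_{TN}=0$), so that neither the induced connection on $TN$ nor the one on $\nu$ changes; then $\delta^\nabla B_0$ is untouched and one only has to match the change in $h^M$ against that of $\nabla H^\nabla$, which is immediate from \eqref{scht} and \eqref{mc}. Having thus reduced to \emph{adapted} Weyl structures (where the $H^\nabla$-term disappears), the paper takes $\theta$ purely tangential and computes exactly your formula $(\delta^{\nabla'}-\delta^\nabla)B_0(X)=(n-1)B_0(X,\theta)$, after which the remaining cancellation with the change in $h^M$ is again a one-liner from \eqref{scht}. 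Your single-pass approach with general $\theta$ is equally valid; the price you pay is the extra Weingarten-type identity needed to relate $(\nabla_X\theta)(\xi)-(\nabla_X\theta^\perp)(\xi)$ to $H^\nabla$ and $B_0$, which the paper's decomposition $\theta=\theta^N+\theta^\perp$ sidesteps entirely. Both arguments hinge on the same $(n-1)$-computation for $\Delta_3$, so the mathematical content is identical; the paper's two-step version is marginally more economical, while yours is more self-contained.
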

\begin{proof} 
Proposition \ref{cxnorm} implies that, for two Weyl structures $\nb$,
$\nb'=\nb+\tilde\te$ that extend to $M$
the same Weyl structure $\nb^N$ on $N$ (thus $\te$ vanishes on
$TN$), the connection used to compute the covariant derivative of the mean curvature does not change
(but the mean curvature changes according to \eqref{mc}). Moreover,
the codifferential of $B_0$ (which is a conformal invariant itself)
does not change at all. 

By taking the difference of the tensors $\mx$ computed using $\nb$,
resp. $\nb'$ in \eqref{mix}, the terms in the codifferential cancel,
and from the terms in $H$, only a term in $\nb(H^{\nb'}-H^\nb)$
remains, and it is equal to $-\nb\te$, according to \eqref{mc}.

On the other hand, using \eqref{scht} for the vectors $X\in TN$, $\xi\in\nu$,
we get 
$$(h^{\nb'}-h^\nb)(X,\xi)=-\nb_X\te(\xi),$$
 and this cancels with the term in $\nb
H$ as seen above.

Therefore, in computing $\mx$ via \eqref{mix}, we can choose Weyl
structures adapted to the embedding. The expression then simplifies
as the terms in $H$ disappear.

Let $\nb$ and $\nb'$ be adapted Weyl structures on $M$ that
  satisfy \eqref{e1}, such that $\te|_N$ is purely tangential. Using
  \eqref{scht} and \eqref{mc}, we compute
\be\begin{split}
(\delta^{\nb'} B_0)(X)-(\delta^\nb
B_0)(X)&=\tr^c_{TN}\left((\tilde\te_\cdot
  B_0)(X,\cdot)\right)-\tr^c_{TN}B_0(\tilde\te_\cdot X,\cdot)-\tr^c_{TN}B_0(X,\tilde\te_\cdot \cdot) \\
&=(n-1) B_0(X,\te)=(n-1) B^\nb(X,\te)\\
&=(n-1)(\nb_X\te)|_{\nu^*},\label{delt}
\end{split}\ee
because the first term in the right hand side of the first line is equal to $B_0(X,\te)$, the second vanishes
and the third is equal to $(n-2)B_0(X,\te)$. Note that the result is
indeed a section of $\nu\ot L^{-2}\simeq \nu^*$.

The claim follows now from \eqref{delt} and \eqref{scht} (for $n=1$,
the equation \eqref{delt} is not needed, the claim follows directly
from the reduction to adapted connections and \eqref{scht}).
\end{proof}
Of course, in a conformally flat space (locally isomorphic to the
M\"obius sphere), a totally umbilical submanifold of dimension at least
2 is a piece of a sphere, so the embedding is conformally equivalent
to $\R^n\subset \R^m$. This means that the mixed Schouten-Weyl tensor
automatically vanishes for totally umbilical submanifolds in
a conformally flat space. The case where the ambient space is curved
is different, as the following example shows:
\begin{example}
Cf. \cite{cp}, a {\em conformal product} is a conformal manifold $(M,c)$
such that a Weyl structure $\nb$ exists, that preserves  an orthogonal
pair of foliations, thus locally $M\simeq M_1^{m_1}\x M_2^{m_2}$. A simple example
of a conformal product is given by the conformal class of a Riemannian
product (with $\nb$ equal to the Levi-Civita connection of the product
metric). However, if the conformal product is {\em non-closed} (the
Weyl structure $\nb$ does not (even locally) preserve any metric,
i.e. its Faraday form $F$ is non-vanishing, then from Lemma 6.1
\cite{cp} we know that the Schouten-Weyl tensor of $\nb$, computed for
$X_1,X_2$ each tangent to one of the foliations, has the following
expression:  
\be\label{61}h^\nb(X_1,X_2)=\frac12\left(\frac{m_1-m_2}{m_1+m_2-2}-
  1\right)F(X_1,X_2).\ee  
Therefore, the leaves of each foliation are totally umbilical
(because they are totally geodesic w.r.t. $\nb$), but the mixed
Schouten-Weyl tensor is non-zero if $m_2>1$.\end{example}
Here we see that, for $m_2=1$, the mixed Schouten-Weyl tensor of the
first foliation vanishes. This is a general fact:
\begin{prop}\label{mu0} Let $(M,c)$ be a conformal manifold of dimension at least 3. Then the mixed Schouten-Weyl tensor of any hypersurface $N\subset
  M$ is identically zero.\end{prop}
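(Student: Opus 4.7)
The strategy is to exploit the invariance of $\mu$ under the choice of Weyl structure, as established in Proposition \ref{mixt}, in order to reduce the statement to a classical Gauss--Codazzi computation for the Levi-Civita connection of a well-chosen metric.

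First, by Proposition \ref{transcx}, I would pick an adapted metric $g\in c$ so that the mean curvature $H^g$ of the hypersurface $N$ vanishes identically on $N$, and work throughout with $\nabla=\nabla^g$. With this choice the fundamental form coincides with its trace-free part, $B^\nabla=B_0$, and moreover $\nabla_X H^\nabla=0$ for every $X\in TN$, because $H^\nabla$ is the zero section of $\nu^*$ along $N$. Hence, evaluated on $X\in TN$ and $\xi\in\nu$, the defining formula \eqref{mix} reduces to
\[
\mu_X(\xi)\;=\;h^M(X,\xi)\;+\;\tfrac{1}{n-1}(\delta^\nabla B_0)(X)(\xi).
\]

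Next I would compute $h^M(X,\xi)$ via the Codazzi equation. For $X,Y,Z\in TN$ the standard decomposition $\nabla^M_X Y=\nabla^N_X Y+B(X,Y)$, together with the Weingarten formula, yields
\[
\bigl(R^M_{X,Y}Z\bigr)^\perp=(\nabla^N_X B)(Y,Z)-(\nabla^N_Y B)(X,Z).
\]
Tracing in a local $g$-orthonormal frame $\{e_i\}$ of $TN$ chosen to be parallel at a given point, the first sum $\sum_i(\nabla^N_X B)(e_i,e_i)$ equals $\nabla^\perp_X(\operatorname{tr}B)=n\,\nabla^\perp_X H^\nabla=0$, and the Bianchi identity / skew-symmetries give
\[
\mathrm{Ric}^g(X,\xi)=\sum_i g\bigl((R^g_{X,e_i}e_i)^\perp,\xi\bigr)=-(\delta^g B_0)(X)(\xi).
\]
Since $g(X,\xi)=0$, the pure trace part of $h^g$ (as in \eqref{RSg}) contributes nothing, so $h^g(X,\xi)=\tfrac{1}{m-2}\mathrm{Ric}^g(X,\xi)$.

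The final observation is the numerical coincidence that, for a \emph{hypersurface}, $n=m-1$, hence $m-2=n-1$. Combining the two displays,
\[
h^M(X,\xi)=-\tfrac{1}{n-1}(\delta^\nabla B_0)(X)(\xi),
\]
which exactly cancels the second term in the reduced expression for $\mu_X(\xi)$, giving $\mu\equiv 0$. I expect the only subtlety to be keeping track of conformal weights when comparing $h^M$ and $\delta B_0$ as sections of $\nu^*$; working with an adapted metric trivialises $L$ along $N$ and turns the argument into the classical Riemannian Codazzi computation, which is precisely where the dimensional identity $m-2=n-1$ makes the two terms match on the nose.
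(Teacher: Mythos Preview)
Your argument is correct and follows essentially the same route as the paper: choose an adapted (metric) Weyl structure so that $H=0$ and $B=B_0$, use the Codazzi equation and trace over $TN$ to obtain $\mathrm{Ric}^g(X,\xi)=-(\delta^\nabla B_0)(X)(\xi)$ (the key point being that for a hypersurface the trace over $TM$ reduces to the trace over $TN$), and then invoke $m-2=n-1$ to cancel the two terms in \eqref{mix}. The only cosmetic difference is that you cite Proposition~\ref{transcx} for the existence of the adapted metric and spell out the frame computation, whereas the paper jumps straight to the traced Codazzi identity.
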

\begin{proof} This is a consequence of the Gauss and Codazzi
  equations: using the notations from Proposition \ref{mixt}, we have
 \bea c(R^{\nb^M}_{X,Y}Z,\xi)&=&c(\nb_X B^{\nb^M}(Y,Z)- \nb_Y
 B^{\nb^M}(X,Z),\xi),\\
c(R^{\nb^M}_{X,Y}\xi,\zeta)&=&c(R^{\nb^\perp}_{X,Y}\xi,\zeta)+
c(B^{\nb^M}(X)(\xi),B^{\nb^M}(Y)(\zeta))- 
c(B^{\nb^M}(Y)(\xi),B^{\nb^M}(X)(\zeta)).\eea
Supposing that $\nb^M$ is adapted and metric (so we have no Faraday form), (so $B^{\nb^M}=B_0$) and taking the
trace on $TN$ in the first equation in $Y$ and $Z$, we obtain
$$\ric^{\nb^M}_0(X,\xi)=-\delta^{\nb^M}B_0(X)(\xi),$$
because
$\ric^{\nb^M}(X,\xi)=\tr_{TM}R^{\nb^M}_{\cdot,X}\xi=\tr_{TN}R^{\nb^M}_{\cdot,X}\xi$,
if $TN^\perp$ is generated by $\xi$. 
For the Schouten-Weyl tensor of $\nb^M$, this implies
$$h^{\nb^M}(X,\xi)=-\frac1{m-2}\delta^{\nb^M}B_0(X)(\xi).$$
Using $n=m-1$ (as $N$ is a hypersurface), then \eqref{mix} implies
$\mu\equiv 0$.\end{proof}
\begin{rem} Note that for
 $n=1$ and $m=2$, the computations above do not apply; in fact, for a
 curve $N$ in a M\"obius surface $(M,c,\M)$, the mixed Schouten-Weyl
 tensor is simply the normal part of the conformal (or rather
 M\"obius) acceleration (Lemma \ref{accel}), and thus vanishes if and
 only if the curve is a M\"obius geodesic. \end{rem}

% On the other hand, $\mx$ can vanish even on submanifolds that are not
% totally umbilical: 
% \begin{example} Let $N:=S^k\times \R^{n-k}\subset \R^{n+1}$, with the
%   induced metric (conformal structure). If $0<k<n$, $N$ is nowhere
%   umbilic, its fundamental form has two eigenvalues ($1$ for the $S^k$
%   part, and $0$ for the $\R^{n-k}$ part), and it is, in fact,
%   parallel for the induced connection. Therefore $\mx\equiv 0$, but
%   $B_0\ne 0$.\end{example}
Another invariant derived from the Schouten-Weyl tensor is the {\em
  relative Schouten-Weyl tensor of an embedding}:
\begin{prop}\label{rel}
Let $(M^m,c)$ be a conformal manifold of dimension $m\ge 3$ or a
M\"obius surface. Let $N^n\subset M^m$ such that $n=\dim N\ge 3$, or $N$ is a M\"obius
surface with the same underlying conformal
structure as the one induced by $M$, or a Laplace curve, and let $\nb$ be any Weyl
structure on $M$. We denote as before by $h^M$ and $H^\nb\in\nu^*$ the
Schouten-Weyl tensor of $\nb$ on $M$ and the mean curvature of $N\subset M$
with respect to $\nb$. Let $h^N$ be the Schouten-Weyl tensor of $N$
corresponding to the Weyl structure on $N$ induced by $\nb$ (see
Remarks \ref{schtm} and \ref{schtl} for the low-dimensional cases). Then the
{\em relative Schouten-Weyl tensor} $\rs\in S^2N$, defined by 
\be\label{rels}\rs(X,Y):=(h^M-h^N)(X,Y)+\frac{1}{2}c(H^\nb,H^\nb)c(X,Y)
+H^\nb(B_0(X,Y)),\ \forall X,Y\in TN,\ee
is independent of the Weyl structure $\nb$.  
\end{prop}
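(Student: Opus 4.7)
The plan is to show invariance by comparing two arbitrary Weyl structures $\nb'=\nb+\tilde\te$ on $M$, after splitting $\te|_N=\te^T+\te^\perp$ into tangential and normal components. First I would record how each ingredient of \eqref{rels} transforms: the induced Weyl structure on $N$ changes by $\tilde{\te^T}$, so $h^N$ transforms according to \eqref{scht} applied to $N$ with the $1$--form $\te^T$ (and, for $n=2$ or $n=1$, by the analogous rules \eqref{h0}, \eqref{s0} for the M\"obius, resp. Laplace Schouten-Weyl tensor, from Propositions \ref{moeb-h} and \ref{laplace}). Meanwhile $h^M$ transforms by \eqref{scht} with the full $1$--form $\te$, $H^\nb$ changes by $-\te^\perp$ thanks to \eqref{mc}, and $B_0$ is conformally invariant by Proposition \ref{Binv}.

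The key identity, which does the bulk of the work, is
$$(\nb_X\te)(Y)=(\nb^N_X\te^T)(Y)-\te^\perp(B^\nb(X,Y)),\qquad X,Y\in TN,$$
obtained by decomposing $\nb_XY=\nb^N_XY+B^\nb(X,Y)$ and using $\te(Y)=\te^T(Y)$ for $Y\in TN$. Expanding the last term through \eqref{fform} gives $\te^\perp(B^\nb(X,Y))=c(H^\nb,\te^\perp)c(X,Y)+\te^\perp(B_0(X,Y))$, producing exactly the two correction terms needed to match the changes of the remaining pieces of $\rho$.

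Plugging these expressions into $\rho^{\nb'}(X,Y)-\rho^\nb(X,Y)$, the quadratic parts $\te\ot\te-\tfrac12 c(\te,\te)c$ from \eqref{scht} for $h^M$ and $h^N$ partially cancel: since $\te(X)=\te^T(X)$ on $TN$ and $c(\te,\te)=c(\te^T,\te^T)+c(\te^\perp,\te^\perp)$, the only surviving piece is $-\tfrac12 c(\te^\perp,\te^\perp)c(X,Y)$. This piece is absorbed by the $\tfrac12 c(\te^\perp,\te^\perp)c(X,Y)$ coming from $\tfrac12\bigl(c(H^{\nb'},H^{\nb'})-c(H^\nb,H^\nb)\bigr)$, while the cross term $-c(H^\nb,\te^\perp)c(X,Y)$ from the same expansion and the change $-\te^\perp(B_0(X,Y))$ of the last term of $\rho$ cancel precisely the two contributions produced by the key identity. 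Hence $\rho^{\nb'}=\rho^\nb$.

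The main obstacle is just the bookkeeping: the tangential/normal splitting must be tracked through each term, and one has to be consistent with the identifications $\nu^*\simeq L^{-2}\ot\nu$ and $T^*M\simeq TM\ot L^{-2}$ to read $\te^\perp(H^\nb)$ as $c(H^\nb,\te^\perp)$. The low-dimensional cases ($n\in\{1,2\}$, or $m=2$ endowed with a M\"obius structure) need no separate argument, because the transformation rules \eqref{h0} and \eqref{s0} provided by Propositions \ref{moeb-h} and \ref{laplace} coincide with the rules used in the higher-dimensional computation, and the skew-symmetric part $-\tfrac12 F^\nb|_{TN}$ of $h^M-h^N$ vanishes since the induced connection on $L^N$ is the restriction of $\nb^L$.
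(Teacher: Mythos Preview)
Your argument is correct and uses the same ingredients as the paper's proof: the transformation rule \eqref{scht}, the change of mean curvature \eqref{mc}, and the decomposition of $\nb_XY$ into its tangential and normal parts. The only difference is organizational: the paper first fixes the induced Weyl structure on $N$ (i.e.\ takes $\te^T=0$) to reduce to adapted connections, and then compares two adapted connections (i.e.\ $\te^\perp=0$), whereas you handle both components of $\te$ simultaneously via your key identity $(\nb_X\te)(Y)=(\nb^N_X\te^T)(Y)-\te^\perp(B^\nb(X,Y))$; this single-pass version is a slightly cleaner packaging of the same computation, and your closing remark on the vanishing of the skew-symmetric part of $h^M-h^N$ (hence $\rho\in S^2N$) is a point the paper leaves implicit.
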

\begin{proof}
As in the proof of Proposition \ref{mixt}, we treat first the case of
two Weyl structures $\nb$, $\nb'=\nb+\tilde\te$ on $M$ that restrict
to the same connection $\nb^N$ on $N$, i.e., $\te|_{TN}\equiv 0$.

In the expressions \eqref{rels} corresponding to $\nb$, resp. $\nb'$,
the terms in $h^N$ are the same, and the difference of the terms in $h^M$
is, cf. \eqref{scht}:
\be\label{relh}h^{\nb'}-h^\nb=-\nb\te+\te\ot\te-\frac12
c(\te,\te)c,\ee
where the middle term of the right hand side, restricted to $TN\ot TN$, vanishes. The term in
$\nb\te$ is, in fact, an expression in the fundamental form alone:
$$-\nb_X\te(Y)=\te(\nb_XY)=c(\te,H^\nb)c(X,Y)+\te(B_0(X,Y)),\ \forall
X,Y\in TN,$$
because $\te(X)=\te(Y)=0$. We conclude
$$(h^{\nb'}-h^\nb)|_{TN\ot TN}=c(\te,H^\nb)c+\te(B_0)-\frac12
c(\te,\te)c.$$
It is clear from \eqref{mc} that the right hand side of the equation
above cancels the difference
$$\left(\frac12 c(H^{\nb'},H^{\nb'})c + H^{\nb'}(B_0)\right)-
\left(\frac12 c(H^{\nb},H^{\nb})c + H^{\nb}(B_0)\right).$$
This means that, in the expression \eqref{rels}, we can restrict to
adapted Weyl structures.

Let $\nb$ and $\nb'=\nb+\tilde\te$ be adapted Weyl structures. Then
the expressions \eqref{rels} for $\nb$, resp. $\nb'$, simplify and
their difference is given by
$$(h^{{\nb'}^M}-h^{\nb^M})|_{TN\ot TN}-(h^{{\nb'}^N}-h^{\nb^N}).$$
Applying \eqref{scht} for both the pairs $({\nb'}^M,\nb^M)$ and (the
induced connections) $({\nb'}^N,\nb^N)$, we obtain the same result,
because $\te$ is tangential to $N$.

This shows the independence of $\rs$ of the Weyl structure $\nb$, as claimed.
\end{proof}
Like $\mx$, $\rs$ vanishes on a totally umbilical submanifold of a
conformally flat space. However, in the curved setting, the vanishing
of $B_0$, and even of $\mx$, does not imply the vanishing of $\rs$:
\begin{example} Let $M$ be the Riemannian product $M_1\times M_2$, and
  let $N:=M_1\times \{x\}$, where $x\in M_2$ (we assume $\dim M_i>2$,
  $i=1,2$). $N$ is totally geodesic in $M$, for
  the product metric (which is therefore adapted to the embedding),
  hence $B_0=0$, and, also for the product
  metric, $\mx$ vanishes. On the other hand, even if the Ricci tensor
  of $N$ is the restriction to $N$ of the Ricci tensor of $M$ (again
  for the product metric), this property does not hold in general
  for the Schouten-Weyl tensors, because the normalizations depend on
  the scalar curvatures of $M$, resp. $N$, and on the dimensions $m$
  and $n$. For example, if $M_1$ is an
  Euclidean space and $M_2$ is a
  round sphere, $h^{M_1\times M_2}|_N\ne h^{M_1}=0$.\end{example} 

One can ask whether the Gauss and Codazzi type equations imply any
further relation between the invariants $B_0$, $\nb^\nu$, $\mu$ and
$\rho$ of a conformal embedding $N\subset M$ (besides the vanishing of
$\mu $ for hypersurfaces, cf. Proposition \ref{mu0}). The following Theorem
proves the contrary: on any conformal manifold (or M\"obius surface,
or Laplace curve) one can prescribe all the invariant tensors defined
in this section: $B_0$, $\kappa$, $\mu$\footnote{for an embedding of
  $N$ as a hypersurface, $\mu$ has to be zero, cf. Proposition \ref{mu0}} and $\rho$ and realize them by
a suitable conformal embedding:
\begin{thrm}\label{existt} Let $(N,g)$ be a Riemannian manifold and
  let $\nu$ be a vector bundle over $N$ endowed with a metric $g^\nu$ and a
  connection $\nb^\nu$. If $\dim N=1$ or $2$, let $N$ have a fixed
  Laplace, resp. M\"obius
  structure, given by the Schouten-Weyl tensor $h$ (associated to the
  Levi-Civita connection $\nb$ of $g$). 

Let $B_0\in\ci(S^2_0N\ot \nu)$
  and $\rho\in\ci(S^2N)$ be given
  tensors; if $\rank\,\nu>1$ or if $\dim N=1$ and
  $\rank\,\nu=1$, let
  $\mu\in\ci(T^*N\ot\nu^*)$  be a given arbitrary section, otherwise
  let $\mu$ be the zero section of this bundle. 

Then there exists a metric $\tl g$ on the total space $M$ of
  $\nu$, and, for $\dim N=\rank\,\nu=1$, there exists a M\"obius
  structure on $M$, such that the fundamental form of $N$ with respect
  to $\tl g$ is $B_0$, and the mixed, resp. relative Schouten-Weyl
  tensors of $N$ in $M$ are equal to $\mu$, resp. $\rho$.\end{thrm}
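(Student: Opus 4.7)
The plan is to construct $\tl g$ from its second jet along the zero section $N\subset\nu=M$. Using the connection $\nb^\nu$ to define a horizontal distribution on the total space of $\nu$, I would pick fibered coordinates $(x^i,y^\a)$ on a tubular neighborhood of $N$ such that $N=\{y=0\}$, the fibers of $\nu$ are $\{x=\mathrm{const}\}$, $\partial/\partial y^\a|_{y=0}$ is an orthonormal frame of $(\nu,g^\nu)$, and the $x$-coordinates are horizontal. I then write an unknown metric
\[
\tl g=g_{ij}(x,y)\,dx^i dx^j+2g_{i\a}(x,y)\,dx^i dy^\a+g_{\a\b}(x,y)\,dy^\a dy^\b
\]
and determine its Taylor expansion in $y$ order by order.

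At order zero I set $g_{ij}(x,0)=g_{ij}(x)$, $g_{\a\b}(x,0)=g^\nu_{\a\b}(x)$, $g_{i\a}(x,0)=0$, which makes $(N,\tl g|_N)=(N,g)$, the normal bundle isometric to $(\nu,g^\nu)$, and orthogonal to $TN$. At order one, the first $y$-derivatives of $g_{i\a}$ are chosen so that the mixed Christoffel symbols of $\tl g$ at $y=0$ reproduce $\nb^\nu$, and the first $y$-derivatives of $g_{ij}$ are set, via $-\tfrac12 g^{\a\b}\partial_\b g_{ij}|_{y=0}=(B_0)^\a_{ij}$, to realize the prescribed fundamental form. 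Since $B_0$ is trace-free in $(i,j)$, the mean curvature vanishes and the Levi-Civita connection $\nb^{\tl g}$ is automatically adapted to $N\subset M$.

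With this $1$-jet fixed, the defining expressions \eqref{mix} and \eqref{rels} simplify (using $H^{\nb^{\tl g}}=0$ and $B^\perp=B_0$) to
\[
\mu_X(\xi)=h^{\tl g}(X,\xi)+\tfrac{1}{n-1}(\delta^{\nb^{\tl g}}B_0)(X)(\xi),\qquad \rho(X,Y)=h^{\tl g}(X,Y)-h^N(X,Y),
\]
where $h^N$ is already determined (by $g$ when $n\ge3$, and by the given data $h$ when $n\le2$). What remains is to prescribe the relevant components of $h^{\tl g}|_N$ via the second jet of $\tl g$. The Gauss equation expresses $h^{\tl g}(X,Y)-h^N(X,Y)$ in terms of $\partial^2_{\a\b}g_{ij}|_{y=0}$ modulo lower-order terms already fixed, and this normal Hessian is a genuinely free parameter realizing an arbitrary symmetric $\rho$. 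The Codazzi equation, traced appropriately, expresses $h^{\tl g}(X,\xi)+\tfrac{1}{n-1}(\delta B_0)(X)(\xi)$ in terms of $\partial^2 g_{i\a}|_{y=0}$: in codimension one, this combination is forced to vanish, reproducing Proposition~\ref{mu0} and matching the hypothesis $\mu=0$; in codimension $\ge 2$, or in the corner case $\dim N=\rank\,\nu=1$ (where the freely chosen M\"obius structure on $M=\nu$ provides the additional parameter in $h^{\tl g}(X,\xi)$), the normal Hessian of $g_{i\a}$ gives enough free parameters to realize any prescribed $\mu$.

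The main obstacle is this last linear-algebra surjectivity: one must check that, modulo the already chosen $1$-jet, the map sending the normal $2$-jet of $\tl g$ to the pair $(\rho,\mu)$ is indeed onto under the stated rank hypotheses, with the codimension-one obstruction matching exactly Proposition~\ref{mu0}. Once that is done, I would glue the locally prescribed $2$-jet along $N$ into a globally defined smooth metric on $M$ by a partition of unity and verify by direct computation that the invariants are realized. For $\dim N\le 2$ a final compatibility check is required: the induced M\"obius/Laplace structure on $N$ coming from $\tl g$ must coincide with the one given by $h$, which by Propositions~\ref{moeb-h} and \ref{laplace} is a condition on the trace-free symmetric part (resp.\ trace) of the tangential normal Hessian, and this can be absorbed into the same second-jet parameters used for $\rho$ without losing the freedom to prescribe $\rho$ itself.
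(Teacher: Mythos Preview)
Your strategy is essentially the same as the paper's: build $\tl g$ from its $2$-jet along the zero section, with the $0$-jet giving $(g,g^\nu)$, the $1$-jet giving $B_0$ and $\nb^\nu$, and the $2$-jet controlling $(\mu,\rho)$ via the Ricci tensor of $\tl g$ on $N$. The paper carries this out with a concrete ansatz rather than in general coordinates: it writes
\[
\tl g(\tl X,\tl Y)_\xi=g(X,Y)-2g^\nu(B_0(X,Y),\xi)+b(X,Y)\|\xi\|^2,\quad
\tl g(\tl X,\tl\zeta)_\xi=\|\xi\|^2 a(X,\zeta),\quad
\tl g(\tl\zeta,\tl\eta)_\xi=g^\nu(\zeta,\eta)(1+f\|\xi\|^2),
\]
with free tensors $a\in T^*N\ot\nu^*$, $b\in S^2N$, $f\in C^\infty(N)$, and then computes $\wtl\ric$ along $N$ explicitly. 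The upshot is that $a$ enters the tangential--normal Ricci with coefficient $(m-n-1)$, so it parametrizes $\mu$ precisely when $\mathrm{codim}\,N\ge 2$, while $b$ and $f$ parametrize the tangential--tangential Ricci and the scalar curvature, hence $\rho$. This turns your ``main obstacle'' (the $2$-jet surjectivity) into three one-line affine inversions, and the codimension-one degeneration is visible immediately. Your coordinate approach would reach the same conclusion, but you would have to sort through many more second-derivative components than the paper's ansatz uses.

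One correction to your final paragraph: there is \emph{no} compatibility check to be made between the induced M\"obius/Laplace structure on $N$ and the given tensor $h$. The tensor $h^N$ that appears in the definition \eqref{rels} of $\rho$ is the \emph{given} one, not the induced one; by \eqref{mcomp} their difference is exactly $\rho_0$, which is part of the data you are prescribing. So the low-dimensional cases require no extra adjustment of the tangential normal Hessian --- the paper simply computes $\rho$ from \eqref{rels} using the given $h^N$ and solves for $b$ (or, when $m=2$, for the M\"obius structure on $M$).
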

\begin{proof} It is enough to construct a metric $\tl g$ on a
  neighborhood of the zero section of the total
  space $M$ of the vector bundle $\nu$. We introduce the following
  tensors
$$a\in \ci(T^*M\ot \nu^*),\ b\in\ci(S^2M), \ f\in\ci(M)
$$
and we introduce the following notation: for each vector field $X$ on
$M$, we denote by $\tl X$ the horizontal (for the connection
$\nb^\nu$) lift on $M$ of $X$. On the other hand, for each local section
$\zeta$ in $\nu$, we denote by $\tl \zeta$ the vertical (local) vector
field on $M$ defined by $\zeta$.

We have then, for each point $\xi\in M$, the following relations:
\be\label{brack}[\tl X,\tl Y]_\xi=\wtl{[X,Y]}_\xi-\wtl{R^\nu_{X,Y}\xi},\ [\tl
X,\tl\zeta]_\xi=\wtl{\nb^\nu_X\zeta},\ [\tl\zeta,\tl\eta]_\xi=0,\ee
for each vector fields $X,Y$ on $M$ and sections $\zeta,\eta$ in $\nu$. 

We define the following symmetric bilinear form $\tl g$ on $M$: in a
point $\xi\in M$ we define 
\beay\label{gtl}\tl g(\tl X,\tl Y)_\xi&:=&g(X,Y)
-2g^\nu(B_0(X,Y),\xi)+b(X,Y)\|\xi\|^2,\nonumber\\
\tl g(\tl X,\tl \zeta)_\xi&:=& \|\xi\|^2a(X,\zeta),\\
\tl g(\tl \zeta,\tl \eta)_\xi&:=& g^\nu(\zeta,\eta) (1+f\|\xi\|^2)
.
\nonumber\eeay
Here, $\|\xi\|^2:=g^\nu(\xi,\xi)$.
It is clear that $\tl g_\xi$ is positive definite for $\xi$ in a
neighborhood of the zero section of $\nu$.

We can compute the curvature of the Levi-Civita connection $\tl\nb$ of $\tl g$
on the points of $N\subset M$. For this, we first compute the
covariant derivatives of vector fields of type $\tl X,\tl\zeta$ (as
defined above) up to order 1 in the vertical (i.e., in the direction
of the fibers of $\nu$) directions:
\beay\label{a1}\tl\nb_{\tl X}\tl Y&=&\wtl{\nb_XY}+B_0(X,Y)-b(X,Y)\xi
-\frac12 R^\nu_{X,Y}\xi \nonumber\\
&&
-\nb_XB_0(Y,\cdot)(\xi)-\nb_YB_0(X,\cdot)(\xi)+\nb_\cdot B_0(X,Y)(\xi)
+\mathcal{O}(\|\xi\|^2),\nonumber \\
\tl\nb_{\tl\zeta}\tl X&=&-B_0(X,\cdot)(\zeta)+b(X,\cdot)g^\nu(\zeta,\xi)
+\frac12 h(R^\nu_{X\cdot}\xi,\zeta)\nonumber\\
&&+a(X,\cdot)g^\nu(\zeta,\xi)-
a(X,\zeta)\xi+\mathcal{O}(\|\xi\|^2),\\
\tl\nb_{\tl X}\tl\zeta&=&-B_0(X,\cdot)(\zeta)+b(X,\cdot)g^\nu(\zeta,\xi)
+\frac12 g^\nu(R^\nu_{X\cdot}\xi,\zeta)\nonumber\\
&&+a(X,\cdot)g^\nu(\zeta,\xi)-
a(X,\zeta)\xi+\nb^\nu_X\zeta+\mathcal{O}(\|\xi\|^2),\nonumber \\
\tl\nb_{\tl\zeta}\tl \eta&=& 
 (g^\nu(\xi,\zeta)\eta+g^\nu(\xi,\eta)\zeta-g^\nu(\zeta,\eta)\xi) f
 +
a(\cdot,\eta)g^\nu(\xi,\zeta)+a(\cdot,\zeta)g^\nu(\xi,\eta)
+\mathcal{O}(\|\xi\|^2).\nonumber \eeay
We compute then the curvature of $\tl\nb$ on $N$ and the Ricci tensor
and the scalar curvature is then, for $x\in N\subset M$:
\beay\label{rictl}
\wtl{\ric}(\tl X)_x&=&\wtl{\ric(X)}-(m-n)b(X,\cdot)-\delta^\nb
B_0(X)-(m-n-1)a(X,\cdot)\nonumber \\
\wtl{\ric}(\tl\zeta)_x&=&-\delta^\nb B_0(\cdot)(\zeta)-
(m-n-1)a(\cdot,\zeta)-\tr (b)\zeta -2(m-n-1)f\zeta.
\\
\wtl\scal&=& \scal -2(m-n)\tr (b) -2(m-n-1)(m-n)f
.\nonumber\eeay
If $m-n-1>0$, the map associating the mixed Schouten-Weyl tensor $\mu$
of the embedding $N\subset M$ to a tensor $a$ is an invertible affine map.

If $m\ge 2$, the map associating the trace of the relative
Schouten-Weyl tensor $\rho$ to $f$ is also an invertible affine map.

Once $a,f$ and the trace part of $b$ are fixed, if $m\ge 3$, the map
associating the trace-free relative Schouten-Weyl tensor $\rho_0$ to
$b_0$ (the trace-free part of $b$) is also an invertible affine map. 

In conclusion, for any given tensors $\mu$ and $\rho$ as above, by
choosing $a,b$ and $f$, the mixed and relative Schouten-Weyl tensors of
$N\subset (M,[\tl g])$ are precisely $\mu$, resp. $\rho$.

The same conclusion holds in the special case $m=2$ (when $M$ is a
M\"obius surface) and $n=1$ ($N$ is a Laplace curve), but in this case
$\tl g$ can be arbitrarily chosen; the mixed Schouten-Weyl tensor $\mu$ and the relative Schouten-Weyl
tensor $\rho$ determine (and are determined by) a choice of a M\"obius
structure on $M$, or, equivalently, by a choice of a trace-free symmetric
Schouten-Weyl tensor $\tl h_0$. Therefore, any given tensors
$\mu$ and $\rho$ can be realized as the corresponding invariants of an
embedding of the Laplace curve $N$ in some M\"obius surface $M$.
% depends linearly on $\tr b$ (if $m-n-1>0$) and on $f$. Once $f$ and
% the trace part of $b$ chosen in order to obtain the given trace part
% of $\rho$, the trace-free part of $\rho$ depends linearly on the
% trace-free part of $b$.   
% $$\tr b:=$$
%  and for an apropriate choice of the tensors
% $a$ and $b$, the prescribed values of $\mu$ and $\rho$ can be
% obtained. (If $n=2$, the trace-free Schouten-Weyl tensor of $N$, given by the
% M\"obius structure of $N$, will have an 
\end{proof}

% , the
% conformal geodesic equation \eqref{cg} on some of the (totally
% umbilical) leaves $M_1$ involves the Schouten tensor $h_1$ of $M_1$,
% which is generically not the restriction to $TM_1$ of the Schouten
% tensor $h$ of the product (unless the normalized scalar curvatures of
% $M_1$ and $M_2$ satisfy
% $$\frac{\sigma_1}{m_1}=\frac{\sigma_2}{m_2},$$
% where $m_i:=\dim M_1,$ $i=1,2$). Therefore, this equation, depending on the
% conformal structure alone, yields {\em different} curves when
% considered on $M_1$ or on $M_1\times M_2$, which suggest the existence
% of another invariant of the conformal embedding $M_1\subset M_1\times
% M_2$ than the traceless fundamental for $B_0$ (which is identical zero
% in this example). More precisely, we have:

\subsection{Induced M\"obius and Laplace structures on submanifolds}

\subsubsection{Induced M\"obius structure}
Let $N^n\subset M^m$ be a submanifold in the conformal manifold $(M,c)$ with 
$m>n\ge 2$. We want to construct an {\em induced} M\"obius structure
on $N$ (see also \cite{bc}):% will show that the canonical M\"obius
% structure of $M$ induces a M\"obius structure on $N$, which, in the
% case where $n\ge 3$, differs from the canonical M\"obius structure of
% $N$ exactly by the trace-free part of the relative Schouten-Weyl
% tensor $\rs$.  

Take any Weyl structure $\nb$ on $(M,c)$. We try to mimic the
definition of the canonical M\"obius structure by considering the
Hessian of $\nb$ acting on vectors of $TN$ and on sections of
$L_N$. Since the usual Hessian possibly involves a covariant derivative in
normal directions, we need to modify it by suppressing this
derivative. More precisely,
we define the ``horizontal'' Hessian acting on sections of $L_N$ by
\beq\label{hsb}\overline\hs^\nb(X,Y)=\nb_X\nb_Y-\nb_{(\nb_XY)^N}=
\hs^\nb(X,Y)+\nb_{B(X,Y)},\qquad\forall X,Y\in TN.\eeq 
We define then the second order operator $P:C^\infty(L_N)\to C^\infty(T^*N\otimes
T^*N\otimes L_N)$ by
$$P(X,Y)l=\overline\hs^\nb(X,Y)l+h^\nb(X,Y)l,\qquad\forall X,Y\in TN,\ l\in
C^\infty(L_N).$$

We now study the way $P$ changes under a change of Weyl structure. 
If we replace $\nb$ by $\nb'=\nb+\tilde\theta$, the formulas for the
change of the Schouten-Weyl tensor \eqref{scht} and of the Hessian
\eqref{hess}, together with \eqref{B} yield 
\bea P'(X,Y)l-P(X,Y)l&=&c(X,Y)(\nb_\theta l+\frac12c(\theta,\theta)l)+\nb'_{B'(X,Y)}l-\nb_{B(X,Y)}l\\
&=&c(X,Y)(\nb_{\theta^N}
l+\frac12c(\theta,\theta)l%-\nb_{\theta^\perp}l
-c(\theta,\theta^\perp)l)+\theta(B(X,Y))l.
\eea
% Note that in order to compute the first two terms in the bracket separately we
% need to extend $l$ 
% to a section of $L$ on $M$, but the result of the sum is clearly independent of
% the extension.

Let $P_0$ denote the trace-free part of $P$: $P_0(X,Y)l=P(X,Y)l-\frac1nc(X,Y)\tr_{TN}(Pl)$. The previous relation yields
\beq\label{a1}P'_0(X,Y)l-P_0(X,Y)l=\theta^\perp(B_0(X,Y))l.\eeq
We conclude:
\begin{prop}\label{mind} Let $N^n\subset (M^m,c)$ be conformal
  submanifold, with $m>n\ge 2$. 
The second order operator
$$\M^{ind}:C^\infty(L_N)\to C^\infty(T^*N\otimes T^*N\otimes L_N)$$
defined by
\be\label{emind}\M^{ind}(X,Y)l=P_0(X,Y)l+H^\nb(B_0(X,Y))l,\ee
or, equivalently, by
\beq\label{2mind}\M^{ind}_{X,Y}l:=\left(\hs^\nb_{0,N}(X,Y)l+\nb_{B_0(X,Y)}l\right)+h^\nb_{0,N}(X,
Y)+H^\nb(B_0(X,Y))l\eeq
(here the double subscript $0,N$ means the trace-free part along $TN$)
does not depend on the 
choice of the Weyl structure $\nb$, and defines a M\"obius structure
on $N$, called the {\em induced M\"obius structure} on $N$.

Moreover, if $n\ge 3$ or $n=2$ and $N$ has a M\"obius
  structure $\M^N$ compatible with $c$, we have
\be\label{mcomp} \M^N-\M^{ind}=\rs_0,\ee
where $\rs$ is the relative Schouten-Weyl tensor of the embedding
\eqref{rels}, and $\rs_0$ its trace-free part.\end{prop}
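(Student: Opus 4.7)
The plan is to verify the proposition in three steps: independence of $\M^{ind}$ from $\nb$, the M\"obius-structure property, and the comparison \eqref{mcomp}. The first step follows directly from the transformation rules already recorded. Under $\nb \mapsto \nb' = \nb + \tilde\theta$, the computation just preceding the statement yields $P'_0(X,Y)l - P_0(X,Y)l = \theta^\perp(B_0(X,Y))l$, while equation \eqref{mc} gives $H^{\nb'} - H^\nb = -\theta^\perp$. Thus the zero-order correction $H^\nb(B_0(X,Y))l$ changes by $-\theta^\perp(B_0(X,Y))l$, exactly cancelling the change in $P_0$; hence the sum defining $\M^{ind}$ depends only on $c$ and on the embedding $N \subset M$.

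For the second step, I would verify that $\M^{ind} - \hs^{\nb^N}_0$ is a scalar operator for the Weyl structure $\nb^N$ induced on $N$, which is precisely the defining property of a M\"obius structure (Definition \ref{mob}). The key observation is that for $X,Y \in TN$ and $l \in C^\infty(L_N)$ the horizontal Hessian $\overline{\hs}^\nb(X,Y)l$ coincides with the intrinsic Hessian $\hs^{\nb^N}(X,Y)l$: tangential covariant derivatives preserve sections of $L_N$, and $\nb^N_X Y = (\nb_X Y)^N$ by definition. Substituting this into \eqref{2mind} yields
\[
\M^{ind}(X,Y)l - \hs^{\nb^N}_0(X,Y)l \;=\; h^\nb_{0,N}(X,Y)l + H^\nb(B_0(X,Y))l,
\]
which is plainly zero-order; one further checks that the antisymmetric (Faraday) components of $\hs^\nb_{0,N}$ and of $h^\nb_{0,N}$ cancel, so that $\M^{ind}$ actually lands in $S^2_0 T^*N \otimes L_N$.

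For the third step, I would pick an adapted Weyl structure $\nb$ on $M$, so that $H^\nb = 0$ and $B = B_0$; then \eqref{rels} reduces to the symmetric restriction of $h^M - h^N$ to $TN$, and step two simplifies to $\M^{ind} - \hs^{\nb^N}_0 = h^{\nb,s}_{0,N}$. Proposition \ref{moeb-h} applied to $\M^N$ identifies $\M^N - \hs^{\nb^N}_0$ with the intrinsic symmetric trace-free Schouten--Weyl tensor of $N$: namely $h^{\nb^N, s}_0$ in the canonical case $n \ge 3$, or the corresponding tensor provided by the given M\"obius structure for $n = 2$. Subtracting these two expressions and comparing with the trace-free part of \eqref{rels} then delivers \eqref{mcomp}.

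The main obstacle is step two: one has to reconcile the several trace-free conventions (along $TN$ versus along $TM$), separate the symmetric from the antisymmetric (Faraday) contributions, and make rigorous sense of the formal identity $\overline{\hs}^\nb = \hs^\nb + \nb_{B(X,Y)}$ for a section $l$ defined only on $N$ while $B(X,Y)$ is normal. Once this bookkeeping is in place, steps one and three are essentially direct substitutions of the previously derived transformation rules.
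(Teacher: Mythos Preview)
Your proposal is correct and follows the same route as the paper: both rest on the identity $\overline\hs^\nb = \hs^{\nb^N}$ together with the transformation rules \eqref{a1} and \eqref{mc}. Your choice to specialize to an adapted Weyl structure in step three is a harmless simplification (the paper keeps $\nb$ general, the extra $H^\nb$-terms being pure trace and hence invisible after passing to trace-free parts), but the argument is otherwise the same.
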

\begin{proof} The independence of $\M^{ind}$ of the Weyl structure
  $\nb$ follows directly from \eqref{a1} and \eqref{mc}.
 
We will now compute the difference between this induced M\"obius
structure and the M\"obius structure of $N$. Since the horizontal
Hessian $\overline\hs^\nb(X,Y)l$ appearing in the definition of $P$ is
just the Hessian of the induced Weyl structure on $N$ applied to $l$,
we get  
\begin{equation*}\begin{split}P(X,Y)l+&H^\nb(B(X,Y))l-(\hs^N(X,Y)l+h^N(X,Y)l)=\\&h^M(X,Y)l
-h^N(X,Y)l+H^\nb(B(X,Y))l.\end{split}\end{equation*}
The trace-free part of the left hand side equals $\M^{ind}l-\M^Nl$, whereas 
the trace-free part of the right hand side is just the trace-free part of $\rs$. This proves our claim.\end{proof}

\subsubsection{Induced Laplace structure} 
 Let $N^n\subset (M^m,c)$ be conformal
  submanifold, with $m>n\ge 1$, and if $m=2$, suppose $M$ has a
  M\"obius structure (see also \cite{bc}).

In order to define the induced Laplace structure on $N$, we take, by analogy with the definition in Proposition \ref{lapex},
the trace on $TN$ of the horizontal 
Hessian acting on sections of $L^k$ for $k=1-\frac n2$ and add $k$ times the trace over $TN$ of the 
Schouten tensor $h^\nb$ of $M$ with respect to $\nb$. The resulting
operator, say $D:C^\infty(L^k)\ra C^\infty(L^{k-2}$, defined by
$$D(l):=\tr_{TN}(\overline\hs^\nb l+k h^\nb(\cdot,\cdot) )l$$
is not independent on $\nb$. Indeed, for a new Weyl structure
$\nb'=\nb+\tilde\theta$, we get using \eqref{scht}, \eqref{hess} and \eqref{hsb}:
\bea D'(l)-D(l)&=&\tr_{TN}(\overline\hs^{\nb'} l-\overline\hs^\nb l+k h^{\nb'}(\cdot,\cdot) l-k h^\nb(\cdot,\cdot) l)\\
&=&n\nb'_{H^{\nb'}}l-n\nb_{H^\nb}l+2(k-1)\nb_{\theta^N}l+n\nb_{\theta}l\\&&+k[-\delta^N\theta+(k-2)c(\theta^N,\theta^N)+nc(\theta,\theta)]l
\\&&+k[\delta^N\theta+c(\theta^N,\theta^N)-\frac n2c(\theta,\theta)]l.
\eea
After simplifications (using \eqref{der}, \eqref{mc} and the choice $k=1-\frac n2$), we get
$$D'(l)-D(l)=knc(\theta,H^{\nb'})l+\frac{kn}2c(\theta^\perp,\theta^\perp)l.$$
On the other hand, by \eqref{mc} the expression $c(H^\nb,H^\nb)$ transforms according to the rule
$$c(H^{\nb'},H^{\nb'})-c(H^{\nb},H^{\nb})=-2c(\theta^\perp,H^{\nb'})-c(\theta^\perp,\theta^\perp).$$
% The two equations above show that the operator 
% $$\Lp^{ind}l:=\tr_{TN}(\overline\hs^\nb l+k h^\nb(\cdot,\cdot)
% l)+\frac{kn}2c(H^{\nb},H^{\nb})l$$ 
% is independent on $\nb$ and is called the
% {\em induced Laplace structure} on $N$.
 We have proved the following:

%  and suppose that
% $M$ is endowed with a M\"obius structure $\M$ and a Laplace structure $\Lb$ (if $m=\dim M\ge 3$
% we can consider the canonical M\"obius and Laplace structures, but for $m=2$
% a M\"obius operator $\M$, compatible with the conformal structure $c$, has to be given).
% 
% Then there is a way to construct on $N$ {\em induced} M\"obius, resp. Laplace 
% structures by the following formulas:

\begin{prop}\label{ind}  Let $N^n\subset (M^m,c)$ be conformal
  submanifold, with $m>n\ge 1$, and if $m=2$, suppose $M$ has a
  M\"obius structure. The following operator is well-defined, and is
  independent of the Weyl structure $\nb$:
$$\Lp^{ind}:\ci(L_N^{1-n/2}\ra\ci(L_N^{-1-n/2}),$$
\be\label{lind}\Lp^{ind}l:=\tr _{TN}\left(\overline\hs^\nb l\right)+ 
k \tr _{TN}h^\nb l + \frac{kn}{2}c(H^\nb,H^\nb)l.\ee 
It is called the
{\em induced Laplace structure} on $N$.
\end{prop}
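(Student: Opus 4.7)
The plan is to combine the computation of $D'(l)-D(l)$ displayed just before the statement with the transformation rule for $c(H^\nb,H^\nb)$, and verify that the extra correction term $\tfrac{kn}{2}c(H^\nb,H^\nb)l$ appearing in \eqref{lind} is exactly what is needed to cancel the discrepancy, leaving a residue that vanishes because $H^\nb\in\nu^*$ annihilates tangential $1$-forms.

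More precisely, writing $\Lp^{ind,\nb}l:=D(l)+\tfrac{kn}{2}c(H^\nb,H^\nb)l$, I would compute
\[
\Lp^{ind,\nb'}l-\Lp^{ind,\nb}l = \bigl[D'(l)-D(l)\bigr]+\tfrac{kn}{2}\bigl[c(H^{\nb'},H^{\nb'})-c(H^\nb,H^\nb)\bigr]l.
\]
Plugging in the two formulas established in the preceding paragraphs,
\[
D'(l)-D(l)=knc(\theta,H^{\nb'})l+\tfrac{kn}{2}c(\theta^\perp,\theta^\perp)l,
\]
and
\[
c(H^{\nb'},H^{\nb'})-c(H^\nb,H^\nb)=-2c(\theta^\perp,H^{\nb'})-c(\theta^\perp,\theta^\perp),
\]
the quadratic terms in $\theta^\perp$ cancel, and the remaining linear contribution is $knc(\theta-\theta^\perp,H^{\nb'})l=knc(\theta^N,H^{\nb'})l$. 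Since $H^{\nb'}$ is a section of $\nu^*$ and $\theta^N$ is tangential to $N$, this pairing is zero, so $\Lp^{ind,\nb'}l=\Lp^{ind,\nb}l$, proving independence of $\nb$.

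To conclude that $\Lp^{ind}$ is a Laplace structure in the sense of Definition \ref{lap}, I would observe that its principal symbol is the trace over $TN$ of the Hessian, i.e.\ the $c|_N$-trace of the symmetric tensor $X\otimes Y\mapsto XY$, which is the conformal Laplace symbol on the induced conformal structure on $N$, with the correct weight $k=1-n/2$. The zero-order and first-order corrections (from the terms $\nb_{B(X,Y)}l$, $k\tr_{TN}h^\nb$ and $\tfrac{kn}{2}c(H^\nb,H^\nb)$) are all scalar in $l$, so $\Lp^{ind}-\tr_{TN}\overline\hs^\nb$ differs from $\tr_{TN}\hs^\nb$ (on $L_N^{1-n/2}$) only by a scalar operator; by Proposition \ref{lapex} (and the definition of a Laplace structure), $\Lp^{ind}$ is therefore a Laplace structure on $N$.

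The only delicate point is a bookkeeping check that the various conformal weights match -- in particular that $c(H^\nb,H^\nb)\in L^{-2}$ acts correctly on $L^k_N$ to land in $L^{k-2}_N$, and that the sign conventions in \eqref{mc} (which supply the $-2c(\theta^\perp,H^{\nb'})$ cross-term) give the precise coefficient needed to absorb $knc(\theta,H^{\nb'})l$ modulo the tangential part of $\theta$. Apart from that, the argument is a routine consequence of the computations already assembled before the statement.
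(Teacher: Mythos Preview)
Your proposal is correct and follows exactly the paper's approach: the computations of $D'(l)-D(l)$ and of $c(H^{\nb'},H^{\nb'})-c(H^\nb,H^\nb)$ are precisely what the paper establishes in the paragraphs preceding the proposition (which then concludes with ``We have proved the following''), and your explicit combination of the two---reducing the discrepancy to $knc(\theta^N,H^{\nb'})l=0$---is the straightforward step the paper leaves implicit. Your closing remark that $\overline\hs^\nb$ coincides with the Hessian of the induced Weyl structure $\nb^N$ on $N$, so that $\Lp^{ind}-\tr_{TN}\hs^{\nb^N}$ is scalar, is the right justification that $\Lp^{ind}$ is a Laplace structure in the sense of Definition~\ref{lap}.
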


There is a geometric interpretation of the induced M\"obius operator: Suppose 
$l$ is a section in $L$ over the submanifold $M$, that does not vanish. 
The square of $l$ is the a metric $g^N$ on $N$ in the conformal class $c$. 
It is actually a metric on $M$ as well, but it is only defined {\em along N}.
We can, however, extend it to a metric $g^M$ on (a neighborhood of $N$ in) $M$, 
and we call such an extension {\em minimal} if $(N,g^N)$ is a minimal
submanifold of $(M,g^M)$. We have then a corresponding {\em minimal} extension 
$l^{min}$ of $l$ as a section of $L$ over $M$ and it holds:

\begin{prop}\label{Mob-geom} For a minimal extension of a non-vanishing 
section $l$ of $L$ over $N$ to a section $l^{min}$ of $L$ over $M$, we have
$$\M^{ind} l=\M^M l^{min}.$$
\end{prop}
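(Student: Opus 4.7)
The strategy is to pick a particularly convenient Weyl structure to compute both sides. Take $\nabla:=\nabla^{g^M}$, the Levi-Civita connection of $g^M=(l^{min})^{-2}c$. This choice has two crucial features: first, $l^{min}$ is $\nabla$-parallel, so $\nabla l^{min}=0$; second, since $(N,g^N)$ is minimal in $(M,g^M)$, the mean curvature vanishes, $H^\nabla=0$, so $\nabla$ is adapted to the embedding.

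With this choice, I would first compute $\M^M l^{min}$. Proposition \ref{mobex} gives
$$\M^M(X,Y)l^{min}=\mathrm{Hess}^\nabla_0(X,Y)l^{min}+h^{\nabla,s}_0(X,Y)l^{min},$$
and since $l^{min}$ is parallel the Hessian term vanishes, leaving $\M^M l^{min}=h^{\nabla,s}_0\cdot l^{min}$. Next I would compute $\M^{ind}l$ using the same $\nabla$, via \eqref{emind}: $\M^{ind}(X,Y)l=P_0(X,Y)l+H^\nabla(B_0(X,Y))l$. The term involving $H^\nabla$ drops out. For the $P$ term, note that the horizontal Hessian $\overline{\mathrm{Hess}}^\nabla(X,Y)l$ defined by \eqref{hsb} only uses the tangential derivatives $\nabla_X$, $\nabla_Y$, $\nabla_{(\nabla_X Y)^N}$ applied to the section $l=l^{min}|_N$; because $l^{min}$ is $\nabla$-parallel on $M$, all these derivatives vanish at points of $N$. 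Hence $P(X,Y)l=h^\nabla(X,Y)l$ and $P_0(X,Y)l=h^\nabla_{0,N}(X,Y)l$, the trace-free part over $TN$.

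The remaining task is to match $h^\nabla_{0,N}\cdot l$ with $h^{\nabla,s}_0\cdot l^{min}$ evaluated on tangential arguments. Since $\nabla$ is Levi-Civita, the Faraday form $F^\nabla$ vanishes, so $h^\nabla=h^{\nabla,s}$ is already symmetric. The two tensors
$$h^{\nabla,s}_0\big|_{TN\otimes TN}=h^{\nabla,s}|_{TN\otimes TN}-\tfrac{1}{m}\mathrm{tr}_{c}\,h^{\nabla,s}\cdot c|_{TN},\qquad h^\nabla_{0,N}=h^{\nabla,s}|_{TN\otimes TN}-\tfrac{1}{n}\mathrm{tr}_{TN}\,h^{\nabla,s}\cdot c|_{TN}$$
differ only by a multiple of $c|_{TN}$, i.e.\ by a pure-trace term over $TN$. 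Since both $\M^{ind}l$ and (the relevant component of) $\M^M l^{min}$ live naturally in $S^2_0N\otimes L_N$, this pure-trace discrepancy vanishes upon projection and the two sides agree.

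The main subtlety is purely bookkeeping: $\M^M l^{min}$ is a section of $S^2_0M\otimes L$ while $\M^{ind}l$ is a section of $S^2_0 N\otimes L_N$, and the identification in the statement implicitly restricts the former to $TN\otimes TN$ and takes the $TN$-trace-free part. Once the Levi-Civita choice simultaneously kills all the first-order terms (Hessian of $l^{min}$, $\nabla_{B_0}l$, and $H^\nabla$), the identity reduces to the algebraic comparison above, so the only real obstacle is ensuring the trace conventions are correctly tracked through \eqref{emind} and Proposition \ref{mobex}.
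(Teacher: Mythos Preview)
The paper states this proposition without proof, so there is no argument to compare against. Your approach is the natural one and is essentially complete: choosing $\nabla$ to be the Levi-Civita connection of the minimal extension metric simultaneously makes $l^{min}$ parallel (killing all Hessian-type terms on both sides) and makes $\nabla$ adapted (killing the $H^\nabla$ correction in \eqref{emind}), so both operators collapse to the zero-order term in $h^\nabla$ restricted to $TN$.

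Your identification of the remaining bookkeeping issue is also accurate. Since $\M^M l^{min}$ is $TM$-trace-free while $\M^{ind}l$ is $TN$-trace-free, the literal equality only makes sense after restricting $\M^M l^{min}$ to $TN\otimes TN$ and projecting onto $S^2_0N$; the difference between $h^{\nabla,s}_0|_{TN\otimes TN}$ and $h^\nabla_{0,N}$ is indeed a multiple of $c|_{TN}$, hence vanishes under that projection. This is the only reasonable reading of the statement, and with it your argument is correct. One small point worth making explicit: when you say the horizontal Hessian vanishes because $l^{min}$ is parallel, you are implicitly using that the derivatives $\nabla_X l$ for $X\in TN$ can be computed as $(\nabla_X l^{min})|_N$, which is legitimate since $l=l^{min}|_N$ and $X$ is tangent to $N$.
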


\section{Conformal geodesic submanifolds} In Riemannian geometry, an
embedding $N\subset (M,g)$ is 
{\em totally geodesic} if and only if one of the following equivalent
properties hold:
\bi\item The fundamental form $B$ of $N\subset M$ vanishes
\item For every $M$-geodesic $\gamma:I\ra M$ ($I\subset \R$ contains a neighborhood
  of $0$) that is defined by initial conditions
\be\label{init}\gamma(0)=x,\ \dg(0)=X,\ee
with $x\in N$ and $X\in T_xN$, there exists $\e>0$ such that
$\gamma(-\e,\e)$ is contained in $N$
\item All $N$-geodesics are also $M$-geodesics.\ei
In conformal geometry, these three properties give rise to different
notions: The first is the one of a {\em totally umbilical}
submanifold, that we repeat here:
\begin{defi} A submanifold $N^n\subset (M^m,c)$ in a conformal
  manifold is {\em totally umbilical} if and only if the trace-free
  fundamental form $B_0$ of the embedding vanishes identically.
\end{defi}
The second notion gives rise to the notion of a {\em weakly geodesic
  submanifold}:
\begin{defi} A submanifold $N^n\subset (M^m,c)$ in a conformal
  manifold with $\dim M\ge 3$ or a M\"obius surface is {\em weakly
    geodesic} if an only if, for every initial conditions $x\in N$,
  $X\in T_xN$ and $D\in \weyl_x^N$ for the conformal geodesic equation
  \eqref{cg} in $M$:
\be\label{cginit} \gamma:I\ra M,\ \gamma(0)=x,\ \dg(0)=X,\ \nb_x=D,\ee
where $I$ is an interval that contains a neighborhood of $0$, there
exists $\e>0$ such that
$\gamma(-\e,\e)$ is contained in $N$.\end{defi}
Finally, the strongest notion of a conformal geodesic submanifold is
\begin{defi} Let $N^n\subset (M^m,c)$ be a submanifold of dimension at least
  3, or $\dim N=2$ and $N$ has a compatible M\"obius structure, or
  $\dim N=1$ and $N$ has a compatible Laplace structure. Suppose $\dim
  M\ge 3$ or $\dim M=2$ and $M$ has a M\"obius surface compatible with
  $c$. $N$ is then {\em strongly
    geodesic} if an only if every $N$--conformal (or M\"obius, or
  Laplace) geodesic is also an $M$--conformal or M\"obius
  geodesic.\end{defi}
\begin{thrm}\label{implic} For a submanifold $N\subset M$, where $(M,c)$ is a
  conformal manifold of dimension $m\ge 3$ or a M\"obius surface, and
  $N$ has, if $\dim N=1$ or $2$, a Laplace, resp. M\"obius structure compatible with $c$,  the following implications hold:
\be\label{impl}\mbox{strongly geodesic}\Rightarrow \mbox{weakly
  geodesic}\Rightarrow \mbox{totally umbilical},\ee
and the converse implications hold if $(M,c)$ is conformally flat (for
$m\ge 3$), resp. $(M,c,\M)$ is M\"obius flat (for $m=2$). 
Moreover, these three properties of the embedding $n\subset M$ are
equivalent to the vanishing of the following invariant tensors:
\bi\item $N\subset M$ is totally umbilical $\iff B_0\equiv 0$
\item $N\subset M$ is weakly geodesic $\iff B_0\equiv 0$ and
  $\mu\equiv 0$
\item $N\subset M$ is strongly geodesic $\iff B_0\equiv 0$,
  $\mu\equiv 0$ and $\rho\equiv 0$.\ei \end{thrm}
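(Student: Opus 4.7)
I will establish the three equivalences in turn, deduce the chain of implications from them, and then handle the flat converses separately. The equivalence \emph{totally umbilical} $\iff B_0\equiv 0$ is the definition; the substance lies in the other two.

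For \emph{weakly geodesic} $\iff B_0,\mu\equiv 0$, the direction ``$\Rightarrow$'' is pointwise: given $M$-conformal geodesic initial data $(x,X,D)$ with $D\in\weyl_x^N$ (so $H^D(x)=0$), the geodesic $\g$ stays in $N$, forcing the normal part $B^D(X,X)=B_0(X,X)$ of $\nabla^D_{\dg}\dg$ to vanish at $t=0$ for all $X\in T_xN$, hence $B_0\equiv 0$. The normal component of $h^D(\dg)=0$, which by Proposition \ref{mixt} under $H^D=0$ and $B_0=0$ reduces to $\mu_X(\xi)=h^D(X,\xi)=0$, then yields $\mu\equiv 0$. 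For ``$\Leftarrow$'', fix an ambient metric $g^M$ adapted to $N$ (Proposition \ref{transcx}), so $B^{g^M}|_N=0$ once $B_0=0$. Using the Gau\ss--Weingarten decomposition of $\ddg^{g^M}=\ddg^{g^N}+B_0(\dg,\dg)$ and of $\dddg^{g^M}$, the conformal acceleration of a curve $\g\subset N$ splits into a tangential part $a^N(\g)+g(\dg,\dg)\rho(\dg)^\sharp$ and a normal part $g(\dg,\dg)\mu_{\dg}^\sharp$, after invoking the simplified forms of $\mu,\rho$ from Propositions \ref{mixt} and \ref{rel} under $B_0=H=0$. With $\mu=0$ the normal part vanishes identically; the tangential part is a well-posed third-order ODE on $N$. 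For initial data coming from $(x,X,D)$ with $D\in\weyl_x^N$, its unique solution $\g\subset N$ satisfies $a^M(\g)=0$ with the correct Cauchy data, so by uniqueness it coincides with the ambient $M$-conformal geodesic, which hence lies in $N$.

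The subtle direction is the characterization \emph{strongly geodesic} $\iff B_0,\mu,\rho\equiv 0$, because one cannot reduce to $B_0=0$ a priori by choosing an adapted ambient Weyl structure: such a structure simultaneously adapted to $N$ and to a generic $N$-conformal geodesic exists only when $B_0$ already vanishes. Instead, for every $N$-conformal geodesic $\g$ and the ambient metric $g^M$ adapted to $N$, one writes $a^M(\g)$ directly via Lemma \ref{accel}. At $t=0$ the normal component $a^M(\g)^\perp$ becomes an explicit expression in $B_0,\nabla B_0,\mu$ and the free second-order initial datum $Y:=\ddg^{g^N}(0)\in T_xN$, whose $Y$-linear piece equals $-3B_0(Y,X)+3\frac{g(X,Y)}{g(X,X)}B_0(X,X)$. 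Strongly geodesic forces this to vanish for every $Y$, so $B_0(\cdot,X)=g(\cdot,X)\l_X$ for some $\l_X\in\nu$; symmetry of $B_0$ makes $\l_X$ independent of $X$, and trace-freeness then gives $\l=0$, whence $B_0\equiv 0$ (the case $n=1$ is vacuous). With $B_0\equiv 0$ the residual normal part reduces to $g(\dg,\dg)\mu^\sharp_{\dg}$ and the tangential part to $g(\dg,\dg)\rho(\dg)^\sharp$, giving $\mu\equiv 0$ and $\rho\equiv 0$. The converse is direct: if $B_0=\mu=\rho=0$, lifting an adapted $\nabla^N$ along any $N$-conformal geodesic to the $N$-adapted $\nabla^M$ via $\phi^M_N$ makes $\nabla^M_{\dg}\dg=0$ and, by Propositions \ref{mixt} and \ref{rel}, $h^M(\dg)=0$ on both tangential and normal components, so $\g$ is an $M$-conformal geodesic. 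The implication \emph{strongly} $\Rightarrow$ \emph{weakly} is then immediate from the two characterizations.

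For the flat converses, I apply the Liouville-type theorem of conformal geometry: every totally umbilical submanifold of a conformally flat $(M^m,c)$, respectively of a M\"obius-flat surface, is locally M\"obius-equivalent to a standard round subsphere $S^n\subset S^m$. In that model the round ambient metric realizes $N$ as totally geodesic with $h^M|_{TN\otimes TN}=h^N$ and $h^M|_{TN\otimes\nu}=0$ by direct computation, so $\mu\equiv 0$ and $\rho\equiv 0$. Conformal invariance of the three tensors then transports the vanishing to any conformally flat ambient, yielding the converse implications. The main obstacle, as highlighted, is the extraction of $B_0\equiv 0$ from strongly geodesic: without that vanishing no ambient Weyl structure adapted to both $N$ and an arbitrary $N$-conformal geodesic is available, so the argument must proceed intrinsically via the conformal acceleration formula and the polarization of its $Y$-linear part.
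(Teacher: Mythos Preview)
Your proposal is correct, and for the weakly-geodesic characterization and the two ``$\Leftarrow$'' directions it is essentially a metric-based rephrasing of the paper's Weyl-structure argument: where you fix an adapted ambient metric $g^M$ and decompose $a^M(\gamma)$ explicitly, the paper lifts the adapted $\nabla^N$ along the curve to the $N$-adapted $\nabla^M$ via $\phi^M_N$ and reads off $h^{\nabla^M}(\dot\gamma)$ directly from the definitions of $\mu$ and $\rho$.

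The genuine difference is in the hard step \emph{strongly geodesic $\Rightarrow B_0\equiv 0$}. You exploit the \emph{$Y$-linear} part of the normal component of $a^M(\gamma)(0)$, namely $-3B_0(X,Y)+3\,g(X,Y)g(X,X)^{-1}B_0(X,X)$, and kill $B_0$ by polarizing in the free second-order datum $Y$ plus symmetry and trace-freeness. The paper instead works with the \emph{$Y$-independent} information: it introduces, for each direction $X$, the ambient Weyl structure $\nabla^X=\nabla+\tilde\theta^X$ adapted to the $N$-geodesic $\gamma^X$, identifies $\theta^X=B_0(X,X)/c(X,X)$, and computes $\mu(X)$ via $\nabla^X$ to obtain $\mu(X)=\nabla_XB_0(X,X)/c(X,X)+\frac1{n-1}\delta^\nabla B_0(X)$. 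Linearity of the left-hand side forces $\nabla_XB_0(X,X)=a(X)c(X,X)$ for some $1$-form $a$, and symmetrization plus trace gives $\mu$ proportional to $\delta^\nabla B_0$; since $\mu$ is Weyl-invariant while $\delta^\nabla B_0$ shifts by $(n-1)B_0(\cdot,\eta)$ under $D\mapsto D+\tilde\eta$ (equation \eqref{delt}), one concludes $B_0\equiv 0$. Your route is shorter and more elementary (no invariance argument, no analysis of $\nabla B_0$), while the paper's route yields as a by-product the explicit identity $\rho(X,X)c(X,X)=-\tfrac12\,c(B_0(X,X),B_0(X,X))$ valid before one knows $B_0=0$, which is of independent interest. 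For the flat converses, both you and the paper invoke the same Liouville-type fact about totally umbilical submanifolds of the model space; the paper states this just after Propositions \ref{mixt} and \ref{rel} rather than inside the proof.
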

\begin{proof} Let $N\subset M$ be weakly geodesic. Fix $x\in N$
  and fix  an adapted Weyl structure $\nb$ at $x$. Consider now all conformal
  geodesics that satisfy the initial conditions \eqref{cginit} for all
  $X\in T_xN$. These curves all lie in $N$, thus in particular we have 
$$\nb_XX=0, \ \forall X\in T_xN.$$
This means, in particular, as $x$ was chosen arbitrarily, that $N$ is
totally umbilical. Moreover, for each conformal geodesic satisfying
the initial conditions \eqref{cginit}, $h^\nb(X,\xi)=0$ for some local
extension of the Weyl structure $\nb$ and for any normal vector
$\xi\in\nu_x$. In particular $\mu_x=0$, which proves the claim (2).

Suppose now that $N$ is strongly geodesic. In particular, all the
conformal geodesics $\gamma^X$ satisfying the initial conditions \eqref{cginit},
for any $x\in N$ fixed and a fixed connection $D$ on $N$, are also conformal
geodesics in $M$. However, the initial conditions satisfied by
$\gamma^X$ as a conformal geodesic on $M$ are
\be\label{cgM}\gamma^X(0)=x,\ \dg^X(0)=X\mbox{ and
}\nb^{\gamma^X}_x=\nb^X\in\weyl^M(x),\ee
where we don't know yet what is the Weyl structure $\nb^X$, except
that it is adapted to the parametrized curve $\gamma$ in $M$, thus it
has to be an extension of the connection $D$ on $N$, with respect to
which $\gamma$ is geodesic in $N$. We will
simply relate it to the adapted extension $\nb$ of $D$ as
$$\nb^X=\nb+\tilde\te^X,\mbox{ where }\te^X\in \nu^*.$$
The mean curvature of $\nb^X$ at $x$ is then $-\te^X$ \eqref{mc}. On
the other hand, $(\nb^X_XX)_x=0$, because $\gamma^X$ is geodesic for
$\nb^X$, therefore
\be\label{tex}-\te^Xc(X,X)+B_0(X,X)=0.\ee
Now we compute the relative Schouten-Weyl tensor $\rs$ at $x$ using the
connection $\nb^X$: 
$$\rs_x=h^{\nb^X}_x|_{T_xN\x T_xN}-h^D_x+\frac12 c(\te^X,\te^X)c -\te^X(B_0).$$
Because $\gamma^X$ is a conformal geodesic at $x$ for both $D$ and
$\nb^x$, the Schouten-Weyl tensors of $D$ and $\nb^X$ vanish in the
direction of $X$: As this holds for all vectors $X\in T_xN$, this means that $h^D_x=0$, and
% $$h^{\nb^X}_x|_{T_xN\x T_xN}=h^\nb_x|_{T_xN\x T_xN}-(\nb_\cdot \te^X)^N-\frac12
% c(\te^X,\te^X)c$$
% vanishes on $X$, for all $X\in T_xN$.
% Note that the tangential part of $\nb\te^X$ is an expression in the
% fundamental form alone, therefore we get:
% \be\label{hX}h^{\nb^X}_x|_{T_xN\x T_xN}=h^\nb_x|_{T_xN\x T_xN}-
% \frac12 c(\te^X,\te^X)+\te^X(B_0),\ee
% so we obtain
% \be\label{rsXX}\rs_x=h^\nb_x|_{T_xN\x T_xN}+c(\te^X,\te^X)c \ee
\be\label{rsX}\rs(X)=\frac12 c(\te^X,\te^X)X-\te^X(B_0(X)), \ \forall
X\in T_xN,\ee
which, cf. \eqref{tex}, implies
\be\label{rhoxx}\rs(X,X)c(X,X)=-\frac12 c(B_0(X,X),B_0(X,X)),\ \forall
X\in T_xN.\ee

% We know that all conformal geodesics of $N$ satisfying the initial conditions
% $$\gamma^{X,\eta}(0)=x,\ \dg^{X,\eta}(0)=X,\ D_x=D^\eta:=D+\tl\eta,\ \forall X\in
% T_xN,\forall\eta\in T^*_xN,$$
% are conformal geodesics in $M$. The adapted extension of $D+\tl\eta$
% to $TM$ is then $\nb+\tl\eta$, where $\eta$ is here viewed as a
% $1$-from on $M$ at $x$, vanishing on $\nu_x$. Denote by $\te^{X,\eta}$
% the $1$-form normal to $N$ that defines the adapted connection of the
% conformal geodesic $\gamma^{X,\eta}$ at $x$. The relation \eqref{tex}
% also holds for $\te^{X,\eta}$, in particular $\te^{X,\eta}_x=\te^X_x$
% and, for all $Y\in T_xN$,
% $$D_Y^\eta\te^{X,\eta}_x=D_Y^\etaB_0(X,X)=D_YB_0(X,X)-2\eta(Y)B_0(X,X)
% -2B_0(X,Y)\eta(X)+2B_0(X,\eta)c(X,Y).$$
% But this is precisely the normal component of $\nb^{X,\eta}_X\te^{X,\eta}_x$

We compute now the mixed Schouten tensor using the connection $\nb^X$
(we can suppose $n>1$ since the $1$-dimensional case is trivial):
$$\mu(X)=h^{\nb^X}(X)|_\nu+\nb_X\te^X+\frac1{n-1}(\delta^\nb B_0)(X).$$
Of course, $h^{\nb^X}(X)|_\nu=0$, hence we obtain
\be\label{mux}\mu_x(X)=\nb_XB_0(X,X)/c(X,X)+\frac1{n-1}(\delta^\nb B_0)(X),\
\forall X\in T_xN\sm 0.\ee
Recall that the connection used to compute $\nb B_0$ and $\delta^\nb
B_0$ depends only on the restriction to $N$ of this connection
(Proposition \ref{cxnorm}). Because $\mu$ and $\delta^\nb B_0$ are
linear maps from $T_xN$ to $\nu_x^*$, the same must hold for the map
$$X\longmapsto \frac{\nb_XB_0(X,X)}{c(X,X)}, $$
i.e., there exists a tensor $a\in T^*M\ot \nu$ such that
\be\label{aa} \nb_XB_0(X,X)=a(X)c(X,X),\ \forall X\in TM.\ee
Now, the tensor $\nb B_0$ is a $3$-tensor with values in $\nu$, that
is symmetric and trace-free in the last 2 arguments. There is,
therefore, only one trace left, $\delta^\nb B_0$. The equation
\eqref{aa} shows that the symmetric part of this 3-tensor is equal to
the symmetric part of $a\ot c$, more precisely
$$\nb_X B_0(Y,Z)+\nb_YB_0(X,Z)+\nb_ZB_0(X,Y)=
a(X)c(Y,Z)+a(Y)c(X,Z)+a(Z)c(Y,X),$$
for all $X,Y,Z\in TN$. By taking the trace in $Z,Y$, we obtain
$$2\delta^\nb B_0(X)=(n+2)a(X),$$
Thus \eqref{mux} becomes
\be\label{muu}\mu=\frac{3n}{n-1}\delta^\nb B_0.\ee
We have started with one Weyl structure $D\in \weyl_x(N)$, but
the whole argument holds for any starting Weyl structure, thus the
relation \eqref{muu} must hold for $D^\eta:=D+\tl\eta$, for all
$\eta\in T^*_xN$. Equation \eqref{delt}, together with \eqref{muu}, implies then
$$3n B_0(\eta,X)=0,\ \forall X\in T_xN, \forall \eta\in T^*_xN,$$
thus $B_0\equiv 0$ and $N$ is totally umbilical. In particular,
\eqref{tex} implies that all the connections $\nb^X$ coincide with
$\nb$ at $x$ and thus, all the conformal geodesics of $N$ are
conformal geodesics for $M$ corresponding to adapted initial
conditions. In other words, $N\subset M$ is weakly geodesic.

From \eqref{muu} and \eqref{rhoxx} we also conclude that, for a strongly
geodesic submanifold $N\subset M$, $B_0$, $\mu$ and $\rho$ vanish
identically.

Conversely, if $B_0\equiv 0$ and $\mu\equiv 0$, we consider on $N$ the
following type of curves: an {\em adapted conformal geodesic} on $N$
is a smooth curve $\gamma:I\ra N$ such that, for the adapted Weyl
structure $D$ on it, $\bar h(\dg)=0$, where $\bar h:=h^\nb|_{TN\ot
  TN}$, where $\nb$ is the unique adapted extension to $M$ of the Weyl
structure $D$ (by Propsition \ref{cxnorm}, $\nb$ is also defined along
the curve $\gamma(I)$, like $D$). This third order ODE has solutions
on $N$, and because $B_0\equiv 0$ and $\mu\equiv 0$, the resulting
curves are conformal geodesics in $M$. $N$ is, thus, weakly geodesic.

If, additionally, $\rho\equiv 0$, the above construction yields
precisely the conformal geodesics of $N$, thus $N$ is strongly
geodesic in $M$, as claimed.
\end{proof}
One can ask whether the hypothesis of weak or strong geodesy can be
relaxed, for example:
\begin{defi} A submanifold $N\subset M$ for which, in each point, the conformal
geodesics of $N$ with the initial conditions \eqref{cginit} for all
$x\in N$, all $X\in T_xN$, for some $D\in\weyl_x(N)$, are
conformal geodesics in $M$, is called {\em pseudo-geodesic}.\end{defi}
It is easy to see that a pseudo-geodesic curve is automatically
strongly geodesic.

 From the proof of the above Thorem, it is
clear that a pseudo-geodesic submanifold $N$ is strongly geodesic in
$M$, provided $N$ is totally umbilical. The following example shows
that, in higher dimensions, there are non-umbilic submanifolds that
are pseudo-geodesic:
\begin{example} Let $N:=\R^2$ with the flat metric and the flat M\"obius
  structure, and consider $\nu $ the trivial vector bundle of rank $2$
  on $\R^2$. Let
  $B_0$ be defined as follows:
$$B_0(\d_x,\d_y)=V,\ B_0(\d_x,\d_x)=-B_0(\d_y,\d_y):=W,$$
for $V,W$ an orthonormal frame of $\nu$. Let $\nb^\nu$ be a unitary
connection on $\nu$ such that $V$ and $W$ are parallel. 
Define the tensor $\rho$ as follows:
$$\rho(\d_x,\d_x)=\rho(\d_y,\d_y):=-\frac12,\ \rho(\d_x,\d_y):=0.$$
Then, by
Theorem \ref{existt}, define a
metric $\tl g$ on $M\subset \R^4$ by the conditions $\mu\equiv 0$
and $\rho$ and $B_0$ as above. 

For the Levi-Civita connection $D$ on $N$ given by the flat euclidean
metric, the geodesics (and M\"obius geodesics as well)
are clearly the affine lines. Let $\nb$ be the adapted Weyl structure
on $M$ for $N\subset M$ (in fact, it is just the Levi-Civita
connection of $\tl g$, as given by Theorem \ref{existt}).

Consider now, for each $D$-parallel unitary
vector field $X^t:=\cos t \d_x+\sin t\d_y$, we consider the Weyl structure 
$$\nb^t:=\nb+\tl\te^t,\mbox{ where }\te^t:=\cos(2t) W+\sin(2t)V.$$
For future use, we compute 
\be\label{bxt}\langle X^t,X^s\rangle = \cos(t-s),\
B_0(X^t,X^s)=\te^{\frac{t+s}{2}},\ \langle\te^t,\te^s\rangle=\cos(2t-2s).\ee

For the connection $\nb^t$, we have 
$$\nb^t_{X^t} X^t=B_0(X^t,X^t)-\cos(2t) W-\sin(2t)V= 0,$$
thus $\nb^t$ is the connection adapted to the affine line generated by
$X^t$. In order to compute $h^t(X^t)$, for $h^t$ the Schouten-Weyl
tensor of $\nb^t$ on $M$, we use $\nb^t$, resp. $D$ in the defining
formulas for $\mu$ and $\rho$ and obtain
$$h^t(X^t,V)=h^t(X^t,W)=0,$$
and 
$$h^t(X^t,X^s)=\rho(X^t,X^s)-\frac12 \langle X^t,X^s\rangle+\langle
\te^t,B_0(X^t,X^s)\rangle=0.$$
 Therefore, the affine lines in the direction $X^t$ in $N$ are
 conformal geodesics in $M$, with adapted Weyl structure $\nb^t$. $N$
 is thus spanned by a large family of common conformal geodesics (for
 $N$ and $M$), however, $N$ is nowhere umbilic in $M$.
\end{example}
It is possible to construct such examples in higher dimensions as
well. On the other hand, it is clear from \eqref{rsX} that 
for $X,Y\in T_xN$, $X\perp Y$, we get
$$\te^X(B_0(X,Y))=\te^Y(B_0(X,Y)),$$
thus a 
pseudo-geodesic M\"obius surface in a 3-manifold is always totally
umbilic, thus strongly geodesic, so the example above realizes the
minimal dimensions $(m,n)$ for which a non-umbilic, pseudo-geodesic
submanifold of dimension $n$ is a $m$-dimensional conformal manifold
exists.

 It would be interesting to find for
which dimensions $(m,n)$ the pseudo-geodesic $n$-submanifolds of a
conformal $m$-manifold are automatically umbilical. 

\end{document}